\newcommand{\+}{\protect\nobreakdash-}
\newcommand{\<}{\protect\nobreakdash--}
\renewcommand{\:}{\colon}
\newcommand{\rarrow}{\longrightarrow}
\newcommand{\ot}{\otimes}
\newcommand{\lrarrow}{\mskip.5\thinmuskip\relbar\joinrel\relbar\joinrel
 \rightarrow\mskip.5\thinmuskip\relax}
\newcommand{\bu}{{\text{\smaller\smaller$\scriptstyle\bullet$}}}
\DeclareMathOperator{\Hom}{Hom}
\DeclareMathOperator{\Ext}{Ext}
\DeclareMathOperator{\Spec}{Spec}
\DeclareMathOperator{\pd}{pd}
\DeclareMathOperator{\coker}{coker}
\newcommand{\Modl}{{\operatorname{\mathsf{--Mod}}}}
\newcommand{\qcoh}{{\operatorname{\mathsf{--qcoh}}}} 
\newcommand{\Sets}{\mathsf{Sets}}
\newcommand{\Hot}{\mathsf{Hot}}
\newcommand{\Com}{\mathsf{Com}}
\newcommand{\Ac}{\mathsf{Ac}}
\newcommand{\Fil}{\mathsf{Fil}}
\newcommand{\Ab}{\mathsf{Ab}}
\newcommand{\fl}{\mathsf{fl}}
\newcommand{\hfl}{\mathsf{hfl}}
\renewcommand{\cot}{\mathsf{cot}}
\newcommand{\A}{\mathcal A}
\newcommand{\B}{\mathcal B}
\newcommand{\C}{\mathcal C}
\newcommand{\F}{\mathcal F}
\newcommand{\G}{\mathcal G}
\newcommand{\M}{\mathcal M}
\newcommand{\N}{\mathcal N}
\renewcommand{\O}{\mathcal O}
\renewcommand{\P}{\mathcal P}
\newcommand{\sA}{\mathsf A}
\newcommand{\sB}{\mathsf B} 
\newcommand{\sC}{\mathsf C}
\newcommand{\sD}{\mathsf D}
\newcommand{\sE}{\mathsf E}
\newcommand{\sK}{\mathsf K}
\newcommand{\sL}{\mathsf L}
\newcommand{\sP}{\mathsf P}
\newcommand{\sS}{\mathsf S}
\newcommand{\boZ}{\mathbb Z}
\newcommand{\Section}[1]{\bigskip\section{#1}\medskip}
\theoremstyle{plain}
\newtheorem{thm}{Theorem}[section]
\newtheorem{lem}[thm]{Lemma}
\newtheorem{prop}[thm]{Proposition}
\newtheorem{cor}[thm]{Corollary}
\theoremstyle{definition}
\newtheorem{rem}[thm]{Remark}
\begin{document}

\title{Flat quasi-coherent sheaves as directed colimits, \\
and quasi-coherent cotorsion periodicity}

\author{Leonid Positselski}

\address{Leonid Positselski, Institute of Mathematics, Czech Academy
of Sciences \\ \v Zitn\'a~25, 115~67 Praha~1 \\ Czech Republic}

\email{positselski@math.cas.cz}

\author{Jan \v S\v tov\'\i\v cek}

\address{Jan {\v S}{\v{t}}ov{\'{\i}}{\v{c}}ek, Charles University,
Faculty of Mathematics and Physics, Department of Algebra,
Sokolovsk\'a 83, 186 75 Praha, Czech Republic}

\email{stovicek@karlin.mff.cuni.cz}

\begin{abstract}
 We show that every flat quasi-coherent sheaf on a quasi-compact
quasi-separated scheme is a directed colimit of locally countably
presentable flat quasi-coherent sheaves.
 More generally, the same assertion holds for any countably
quasi-compact, countably quasi-separated scheme.
 Moreover, for three categories of complexes of flat quasi-coherent
sheaves, we show that all complexes in the category can be obtained as
directed colimits of complexes of locally countably presentable flat
quasi-coherent sheaves from the same category.
 In particular, on a quasi-compact semi-separated scheme, every
flat quasi-coherent sheaf is a directed colimit of flat quasi-coherent
sheaves of finite projective dimension.
 In the second part of the paper, we discuss cotorsion periodicity in
category-theoretic context, generalizing an argument of Bazzoni,
Cort\'es-Izurdiaga, and Estrada.
 As the main application, we deduce the assertion that any
cotorsion-periodic quasi-coherent sheaf on a quasi-compact
semi-separated scheme is cotorsion.
\end{abstract}

\maketitle

\tableofcontents

\section{Introduction}
\medskip

 A classical theorem of Govorov and Lazard~\cite{Gov,Laz} tells that
any flat module over an associative ring is a directed colimit of 
finitely generated free modules.
 Several attempts have been made in the literature to obtain
an analogue of this result for quasi-coherent sheaves on
schemes~\cite[Theorem~5.4]{CB}, \cite{EGO}, but the theory of flat and
projective quasi-coherent sheaves is more complicated than for modules.

 First of all, on a nonaffine scheme $X$, there are usually \emph{no}
nonzero projective objects in the abelian category of quasi-coherent
sheaves $X\qcoh$.
 Instead of projective objects, one might want to consider
the \emph{locally projective} quasi-coherent sheaves, otherwise known
as (finite- or infinite-dimensional) \emph{vector bundles} over~$X$.
 A celebrated theorem of Raynaud and Gruson~\cite[\S\,II.3.1]{RG},
\cite{Pe} tells that projectivity of modules over commutative rings
is indeed a local property; so the notion of a locally projective
quasi-coherent sheaf on $X$ is well-defined.

 A Noetherian scheme with enough (finite-dimensional) vector bundles
is said to have the \emph{resolution property}.
 Noetherian schemes (and stacks) having the resolution property were
characterized by several equivalent conditions in the paper~\cite{To}.
 In particular, \cite[Proposition~1.3]{To} tells that any Noetherian
scheme with the resolution property is semi-separated.
 It seems to be an open problem whether there exists a semi-separated
Noetherian scheme (or even a separated scheme of finite type over
a field) \emph{not} having the resolution property.

 On the other hand, it is known~\cite[Section~2.4]{M-n},
\cite[Lemma~A.1]{EP} that there are enough flat quasi-coherent sheaves
on any quasi-compact semi-separated scheme.
 Conversely, any quasi-compact quasi-separated scheme with enough flat
quasi-coherent sheaves is semi-separated~\cite[Theorem~2.2]{SS}.
 Moreover, over a quasi-compact semi-separated scheme $X$, there are
enough \emph{very flat} quasi-coherent sheaves; so any quasi-coherent
sheaf is a quotient of a very flat one~\cite[Lemma~4.1.1]{Pcosh}.
 Very flat quasi-coherent sheaves form a special class of flat
quasi-coherent sheaves locally of projective dimension at most~$1$.
 Given these results, the question about extending the Govorov--Lazard
characterization of flat modules to flat quasi-coherent sheaves on
quasi-compact semi-separated schemes naturally arises.

 Let $X$ be a quasi-compact semi-separated scheme.
 Assuming that $X$ has enough locally countably generated vector
bundles, Estrada, Guil Asensio, and Odaba\c si proved in~\cite{EGO}
that every flat quasi-coherent sheaf on $X$ can be presented as
a directed colimit of locally countably generated flat quasi-coherent
sheaves locally of projective dimension at most~$1$
\,\cite[Theorem~B or Theorem~4.9]{EGO}.

 The aim of this paper is to remove the ``enough locally countably
generated vector bundles'' assumption in the latter result, and
relax the quasi-compactness and semi-separatedness assumptions.
 We prove that, on any quasi-compact quasi-separated scheme $X$,
any flat quasi-coherent sheaf is a directed colimit of locally
countably presentable flat quasi-coherent sheaves.
 It is well-known that any countably presentable flat module over
a ring has projective dimension at most~$1$; in fact, any countably
presentable flat module is a directed colimit of a countable directed
diagram of finitely generated free modules~\cite[Corollary~2.23]{GT}.
 Thus our theorem is indeed a generalization of the one of
Estrada, Guil Asensio, and Odaba\c si.

 More generally, we say that a scheme $X$ is \emph{countably
quasi-compact} and \emph{countably quasi-separated} if $X$ can be
covered by at most countably many affine open subschemes, and
the intersection of any two affine open subschemes in $X$ can be
so covered as well.
 For such schemes, we also prove that any flat quasi-coherent sheaf
is a directed colimit of locally countably presentable flat
quasi-coherent sheaves.
 It follows from our result that any morphism from a locally countably
presentable quasi-coherent sheaf on $X$ to a flat quasi-coherent sheaf
factorizes through a locally countably presentable flat quasi-coherent
sheaf.
 Thus ``a~locally countably presentable version of the Govorov--Lazard
theory of flatness'' holds over countably quasi-compact,
countably semi-separated schemes.

 We also briefly turn our attention to arbitrary (not necessarily
flat) quasi-coherent sheaves.
 Our methods allow one to prove that the abelian category $X\qcoh$ is
locally $\kappa$\+presentable for any regular cardinal~$\kappa$ and any
$\kappa$\+quasi-compact, $\kappa$\+quasi-separated scheme~$X$.
 The $\kappa$\+presentable objects of $X\qcoh$ are precisely all
the locally $\kappa$\+presentable quasi-coherent sheaves on $X$ in
this case.
 This provides a stronger version of a well-known result of
Gabber~\cite[Lemma Tag~077N]{SP}, which established local
$\kappa$\+generatedness of $X\qcoh$, as well as an extension of
a theorem of Grothendieck~\cite[Corollaire~I.6.9.12]{EGA1} from
the case of $\kappa=\aleph_0$ to that of an arbitrary regular
cardinal~$\kappa$.

 Moreover, we discuss three categories of complexes of flat
quasi-coherent sheaves on a scheme~$X$: arbitrary complexes of flats,
acyclic complexes of flats with flat sheaves of cocycles, and
homotopy flat complexes of flats (otherwise known as ``semiflat
complexes'').
 In all the three cases, we show that all complexes from the respective
category are directed colimits of complexes of locally countably
presentable sheaves from the same category.
 The only caveat is that, in the case of homotopy flat complexes, we
need the scheme to be semi-separated.

 Concerning the homotopy flat complexes of flats, let us mention here
that a theorem of Christensen and Holm~\cite[Theorem~1.1]{CH} describes
the homotopy flat complexes of flat modules over an arbitrary ring $R$
as the directed colimits of bounded complexes of finitely generated
projective $R$\+modules.
 Our result provides a countable version of the Christensen--Holm
theorem for quasi-coherent sheaves. 

 Let us say a few words about the proofs.
 The proofs of the main results of this paper are based on category
theory.
 Let $\kappa$~be a regular cardinal and $\lambda<\kappa$ be a smaller
infinite cardinal.
 A very general category-theoretic principle going back to
Ulmer~\cite[Section~3]{Ulm} claims that $\kappa$\+accessible categories
with directed colimits of $\lambda$\+indexed chains are preserved by
various ``limit-type'' category-theoretic constructions.
 These include the pseudopullback~\cite{CR,RR}, as well as the inserter,
equifier, and isomorpher~\cite{Pacc} with respect to functors preserving
$\kappa$\+directed colimits and colimits of $\lambda$\+indexed chains
and taking $\kappa$\+presentable objects to
$\kappa$\+presentable objects.
 Moreover, the simplest expected description of the full subcategory
of $\kappa$\+presentable objects in the resulting $\kappa$\+accessible
category holds true.

 The category of flat modules over any ring $R$, as well as various
categories of complexes of flat $R$\+modules we are interested in,
is $\aleph_1$\+accessible with directed colimits of countable chains.
 That is what makes the proofs in this paper possible.

 In the case of a quasi-compact semi-separated scheme, we explain that
having finite projective dimension is a local property of
quasi-coherent sheaves.
 In other words, every quasi-coherent sheaf locally of finite
projective dimension on a quasi-compact semi-separated scheme $X$
has finite projective dimension as an object of the abelian category
of quasi-coherent sheaves $X\qcoh$.
 Therefore, it follows from our result that every flat quasi-coherent
sheaf on $X$ is a directed colimit of flat quasi-coherent sheaves
of finite projective dimension in $X\qcoh$.

 In the second part of the paper, we discuss \emph{cotorsion
periodicity}, first in an abstract category-theoretic setting and
then specifically for quasi-coherent sheaves on quasi-compact
semi-separated schemes, based on the results of the first part of
the paper.
 The subject of \emph{periodicity theorems} in homological algebra
goes back to the now-classical paper of Benson and Goodearl~\cite{BG},
where it was shown that any flat projective-periodic module is
projective.
 This means that if $0\rarrow F\rarrow P\rarrow F\rarrow0$ is
a short exact sequence of flat modules over an associative ring $R$,
and the module $P$ is projective, then the module $F$ is
projective as well~\cite[Theorem~2.5]{BG}.
 The original argument in~\cite{BG} was module-theoretic.

 Later the same result was independently obtained (and strengthened)
by Neeman~\cite{Neem}, who used a very different technique of
complexes of modules and their morphisms up to cochain homotopy.
 In Neeman's formulation, the flat/projective periodicity theorem tells
that, in any (unbounded) acyclic complex of projective modules over
an associative ring with flat modules of cocycles, the modules of
cocycles are actually
projective~\cite[Remark~2.15 and Theorem~8.6]{Neem}.

 Cotorsion periodicity was discovered by Bazzoni, Cort\'es-Izurdiaga,
and Estrada~\cite{BCE}, who showed that any cotorsion-periodic
module is cotorsion~\cite[Theorem~1.2(2), Proposition~4.8(2), or
Theorem~5.1(2)]{BCE}.
 This theorem tells that if $0\rarrow M\rarrow C\rarrow M\rarrow0$ is
a short exact sequence of modules and $C$ is a cotorsion module, then
the module $M$ is cotorsion as well.
 In other words, this means that, in any acyclic complex of cotorsion
modules over an associative ring, the modules of cocycles are
also cotorsion.
 Here a module $C$ over a ring $R$ is said to be \emph{cotorsion} if
$\Ext^1_R(F,C)=0$ for all flat $R$\+modules~$F$.
 The reader can find a general discussion of periodicity theorems in
the introduction to the paper~\cite{BHP} and in
the preprint~\cite[Sections~7.8 and~7.10]{Pphil}.

 We work out a category-theoretic version of the argument of
Bazzoni, Cort\'es-Izurdiaga, and Estrada for cotorsion periodicity,
replacing the considerations of purity with ones of flatness,
which relaxes and simplifies the assumptions.
 Then, as an application of our result about presenting flat
quasi-coherent sheaves as directed colimits, we show that
the cotorsion periodicity holds for quasi-coherent sheaves on
any quasi-compact semi-separated scheme~$X$.
 This means that, in any (unbounded) acyclic complex of cotorsion
quasi-coherent sheaves on $X$, the sheaves of cocycles are cotorsion.
 We recall that a quasi-coherent sheaf $\C$ on $X$ is said to be
\emph{cotorsion} if $\Ext^1_{X\qcoh}(\F,\C)=0$ for all flat
quasi-coherent sheaves~$\F$ \,\cite{EE},
and that this property of being cotorsion cannot be in general
detected locally on modules of sections (cf.\ the discussion
in~\cite[Section~8]{Pphil}).

 Let us say a few more words about the connection between periodicity
and the cocycles in acyclic complexes.
 This is a simple observation going back to~\cite[proof of
Proposition~7.6]{CH} and~\cite[Propositions~1 and~2]{EFI}.
 Let $\sK$ be an abelian category with countable products and $\sB
\subset\sK$ be a class of objects closed under direct summands and
countable products.
 Assuming that the countable product functor in $\sK$ is exact,
it is known that the following two properties are equivalent:
every $\sB$\+periodic object of $\sK$ belongs to $\sB$ if and only
if, in every acyclic complex in $\sK$ with the terms in $\sB$,
the objects of cocycles belong to~$\sB$.
 In this paper, we generalize this result by working out
an additional assumption on an abelian (or exact) category $\sK$
and its full subcategory $\sB$ under which this equivalence of
two properties holds even if the countable products in $\sK$ are
\emph{not} exact.

 As a final application, we deduce the assertion that the derived
category of the abelian category of quasi-coherent sheaves on
a quasi-compact semi-separated scheme $X$ is equivalent to the derived
category of the exact category of cotorsion quasi-coherent sheaves,
$\sD(X\qcoh^\cot)\simeq\sD(X\qcoh)$.
 Thus the derived category of quasi-coherent sheaves can be described
in terms of cotorsion sheaves.

 Sections~\ref{preliminaries-on-categories-secn}\+-%
\ref{qcoh-finite-projdim-secn} form the first part of the paper.
 Section~\ref{preliminaries-on-categories-secn} supplies preliminary
material on accessibility of various categories of modules and
complexes of modules over rings.
 The countable Govorov--Lazard theorem for flat quasi-coherent sheaves
on quasi-compact quasi-separated schemes is proved in
Section~\ref{flat-sheaves-on-qcomp-qsep-schemes-secn}.
 This result is extended to countably quasi-compact, countably
quasi-separated schemes in
Section~\ref{flat-sheaves-on-countably-qc-qs-secn}.
 Our discussion of the local presentability rank of the abelian
category of quasi-coherent sheaves can be also found in
Section~\ref{flat-sheaves-on-countably-qc-qs-secn}.
 Three classes of complexes of flat quasi-coherent sheaves are
discussed in Section~\ref{complexes-of-flats-secn}.
 The connection between the local and global projective dimensions
of quasi-coherent sheaves is explained in
Section~\ref{qcoh-finite-projdim-secn}.

 The second part of the paper consists of
Sections~\ref{prelim-ext-orthogonal-secn}\+-%
\ref{qcoh-cotorsion-periodicity-secn}.
 Section~\ref{prelim-ext-orthogonal-secn} contains the preliminaries
for the second part.
 An important technical proposition about closure properties of classes
of objects in exact categories with exact directed colimits (such as
the categories of flat modules or flat sheaves) is proved
in Section~\ref{exact-categs-exact-dirlims-secn}.
 Based on that, we flesh out our category-theoretic approach to
cotorsion periodicity in
Section~\ref{categorical-cotorsion-periodicity-secn}.
 The cotorsion periodicity for quasi-coherent sheaves is established
in the final Section~\ref{qcoh-cotorsion-periodicity-secn} by combining
the results of the first and second parts of the paper.

\subsection*{Acknowledgement}
 The authors are grateful to Michal Hrbek, Silvana Bazzoni,
Souvik Dey, and Alexander Sl\'avik for helpful discussions.
 We also want to thank the anonymous referee for his insightful reading
of our manuscript and several helpful suggestions.
 In particular, the problem described in
Remark~\ref{prolongation-problem-remark} was spotted by the referee.
 This work is supported by the GA\v CR project 20-13778S.
 The first-named author is also supported by research plan
RVO:~67985840.

\Section{Preliminaries on Categories of Flat Modules and Complexes}
\label{preliminaries-on-categories-secn}

 This paper is written in category-theoretic language, and accessible
categories play an important role.
 We use the book~\cite{AR} as the main background reference source on
accessible categories.

 Let $\kappa$~be a regular cardinal.
 We refer to~\cite[Definition~1.4, Theorem and Corollary~1.5,
Definition~1.13(1), and Remark~1.21]{AR} for a relevant discussion of
\emph{$\kappa$\+directed posets} vs.\ \emph{$\kappa$\+filtered small
categories}, and colimits indexed by these.
 For the purposes of this paper, we are mainly interested in two cases:
$\kappa=\aleph_0$ or $\kappa=\aleph_1$.

 We denote by $\Sets$ the category of sets and by $\Ab$ the category
of abelian groups.

 Let $\sA$ be a category with $\kappa$\+directed colimits.
 An object $P\in\sA$ is said to be \emph{$\kappa$\+presentable}
if the functor $\Hom_\sA(P,{-})\:\sA\rarrow\Sets$ preserves
$\kappa$\+directed colimits~\cite[Definition~1.13(2)]{AR}.
 When the category $\sA$ is additive, this condition is equivalent to
the functor $\Hom_\sA(P,{-})\:\sA\rarrow\Ab$ preserving
$\kappa$\+directed colimits.
 Given a category $\sA$ with $\kappa$\+directed colimits,
we denote by $\sA_{<\kappa}\subset\sA$ the full subcategory of
$\kappa$\+presentable objects in~$\sA$.

 A category $\sA$ with $\kappa$\+directed colimits is said to be
\emph{$\kappa$\+accessible} if there is a set of $\kappa$\+presentable
objects $\sP\subset \sA$ such that all the objects of $\sA$ are
$\kappa$\+directed colimits of objects from~$\sP$
\,\cite[Definition~2.1]{AR}.
 If this is the case, then the $\kappa$\+presentable objects of $\sA$
are precisely all the retracts of the objects from~$\sP$.
 A $\kappa$\+accessible category where all colimits exist is called
\emph{locally $\kappa$\+presentable}~\cite[Definition~1.17]{AR}.

 $\aleph_0$\+presentable objects are called \emph{finitely
presentable}~\cite[Definition~1.1]{AR}, locally $\aleph_0$\+presentable
categories are called \emph{locally finitely
presentable}~\cite[Definition~1.9]{AR}, and
$\aleph_0$\+accessible categories are called \emph{finitely
accessible}~\cite[Remark~2.2(1)]{AR}.

 We refer to~\cite[Theorem~2.11 and Definition~2.12]{AR} for
the definition of the partial order~$\triangleleft$ on the class
of all regular cardinals.
 For our purposes, it is only important that $\aleph_0\triangleleft
\kappa$ for every uncountable regular cardinal~$\kappa$
\,\cite[Example~2.13(1)]{AR}.

 A set is said to be \emph{$\kappa$\+small} if it has less than~$\kappa$
elements.
 A category is said to be \emph{$\kappa$\+small} if it has less
than~$\kappa$ morphisms.
 The \emph{$\kappa$\+small colimits} are the ones indexed by
$\kappa$\+small posets or categories.

\begin{prop} \label{sharply-smaller-cardinal}
 Let $\lambda\triangleleft\kappa$ be a pair of regular cardinals, and
let\/ $\sA$ be a\/ $\lambda$\+accessible category.
 Then $\sA$ is also a $\kappa$\+accessible category.
 The $\kappa$\+presentable objects of\/ $\sA$ are precisely all
the retracts of the $\kappa$\+small\/ $\lambda$\+directed colimits
of the\/ $\lambda$\+presentable objects of\/~$\sA$.
\end{prop}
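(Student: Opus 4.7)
The plan is to exploit the combinatorial content of the relation $\lambda\triangleleft\kappa$, which by~\cite[Theorem~2.11]{AR} amounts to saying that every $\kappa$\+directed poset admits a cofinal $\kappa$\+directed family of $\kappa$\+small $\lambda$\+directed sub-posets whose union is the whole poset. Once this is in hand, the proof splits into an ``easy closure'' step, an accessibility step, and a formal concluding identification of $\sA_{<\kappa}$.

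First I would verify the easy closure: a $\kappa$\+small $\lambda$\+directed colimit of $\lambda$\+presentable objects is $\kappa$\+presentable. Every $\lambda$\+presentable object is $\kappa$\+presentable by the standard monotonicity in the cardinal parameter for $\lambda\le\kappa$, and a $\kappa$\+small colimit of $\kappa$\+presentable objects is $\kappa$\+presentable because $\Hom_\sA$ out of such a colimit is a $\kappa$\+small limit of functors preserving $\kappa$\+directed colimits, and $\kappa$\+small limits commute with $\kappa$\+directed colimits in $\Sets$. Since $\sA_{<\lambda}$ is essentially small, there is only a set, up to isomorphism, of such colimits.

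Next I would prove $\kappa$\+accessibility. Given $X\in\sA$, use $\lambda$\+accessibility to present $X=\varinjlim_{i\in I}X_i$ with $I$ a $\lambda$\+directed poset and each $X_i$ being $\lambda$\+presentable. Let $J$ denote the poset of $\kappa$\+small $\lambda$\+directed sub-posets of $I$, ordered by inclusion. By the property $\lambda\triangleleft\kappa$, the poset $J$ is itself $\kappa$\+directed and $I=\bigcup_{S\in J}S$. Setting $Y_S:=\varinjlim_{i\in S}X_i$ for $S\in J$, each $Y_S$ is $\kappa$\+presentable by the previous step, and $X=\varinjlim_{S\in J}Y_S$ is a $\kappa$\+directed colimit of them. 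Combined with the previous paragraph, this yields a set $\sP$ of $\kappa$\+presentable generators, whence $\kappa$\+accessibility.

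The characterization of $\sA_{<\kappa}$ then follows formally from the general principle (recorded in the excerpt immediately after~\cite[Definition~2.1]{AR}) that in any $\kappa$\+accessible category the $\kappa$\+presentable objects are precisely the retracts of the objects of any generating set of $\kappa$\+presentable objects witnessing $\kappa$\+accessibility. Applied to the set $\sP$ constructed above, this gives the asserted description. The main difficulty I anticipate is the combinatorial book-keeping in the accessibility step: making sure that $J$ is genuinely $\kappa$\+directed and that the iterated colimit $\varinjlim_{S\in J}\varinjlim_{i\in S}X_i$ reindexes correctly to $\varinjlim_{i\in I}X_i$. This is precisely the content of $\lambda\triangleleft\kappa$ as packaged in~\cite[Theorem~2.11]{AR}, and it cannot be replaced by the weaker inequality $\lambda<\kappa$.
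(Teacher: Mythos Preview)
Your proposal is correct and unpacks exactly the argument of~\cite[Theorem~2.11]{AR} that the paper simply cites. The only slip is in your opening sentence, where ``every $\kappa$\+directed poset'' should read ``every $\lambda$\+directed poset''; you apply the combinatorics correctly in the accessibility step to the $\lambda$\+directed index~$I$.
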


\begin{proof}
 The first assertion is~\cite[Theorem~2.11(i)]{AR} (for $\mu=\kappa$).
 The second assertion follows from the proof
of~\cite[Theorem~2.11\,(iv)\,$\Rightarrow$\,(i)]{AR}.
\end{proof}

 Let $R$ be an associative ring.
 We are interested in the additive category of flat $R$\+modules
$R\Modl_\fl$, but we start with a discussion of the ambient abelian
category of arbitrary $R$\+modules $R\Modl$.
 The following lemma is well-known and standard.

\begin{lem} \label{modules-locally-presentable}
 For any ring $R$, the abelian category $R\Modl$ is locally finitely
presentable, and consequently, locally $\kappa$\+presentable for
every regular cardinal~$\kappa$.
 The $\kappa$\+presentable objects of $R\Modl$ are precisely all
the $R$\+modules that can be represented as the cokernel of a morphism
of free $R$\+modules with less than~$\kappa$ generators.  \qed
\end{lem}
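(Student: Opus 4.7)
The plan is to verify the lemma for $\kappa=\aleph_0$ by running through three standard checks, and then to deduce the case of larger regular cardinals from Proposition~\ref{sharply-smaller-cardinal}. For the base case, first, $R\Modl$ is cocomplete, with all colimits computed at the underlying abelian group level. Second, finitely presented modules are finitely presentable: $R^n$ with $n<\omega$ satisfies $\Hom_R(R^n,{-})\cong({-})^n$, a finite product of forgetful functors that preserves directed colimits in $\Ab$; for $P=\coker(R^m\to R^n)$, left exactness of $\Hom_R(-,M)$ combined with exactness of directed colimits in $\Ab$ transfers the preservation property from $R^n$ and $R^m$ to $P$. Third, every $R$\+module $M$ is a directed colimit of finitely presented modules: the comma category of pairs $(P,P\to M)$ with $P$ finitely presented is directed (any two such maps factor jointly through a pushout inside $M$, which is itself finitely presented) and its colimit is $M$, as can be checked on generators and relations.

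For a general regular cardinal $\kappa$, Proposition~\ref{sharply-smaller-cardinal} applied with $\lambda=\aleph_0\triangleleft\kappa$ gives $\kappa$\+accessibility, and combined with cocompleteness this yields local $\kappa$\+presentability. For the description of the $\kappa$\+presentable objects, one direction is easy: if $M=\coker(R^{(J)}\to R^{(I)})$ with $|I|,|J|<\kappa$ and $\kappa>\aleph_0$, then $R^{(I)}$ is a $\kappa$\+small directed colimit of its finitely generated free submodules and hence is $\kappa$\+presentable, likewise $R^{(J)}$, and the cokernel of a morphism between two $\kappa$\+presentable objects is $\kappa$\+presentable.

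The converse direction, which I expect to be the main technical point, is that every $\kappa$\+presentable module admits such a presentation. By Proposition~\ref{sharply-smaller-cardinal}, any $\kappa$\+presentable $M$ is a retract of a $\kappa$\+small $\aleph_0$\+directed colimit $N$ of finitely presented modules, and I would first observe that $N$ itself arises as $\coker(R^{(J)}\to R^{(I)})$ with $|I|,|J|<\kappa$, by collecting all the finite free generators and relations of the finitely presented modules in the diagram together with the connecting morphisms; regularity of $\kappa$ keeps these totals below~$\kappa$. To descend to the retract $M$, write $M=\coker(1-e\colon N\to N)$ for an idempotent $e$, lift $1-e$ along a surjection $R^{(I)}\twoheadrightarrow N$ to a map $R^{(I)}\to R^{(I)}$, and combine this lift with the existing relations to exhibit $M$ as the cokernel of a map $R^{(I\sqcup J)}\to R^{(I)}$. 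The cardinality bookkeeping in this last step is straightforward but relies on regularity of~$\kappa$.
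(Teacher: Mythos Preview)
Your argument is correct. The paper itself offers no proof at all: the lemma is stated with a bare \qed\ as a well-known fact, so there is no approach to compare against. Your outline supplies precisely the standard details one would expect---local finite presentability via the three checks, then Proposition~\ref{sharply-smaller-cardinal} for larger~$\kappa$, with the retract/idempotent trick to pass from a $\kappa$-small directed colimit of finitely presented modules down to an arbitrary $\kappa$-presentable module. One small remark: in your base case you verify that finitely presented modules are finitely presentable and generate everything under directed colimits, but you do not explicitly close the loop by noting that every finitely presentable object is then a retract of a finitely presented module and hence itself finitely presented; this is of course immediate (and is the $\kappa=\aleph_0$ instance of the idempotent argument you spell out later), but worth stating for completeness.
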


 We will use the terminology \emph{countably presentable} for
$\aleph_1$\+presentable $R$\+modules.
 So an $R$\+module is said to be countably presentable if it can be
represented as the cokernel of a morphism of free $R$\+modules with
at most countable sets of generators.

\begin{prop} \label{flat-modules-accessible}
 For any ring $R$, the additive ctategory $R\Modl_\fl$ is finitely
accessible, and consequently, $\kappa$\+accessible for every
regular cardinal~$\kappa$.
 The $\kappa$\+presentable objects of $R\Modl_\fl$ are precisely all
the flat $R$\+modules that are $\kappa$\+presentable in $R\Modl$.
\end{prop}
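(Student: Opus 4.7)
The plan is to derive the proposition from two classical inputs: the Govorov--Lazard theorem and Proposition~\ref{sharply-smaller-cardinal}. By Govorov--Lazard, every flat $R$\+module is a directed colimit of finitely generated free $R$\+modules, so the finitely generated free modules form (up to isomorphism) a set $\sP$ of generators for $R\Modl_\fl$ under directed colimits. To invoke $\aleph_0$\+accessibility, I need each $R^n$ to be $\aleph_0$\+presentable as an object of $R\Modl_\fl$. This is immediate from the key observation that directed colimits in $R\Modl_\fl$ coincide with directed colimits in $R\Modl$, since the class of flat modules is closed under directed colimits in $R\Modl$. Hence $\Hom_{R\Modl_\fl}(R^n,{-})$ is the restriction of $\Hom_{R\Modl}(R^n,{-})$, and the latter preserves all directed colimits by the previous lemma. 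This establishes finite accessibility, and the ``consequently $\kappa$\+accessible'' clause follows from Proposition~\ref{sharply-smaller-cardinal} with $\lambda=\aleph_0$, using $\aleph_0\triangleleft\kappa$ for every uncountable regular $\kappa$ (the case $\kappa=\aleph_0$ being trivial).

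For the characterization of $\kappa$\+presentable objects, I split the claim into two directions. If $F$ is a flat $R$\+module that is $\kappa$\+presentable in $R\Modl$, then the compatibility of directed colimits recalled above immediately yields that $\Hom_{R\Modl_\fl}(F,{-})$ preserves $\kappa$\+directed colimits, so $F$ is $\kappa$\+presentable in $R\Modl_\fl$. Conversely, suppose $F$ is $\kappa$\+presentable in $R\Modl_\fl$. By Proposition~\ref{sharply-smaller-cardinal} applied to $R\Modl_\fl$ with $\lambda=\aleph_0$, the object $F$ is a retract of a $\kappa$\+small $\aleph_0$\+directed colimit of finitely generated free $R$\+modules. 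Such a $\kappa$\+small colimit can be written as the cokernel of a morphism between free $R$\+modules with fewer than $\kappa$ generators, and is therefore $\kappa$\+presentable in $R\Modl$ by the preceding lemma. Since $\kappa$\+presentable objects in a locally $\kappa$\+presentable category are closed under retracts, $F$ itself is $\kappa$\+presentable in $R\Modl$.

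The only delicate point is the interplay between presentability in the two categories $R\Modl_\fl$ and $R\Modl$, and everything hinges on the single fact that the full subcategory of flat modules is closed under directed colimits in $R\Modl$. I expect no serious obstacle, as the argument is a direct assembly of Govorov--Lazard, Proposition~\ref{sharply-smaller-cardinal}, and this compatibility observation.
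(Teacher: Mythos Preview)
Your proposal is correct and follows essentially the same route as the paper: Govorov--Lazard for finite accessibility, closure of $R\Modl_\fl$ under directed colimits in $R\Modl$ for one direction of the presentability comparison, and Proposition~\ref{sharply-smaller-cardinal} for the other direction. The only cosmetic difference is that the paper phrases Govorov--Lazard in terms of finitely generated projective modules rather than finitely generated free ones, and leaves the ``$\kappa$\+small directed colimit is $\kappa$\+presentable in $R\Modl$'' step implicit where you spell it out via the cokernel presentation.
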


\begin{proof}
 Obviously, the full subcategory $R\Modl_\fl$ is closed under directed
colimits in $R\Modl$.
 Hence any object of $R\Modl_\fl$ that is $\kappa$\+presentable in
$R\Modl$ is also $\kappa$\+presentable in $R\Modl_\fl$.

 The Govorov--Lazard characterization of flat
$R$\+modules~\cite{Gov,Laz}, \cite[Corollary~2.22]{GT} tells that
the flat $R$\+modules are precisely the directed colimits of finitely
generated projective $R$\+modules.
 As finitely generated projective $R$\+modules are finitely presentable,
it follows that the category $R\Modl_\fl$ is finitely accessible.

 It remains to refer to Proposition~\ref{sharply-smaller-cardinal},
which describes the $\kappa$\+presentable objects of $R\Modl_\fl$
as the direct summands of $\kappa$\+small directed colimits of finitely
generated projective $R$\+modules.
 Obviously, all such modules are $\kappa$\+presentable in $R\Modl$.
\end{proof}

 Now we pass to complexes of flat $R$\+modules.
 Given an additive category $\sA$, we denote by $\Com(\sA)$
the category of cochain complexes in~$\sA$.

\begin{prop} \label{complexes-of-flats-acessible}
 The category of complexes of flat $R$\+modules\/ $\Com(R\Modl_\fl)$
is\/ $\aleph_1$\+accessible.
 The\/ $\aleph_1$\+presentable objects of\/ $\Com(R\Modl_\fl)$ are
precisely all the complexes of countably presentable flat $R$\+modules.
\end{prop}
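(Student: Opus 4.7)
The plan is to verify the two conditions defining $\aleph_1$\+accessibility directly, and simultaneously identify the $\aleph_1$\+presentable objects. Let $\sP\subset\Com(R\Modl_\fl)$ denote the class (a set, up to isomorphism) of complexes all of whose components are countably presentable flat $R$\+modules. I will show (i)~every object of $\sP$ is $\aleph_1$\+presentable in $\Com(R\Modl_\fl)$, and (ii)~every complex of flats is an $\aleph_1$\+directed colimit of objects from $\sP$. Together these yield $\aleph_1$\+accessibility, and the final sentence of the proposition then follows because ``flat'' and ``countably presentable'' are both closed under retracts in $R\Modl$, so $\sP$ is closed under retracts in $\Com(R\Modl_\fl)$.

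For (i), I would fix $F^\bu \in \sP$ and an $\aleph_1$\+directed colimit $G^\bu = \varinjlim_{i \in I} G_i^\bu$ in $\Com(R\Modl_\fl)$, computed termwise since $R\Modl_\fl$ is closed under directed colimits in $R\Modl$. A morphism $\varphi^\bu\: F^\bu \to G^\bu$ consists of countably many components $\varphi^n\: F^n \to G^n$ subject to countably many commutativity conditions with the differentials. Because each $F^n$ is countably presentable in $R\Modl$ and hence in $R\Modl_\fl$ by Proposition~\ref{flat-modules-accessible}, each $\varphi^n$ factors through some $G_{i(n)}^n$, and each commutativity equation holds in some $G_{j(n)}^\bu$. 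Since $I$ is $\aleph_1$\+directed and the set $\{i(n), j(n) \mid n \in \boZ\}$ is countable, all these factorizations and equalities can be realized simultaneously over a single $G_k^\bu$, producing the required factorization of $\varphi^\bu$.

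For (ii), I would fix a complex $F^\bu$ of flat $R$\+modules and consider the poset $\P(F^\bu)$ of subcomplexes $G^\bu \subseteq F^\bu$ with each $G^n$ countably presentable and flat, showing that it is $\aleph_1$\+directed with union $F^\bu$. The key input is the classical fact that every countable subset of a flat $R$\+module is contained in a countably generated pure, hence flat and countably presentable, submodule. Given any countable family of elements of $F^\bu$, one builds a member of $\P(F^\bu)$ containing them by an $\omega$\+step iterative construction: at each stage, and in every degree $n$, enlarge the current countable subset of $F^n$ to a countably generated flat pure submodule of $F^n$, and then augment the subset at degree $n+1$ by its image under $d^n$; take the union across all stages. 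A directed union of pure submodules remains pure, and a countable union of countably generated modules remains countably generated, so each resulting $G^n$ is a countably presentable flat submodule of $F^n$, and stability under the differential is immediate. Applied to a single element this gives the covering property, and applied to a countable family of members of $\P(F^\bu)$ it gives $\aleph_1$\+directedness.

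The main obstacle is the iterative construction underlying (ii): one must simultaneously maintain countable generation, flatness via purity, and stability under $d^\bu$ across all integer degrees, and it is the $\omega$\+step interleaving across $\boZ$ that accomplishes this. Everything else reduces to bookkeeping with $\aleph_1$\+directedness, which absorbs the countable collections of factoring conditions and commutativity constraints appearing in (i).
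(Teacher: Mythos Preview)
Your argument is correct and takes a genuinely different, more hands-on route than the paper's. The paper's proof is a one-line appeal to a general category-theoretic result \cite[Theorem~6.2 and Corollary~7.4]{Pacc}: for a $\kappa$\+accessible additive category $\sA$ admitting colimits of $\lambda$\+indexed chains for some infinite $\lambda<\kappa$, the category $\Com(\sA)$ is again $\kappa$\+accessible, with $\kappa$\+presentable objects precisely the complexes of $\kappa$\+presentable objects of~$\sA$. Combined with Proposition~\ref{flat-modules-accessible} for $\kappa=\aleph_1$, this yields the proposition immediately. Your approach instead exploits the specific structure of flat modules, building the $\aleph_1$\+directed system of \emph{subcomplexes} of $F^\bu$ by hand via an interleaved purification/differential-closure process. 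The paper's method is modular and feeds directly into the categorical machinery used throughout; yours is self-contained at the module level but trades the abstract input for nontrivial module-theoretic input. One caveat worth flagging: the ``classical fact'' you invoke---that every countable subset of a flat module lies in a countably generated pure submodule, which is then automatically countably \emph{presentable}---is not as elementary as the phrasing suggests. It is essentially the deconstructibility of the class of flat modules (Bican--El~Bashir--Enochs) combined with the Hill Lemma, and in particular the implication ``countably generated flat $\Rightarrow$ countably presentable'' already requires justification beyond bare purity. With that input granted, your construction goes through as written.
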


\begin{proof}
 This is a corollary of Proposition~\ref{flat-modules-accessible}
and~\cite[Theorem~6.2]{Pacc}.
 See~\cite[Corollary~10.4]{Pacc}.
\end{proof}

\begin{lem} \label{short-exact-seq-of-flats-accessible}
 The category\/ $\Ac^3(R\Modl_\fl)$ of short exact sequences of flat
$R$\+modules is\/ $\aleph_1$\+accessible.
 The\/ $\aleph_1$\+presentable objects of\/ $\Ac^3(R\Modl_\fl)$ are
precisely all the short exact sequences of countably presentable
flat $R$\+modules.
\end{lem}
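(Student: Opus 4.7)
The plan is to follow the strategy of Proposition~\ref{complexes-of-flats-acessible}: combine Proposition~\ref{flat-modules-accessible} with the general accessibility machinery of~\cite{Pacc} invoked in the introduction. Concretely, I would realize $\Ac^3(R\Modl_\fl)$ as the full subcategory of the diagram category $\Funct(J,\,R\Modl_\fl)$, where $J$ is the three-object category $\bullet\to\bullet\to\bullet$, cut out by the conditions that (i)~the composition of the two structural morphisms vanishes, and (ii)~the resulting three-term complex is exact in the ambient abelian category $R\Modl$.

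By Proposition~\ref{flat-modules-accessible} together with the diagram-category results of~\cite{Pacc} (the same tools used in Proposition~\ref{complexes-of-flats-acessible}), the category $\Funct(J,\,R\Modl_\fl)$ is $\aleph_1$\+accessible, with $\aleph_1$\+presentable objects the termwise countably presentable diagrams. Condition~(i) can be imposed as an equifier of the composed morphism with the zero morphism; this preserves $\aleph_1$\+accessibility and the description of $\aleph_1$\+presentable objects. Condition~(ii) cuts out a full subcategory of the resulting category of three-term complexes of flats which is closed under $\aleph_1$\+directed colimits, because directed colimits in $R\Modl$ are exact and the inclusion $R\Modl_\fl\hookrightarrow R\Modl$ preserves them. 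Putting this together, $\Ac^3(R\Modl_\fl)$ is $\aleph_1$\+accessible, and its $\aleph_1$\+presentable objects are simultaneously short exact and termwise countably presentable flat, i.e.\ precisely the short exact sequences of countably presentable flat $R$\+modules.

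I expect the main obstacle to be verifying in concrete terms that every short exact sequence of flat $R$\+modules is genuinely an $\aleph_1$\+directed colimit of short exact sequences of countably presentable flats, rather than merely lying in the $\aleph_1$\+directed-colimit closure of some larger set. This requires a coordinated approximation: given $0\to F'\to F\to F''\to 0$, one must iteratively enlarge any countably generated flat subobject to a compatible short exact subsequence of countably presentable flats, and show that such subsequences form a cofinal $\aleph_1$\+directed system. The needed closure of countably presentable flat modules under the relevant countable operations (in particular under taking kernels of surjections inside suitable short exact contexts) is already packaged in the general theorems of~\cite{Pacc}; the cleanest route is thus to invoke those directly rather than rebuild the approximation argument by hand.
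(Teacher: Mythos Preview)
Your proposal ends up in the same place as the paper's proof: both ultimately defer to the results of~\cite{Pacc} (the paper simply cites \cite[Corollary~7.12]{Pacc} or \cite[Proposition~4.6]{Plce}, prefaced by the key remark that the kernel of a surjection between countably presentable flat $R$\+modules is again countably presentable, so ``two out of three countably presentable'' implies all three).

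Where your sketch is weaker than the paper's citation is the handling of condition~(ii). Saying that the short exact three-term complexes form a full subcategory of the category of three-term complexes closed under $\aleph_1$\+directed colimits is true but does not, on its own, give $\aleph_1$\+accessibility; and exactness is not naturally an equifier or isomorpher condition \emph{inside} $R\Modl_\fl$, since the relevant kernels and cokernels are computed in $R\Modl$. You recognize this in your second paragraph and correctly identify that the real content---the coordinated approximation of a short exact sequence of flats by short exact sequences of countably presentable flats---is what~\cite{Pacc} packages. So the honest version of your argument collapses to the paper's: invoke the relevant corollary of~\cite{Pacc} directly, and record the kernel observation (which is exactly the ``closure under kernels of surjections'' you allude to) as the reason the description of $\aleph_1$\+presentable objects comes out right.
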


\begin{proof}
 It is instructive to notice that the kernel of any surjective morphism
of countably presentable flat $R$\+modules is countably
presentable~\cite[Lemma~4.1]{Pflcc}, \cite[Corollary~10.12]{Pacc},
\cite[Corollary~4.7]{Plce}.
 Hence, if two terms of a short exact sequence of flat $R$\+modules are
countably presentable, then so is the third term.

 The assertion of the lemma can be found in~\cite[Corollary~10.13]{Pacc}
or~\cite[first assertion of Proposition~4.6]{Plce}.
\end{proof}

\begin{prop} \label{acyclic-flat-cocycles-accessible}
 The category\/ $\Ac_\fl(R\Modl_\fl)$ of acyclic complexes of flat
$R$\+modules with flat $R$\+modules of cocycles is\/
$\aleph_1$\+accessible.
 The\/ $\aleph_1$\+presentable objects of\/ $\Ac_\fl(R\Modl_\fl)$ are
precisely all the acyclic complexes of countably presentable flat
$R$\+modules with flat modules of cocycles.
\end{prop}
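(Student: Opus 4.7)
The plan is to realize $\Ac_\fl(R\Modl_\fl)$ as a (pseudo)pullback of copies of $\Ac^3(R\Modl_\fl)$ indexed by $\boZ$, and then appeal to the general category-theoretic closure result for pseudopullbacks of $\aleph_1$-accessible categories under functors preserving $\aleph_1$-directed colimits and $\aleph_1$-presentable objects, as recalled in the introduction and developed in \cite{CR,RR,Pacc}.

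First, I would observe that giving an object $F^\bu$ of $\Ac_\fl(R\Modl_\fl)$ is the same as giving a $\boZ$-indexed collection of short exact sequences of flat $R$-modules $0\to Z^n\to F^n\to Z^{n+1}\to 0$, together with the canonical identification of the right-hand term of the $n$-th sequence with the left-hand term of the $(n+1)$-st. Denoting by $\pi_\ell,\pi_r\:\Ac^3(R\Modl_\fl)\rightrightarrows R\Modl_\fl$ the ``first term'' and ``third term'' forgetful functors, this description realizes $\Ac_\fl(R\Modl_\fl)$ as the pseudopullback of the $\boZ$-indexed diagram consisting of copies of $\Ac^3(R\Modl_\fl)$ joined via $\pi_\ell$ and $\pi_r$.

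Second, by Lemma~\ref{short-exact-seq-of-flats-accessible} together with Proposition~\ref{flat-modules-accessible}, the functors $\pi_\ell$ and $\pi_r$ preserve $\aleph_1$-directed colimits (colimits in $\Ac^3(R\Modl_\fl)$ being computed termwise and direct limits being exact) and take $\aleph_1$-presentable objects to $\aleph_1$-presentable objects. The closure theorem then yields that the pseudopullback is $\aleph_1$-accessible, its $\aleph_1$-presentable objects being precisely the gluings of $\aleph_1$-presentable short exact sequences of flats. Unwinding, these correspond exactly to acyclic complexes of countably presentable flat $R$-modules with flat cocycles; that the cocycles themselves are automatically countably presentable in this situation is ensured by the kernel-of-surjection remark stated at the start of the proof of Lemma~\ref{short-exact-seq-of-flats-accessible}.

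The main obstacle to turning this into a rigorous proof is that the pseudopullback in question is infinitary---indexed by $\boZ$---so one either has to verify directly that the general closure principle extends to this setting (it does, because the relevant matching conditions are $\aleph_1$-small and the forgetful functors behave well with respect to countable chains), or cite a ready-made version tailored to this specific situation. The latter route seems preferable: the analogous gluing for acyclic complexes of flats has been worked out in~\cite[Corollary~7.12]{Pacc} and \cite[Proposition~4.6]{Plce}, and I expect the statement above to follow as a direct corollary of those results, in parallel with how Proposition~\ref{complexes-of-flats-acessible} and Lemma~\ref{short-exact-seq-of-flats-accessible} were deduced.
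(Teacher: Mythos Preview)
Your proposal is correct and follows essentially the same approach as the paper: splice an object of $\Ac_\fl(R\Modl_\fl)$ into its $\boZ$-indexed family of short exact sequences of flat modules, invoke Lemma~\ref{short-exact-seq-of-flats-accessible}, and then apply the accessibility machinery from~\cite{Pacc} to the resulting $\boZ$-shaped limit diagram. The paper's proof is just a two-line citation of exactly this route, pointing to~\cite[Remark~5.2]{Pacc} (which packages the infinitary gluing you describe) and~\cite[Corollary~7.13]{Pacc} for the ready-made statement; note that the precise reference is Corollary~7.13, not~7.12 (the latter is the one underlying Lemma~\ref{short-exact-seq-of-flats-accessible}).
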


\begin{proof}
 This is a corollary of Lemma~\ref{short-exact-seq-of-flats-accessible}
and~\cite[Remark~5.2]{Pacc}.
 See~\cite[Corollary~10.14]{Pacc}.
\end{proof}

 A complex of left $R$\+modules $F^\bu$ is said to be
\emph{homotopy flat}~\cite{Spal} if, for every acyclic complex of
right $R$\+modules $A^\bu$, the complex of abelian groups
$A^\bu\ot_RF^\bu$ is acyclic.
 In this paper, we are interested in homotopy flat complexes of
\emph{flat} $R$\+modules; in the terminology of the paper~\cite{CH},
these are called \emph{semi-flat} complexes.
 We denote the full subcategory of homotopy flat complexes of flat
$R$\+modules by $\Com(R\Modl_\fl)_\hfl\subset\Com(R\Modl)$.

\begin{prop} \label{homotopy-flats-accessible}
 The category\/ $\Com(R\Modl_\fl)_\hfl$ of homotopy flat complexes of
flat $R$\+modules is finitely accessible, and consequently,
$\kappa$\+accessible for all regular cardinals~$\kappa$.
 The finitely presentable objects of $\Com(R\Modl_\fl)_\hfl$ are
precisely all the bounded complexes of finitely generated projective
$R$\+modules.
 For any uncountable regular cardinal~$\kappa$,
the $\kappa$\+presentable objects of\/ $\Com(R\Modl_\fl)_\hfl$ are
precisely all the homotopy flat complexes of $\kappa$\+presentable
flat $R$\+modules.
\end{prop}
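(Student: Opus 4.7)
The plan is to combine the Christensen--Holm theorem \cite[Theorem~1.1]{CH} with Proposition~\ref{sharply-smaller-cardinal}, following the pattern of Propositions~\ref{flat-modules-accessible} and~\ref{complexes-of-flats-acessible}. First I would observe that $\Com(R\Modl_\fl)_\hfl$ is closed under directed colimits in $\Com(R\Modl)$: a directed colimit of flat modules is flat, and directed colimits commute with the tensor product $A^\bu\ot_R-$ as well as with homology of a complex of abelian groups, so homotopy flatness is preserved. Consequently, directed colimits in $\Com(R\Modl_\fl)_\hfl$ are computed termwise, as in $\Com(R\Modl)$. Next, any bounded complex $P^\bu$ of finitely generated projective $R$-modules is homotopy flat and finitely presentable in $\Com(R\Modl)$, since only finitely many of its components are nonzero and each is finitely presentable; by the closure just observed, such $P^\bu$ is then finitely presentable inside $\Com(R\Modl_\fl)_\hfl$. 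The Christensen--Holm theorem states exactly that every object of $\Com(R\Modl_\fl)_\hfl$ is a directed colimit of such $P^\bu$, yielding finite accessibility. The identification of the finitely presentable objects follows because a retract of a bounded complex of f.g.\ projectives, computed termwise, is again of that form (retracts of f.g.\ projective modules are f.g.\ projective).

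For an uncountable regular cardinal $\kappa$, Proposition~\ref{sharply-smaller-cardinal} yields $\kappa$-accessibility and presents the $\kappa$-presentable objects of $\Com(R\Modl_\fl)_\hfl$ as retracts of $\kappa$-small directed colimits of bounded complexes of f.g.\ projectives. Each such colimit is, termwise, a $\kappa$-small directed colimit of finitely presentable modules, hence has $\kappa$-presentable flat components by Proposition~\ref{flat-modules-accessible}. So every $\kappa$-presentable object of $\Com(R\Modl_\fl)_\hfl$ is a homotopy flat complex of $\kappa$-presentable flat modules.

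The converse inclusion is the main work. I would first show that for any uncountable regular~$\kappa$, every complex $F^\bu$ of $\kappa$-presentable flat modules is already $\kappa$-presentable in $\Com(R\Modl_\fl)$: given a $\kappa$-directed diagram $(G^\bu_i)$ with termwise colimit $G^\bu$, a morphism $F^\bu\rarrow G^\bu$ yields, degree by degree, countably many factorizations $F^n\rarrow G^n_{i_n}$; since $\aleph_0<\kappa$, the countable set $\{i_n\}$ has an upper bound in the diagram, and the countably many commutativity constraints with the differentials (and analogous conditions for equality of morphisms) can in turn be arranged at a single further stage. This is the same principle underlying \cite[Theorem~6.2]{Pacc} and the proof of Proposition~\ref{complexes-of-flats-acessible}. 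Since $\Com(R\Modl_\fl)_\hfl$ is closed under $\kappa$-directed colimits in $\Com(R\Modl_\fl)$, any homotopy flat $F^\bu$ with $\kappa$-presentable flat components is then $\kappa$-presentable in the subcategory. The main obstacle is precisely this bookkeeping: aligning countably many factorization and commutativity conditions inside a $\kappa$-directed diagram for $\kappa\geq\aleph_1$; once this is formalized (or quoted from \cite{Pacc}), the remainder of the argument is formal, with the Christensen--Holm theorem doing the substantive work at the level of $\aleph_0$.
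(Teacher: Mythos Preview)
Your proposal is correct and follows essentially the same approach as the paper's proof: closure of $\Com(R\Modl_\fl)_\hfl$ under directed colimits, finite presentability of bounded complexes of finitely generated projectives, the Christensen--Holm theorem for finite accessibility, and Proposition~\ref{sharply-smaller-cardinal} for the $\kappa$-presentable description. The only difference is expositional: you spell out the retract closure for the $\aleph_0$-case and the countable bookkeeping argument showing that complexes of $\kappa$-presentable modules are $\kappa$-presentable for uncountable~$\kappa$, whereas the paper states the latter as a fact (implicitly appealing to the same principle behind \cite[Theorem~6.2]{Pacc}).
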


\begin{proof}
 It is easy to see that all bounded complexes of finitely presentable
$R$\+modules are finitely presentable in $\Com(R\Modl)$.
 Since the full subcategory $\Com(R\Modl_\fl)_\hfl$ is closed under
directed colimits in $\Com(R\Modl)$, all bounded complexes of finitely
generated projective $R$\+modules are finitely presentable in
$\Com(R\Modl_\fl)_\hfl$.

 Similarly, for any uncountable regular cardinal~$\kappa$,
all complexes of $\kappa$\+presentable $R$\+modules are
$\kappa$\+presentable in $\Com(R\Modl)$.
 Consequently, all homotopy flat complexes of $\kappa$\+presentable
flat $R$\+modules are $\kappa$\+presentable in $\Com(R\Modl_\fl)_\hfl$.

 The nontrivial assertion that all homotopy flat complexes of flat
$R$\+modules are directed colimits of bounded complexes of finitely
generated projective $R$\+modules is the result
of~\cite[Theorem~1.1]{CH}.
 The description of $\kappa$\+presentable objects
of $\Com(R\Modl_\fl)_\hfl$ follows by virtue of
Proposition~\ref{sharply-smaller-cardinal}.
\end{proof}

\Section{Flat Quasi-Coherent Sheaves as Directed Colimits~I}
\label{flat-sheaves-on-qcomp-qsep-schemes-secn}

 We start with a lemma showing that a ``locally countably
presentable quasi-coherent sheaf'' is a well-defined notion.

\begin{lem} \label{locality-generatedness-presentability}
 Let $U=\bigcup_\alpha U_\alpha$ be an affine scheme covered by
a finite number of affine open subschemes.
 Let $M$ be an $\O(U)$\+module.
 Then \par
\textup{(a)} the $\O(U)$\+module $M$ is finitely generated if and
only if, for every index~$\alpha$, the $\O(U_\alpha)$\+module
$\O(U_\alpha)\ot_{\O(U)}M$ is finitely generated; \par
\textup{(b)} the $\O(U)$\+module $M$ is finitely presentable if and
only if, for every index~$\alpha$, the $\O(U_\alpha)$\+module
$\O(U_\alpha)\ot_{\O(U)}M$ is finitely presentable; \par
\textup{(c)} the $\O(U)$\+module $M$ is countably generated if and
only if, for every index~$\alpha$, the $\O(U_\alpha)$\+module
$\O(U_\alpha)\ot_{\O(U)}M$ is countably generated; \par
\textup{(d)} the $\O(U)$\+module $M$ is countably presentable if and
only if, for every index~$\alpha$, the $\O(U_\alpha)$\+module
$\O(U_\alpha)\ot_{\O(U)}M$ is countably presentable.
\end{lem}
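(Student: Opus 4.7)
The plan is to dispatch the ``only if'' directions of all four parts uniformly. Base change along $\O(U)\to\O(U_\alpha)$ is right exact and takes a free $\O(U)$\+module with a generating set of cardinality $<\kappa$ to a free $\O(U_\alpha)$\+module with generating set of the same cardinality. Hence it preserves $\kappa$\+generation and $\kappa$\+presentation for every cardinal~$\kappa$, and in particular for $\kappa=\aleph_0$ and $\kappa=\aleph_1$, which handles the four ``only if'' implications at once.

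For the ``if'' direction, I would first reduce to a principal open cover. Each $U_\alpha$, being quasi-compact and open in the affine scheme $U$, is a finite union of principal opens $D(f_{\alpha,i})\subseteq U$; since the $U_\alpha$ cover $U$, the finitely many elements $f_{\alpha,i}\in R:=\O(U)$ together generate the unit ideal of~$R$. Each $\O(D(f_{\alpha,i}))\otimes_{\O(U)}M$ is a further localization of $\O(U_\alpha)\otimes_{\O(U)}M$ at an element of $\O(U_\alpha)$, and localization preserves finite and countable generation and presentation. So the hypothesis on the $U_\alpha$ transfers to the hypothesis on the $D(f_{\alpha,i})$, and it suffices to treat the case where $U_\alpha=\Spec R[f_\alpha^{-1}]$ and $f_1,\dots,f_n\in R$ generate the unit ideal.

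For parts (a) and (c) it remains to show: if every $M[f_\alpha^{-1}]$ has a generating set of cardinality $<\kappa$, where $\kappa$~is either $\aleph_0$ or $\aleph_1$, then so does~$M$. I would lift each such generating set to a subset $S_\alpha\subseteq M$ of the same cardinality by clearing denominators. The union $S=\bigcup_\alpha S_\alpha$ still has cardinality $<\kappa$, because there are only finitely many~$\alpha$. The submodule $N\subseteq M$ generated by $S$ then satisfies $N[f_\alpha^{-1}]=M[f_\alpha^{-1}]$ for every~$\alpha$, so $(M/N)[f_\alpha^{-1}]=0$; as the $f_\alpha$ generate the unit ideal, this forces $M=N$.

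For parts (b) and (d), apply (a) or (c) first to present $M$ as a quotient $R^{(I)}\to M\to 0$ with $|I|<\kappa$, and let $K$ be the kernel. Localization gives $R[f_\alpha^{-1}]^{(I)}\to M[f_\alpha^{-1}]\to 0$ with kernel $K[f_\alpha^{-1}]$, and the standard Schanuel-type characterization of $\kappa$\+presentable modules tells that each $K[f_\alpha^{-1}]$ is $<\kappa$\+generated over $R[f_\alpha^{-1}]$. Applying (a) or (c) now to $K$ shows that $K$ itself is $<\kappa$\+generated, so $M$ is $\kappa$\+presentable. The only nontrivial input is thus the elementary vanishing criterion ``$N[f_\alpha^{-1}]=0$ for every~$\alpha$ implies $N=0$ when the $f_\alpha$ generate the unit ideal'', used once directly for generation and a second time via the kernel $K$ for presentation.
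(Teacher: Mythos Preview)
Your proof is correct and follows essentially the same strategy as the paper's: handle the ``only if'' directions trivially by base change, then for the ``if'' directions lift generators, use the vanishing criterion ($N_\alpha=0$ for all~$\alpha$ implies $N=0$), and for presentability apply the kernel argument (the kernel of a surjection from a $<\kappa$-generated free module onto a $\kappa$-presentable module is $<\kappa$-generated). The one difference is that you first reduce to a principal open cover so that lifting becomes literally ``clearing denominators'', whereas the paper phrases the lifting step more abstractly (any finitely generated $\O(U_\alpha)$-submodule of $\O(U_\alpha)\otimes_{\O(U)}M$ is contained in $\O(U_\alpha)\otimes_{\O(U)}L$ for some finitely generated $L\subset M$, which holds for any flat ring map since elements of a tensor product are finite sums of elementary tensors). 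Your reduction is harmless and makes the argument slightly more hands-on; the paper's version avoids the extra step but requires the reader to supply the lifting observation for general affine opens.
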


\begin{proof}
 All the claims follow from the observations that
\begin{itemize}
\item the localization/tensor product functors
$\O(U_\alpha)\ot_{\O(U)}{-}$ are exact;
\item for any $\O(U)$\+module $M$, any index~$\alpha$, and any finitely
generated $\O(U_\alpha)$\+sub\-module $L_\alpha\subset
\O(U_\alpha)\ot_{\O(U)}M$, there exists a finitely generated
$\O(U)$\+submod\-ule $L\subset M$ such that $L_\alpha\subset
\O(U_\alpha)\ot_{\O(U)}L\subset\O(U_\alpha)\ot_{\O(U)}M$;
\item if, for a given $\O(U)$\+module $M$, one has
$\O(U_\alpha)\ot_{\O(U)}M=0$ for all the indices~$\alpha$, then $M=0$;
\item the kernel of a surjective morphism from a finitely generated
module to a finitely presentable one is finitely generated; and
similarly, the kernel of a surjective morphism from a countably
generated module to a countably presentable one is countably generated.
\end{itemize}
 We leave details to the reader.
\end{proof}

 A quasi-coherent sheaf $\M$ on a scheme $X$ is said to be
\emph{locally finitely generated} (respectively, \emph{locally finitely
presentable}, \emph{locally countably generated}, or \emph{locally
countably presentable}) if, for every affine open subscheme
$U\subset X$, the $\O(U)$\+module $\M(U)$ is finitely generated (resp.,
finitely presentable, countably generated, or countably presentable).
 Lemma~\ref{locality-generatedness-presentability} tells that it 
suffices to check these properties for affine open subschemes $U_\alpha$
belonging to any chosen affine open covering
$X=\bigcup_\alpha U_\alpha$ of a scheme~$X$.

 The next lemma allows to glue quasi-coherent sheaves on a scheme $X$
from a covering by two open subschemes.

\begin{lem} \label{glue-from-a-cover-by-two}
 Let $X=U\cup V$ be a covering of a scheme $X$ by two open subschemes
$U$, $V\subset X$.
 Then the category of quasi-coherent sheaves $\M$ on $X$ is equivalent
to the category formed by the following sets of data:
\begin{enumerate}
\item a quasi-coherent sheaf $\M_U$ on $U$ and a quasi-coherent sheaf
$\M_V$ on $V$ are given;
\item an isomorphism between the two restrictions of quasi-coherent
sheaves to an open subscheme $(\M_U)|_{U\cap V}\simeq(\M_V)|_{U\cap V}$
is given.
\end{enumerate}
 Here morphisms between the sets of data are defined in the obvious way.
\end{lem}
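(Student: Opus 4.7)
The plan is to exhibit mutually quasi-inverse functors. Denote the category of descent data described in (1)--(2) by $\D$. The forward functor $\Phi\:X\qcoh\rarrow\D$ sends a quasi-coherent sheaf $\M$ on $X$ to the triple $(\M|_U,\M|_V,\mathrm{can})$, where the isomorphism on $U\cap V$ is the canonical identification of the iterated restrictions $(\M|_U)|_{U\cap V}=\M|_{U\cap V}=(\M|_V)|_{U\cap V}$; functoriality is immediate from functoriality of restriction.

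The substantive task is to construct a quasi-inverse $\Psi\:\D\rarrow X\qcoh$ by explicit gluing. Given $(\M_U,\M_V,\phi)$, I would define, for every open $W\subset X$,
\[\M(W):=\bigl\{(s,t)\in\M_U(W\cap U)\times\M_V(W\cap V)\bigm|\phi(s|_{W\cap U\cap V})=t|_{W\cap U\cap V}\bigr\},\]
with restriction maps induced componentwise. Verifying that the presheaf $\M$ is a sheaf of $\O_X$-modules is routine, since the equalizer defining $\M(W)$ commutes with the equalizers expressing the sheaf axiom for $\M_U$ and $\M_V$.

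The main obstacle is checking that $\M$ is actually \emph{quasi-coherent}, and not merely a sheaf of $\O_X$-modules. I would handle this by showing that $\M|_U\cong\M_U$ and $\M|_V\cong\M_V$ as sheaves of modules on $U$ and $V$ respectively. Indeed, for an open $W\subset U$ one has $W\cap U=W$ and $W\cap V\subset U\cap V$, so the compatibility condition forces $t=\phi(s|_{W\cap V})$; hence $t$ is uniquely determined by~$s$, and projection to the first coordinate yields $\M(W)\cong\M_U(W)$. The symmetric statement on $V$ then shows that $\M$ is locally quasi-coherent on the cover $X=U\cup V$, and therefore quasi-coherent on~$X$.

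Finally, a natural isomorphism $\Phi\Psi\cong\mathrm{id}_\D$ follows from the identifications $\M|_U\cong\M_U$, $\M|_V\cong\M_V$ just obtained (compatibility with $\phi$ being automatic from the construction), while $\Psi\Phi\cong\mathrm{id}_{X\qcoh}$ reduces to the sheaf property of any quasi-coherent sheaf applied to the two-element cover $X=U\cup V$. The only genuinely technical step is the quasi-coherence verification; everything else is the formal content of Zariski descent for sheaves of modules along a two-element cover.
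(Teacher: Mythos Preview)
Your proposal is correct and fleshes out precisely the standard Zariski-gluing argument that the paper alludes to with its one-line ``Standard and straightforward'' proof. The explicit equalizer construction of $\Psi$, the verification of quasi-coherence via the identifications $\M|_U\cong\M_U$ and $\M|_V\cong\M_V$, and the use of the sheaf axiom for $\Psi\Phi\cong\mathrm{id}$ are exactly the expected ingredients.
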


\begin{proof}
 Standard and straightforward.
\end{proof}

 Let $X$ be a scheme.
 A quasi-coherent sheaf $\F\in X\qcoh$ is said to be \emph{flat}
if, for every affine open subscheme $U\subset X$, the $\O(U)$\+module
$\F(U)$ is flat.
 It suffices to check this condition for the affine open subschemes
$U\subset X$ belonging to any fixed affine open covering of~$X$.
 If the scheme $X$ is quasi-separated, then a quasi-coherent sheaf
$\F$ on $X$ is flat if and only if the tensor product functor
${-}\ot_{\O_X}\F\:X\qcoh\rarrow X\qcoh$ is exact (as a functor on
the abelian category $X\qcoh$).

 We denote the abelian category of quasi-coherent sheaves on
a scheme $X$ by $X\qcoh$ and the full subcategory of flat
quasi-coherent sheaves by $X\qcoh_\fl\subset X\qcoh$.

\begin{rem} \label{prolongation-problem-remark}
 It is obvious that, for any scheme $X$ and any flat quasi-coherent
sheaf $\F\in X\qcoh$, the tensor product functor
${-}\ot_{\O_X}\F\:X\qcoh\rarrow X\qcoh$ is exact.
 Conversely, let $\F$ be a quasi-coherent sheaf on $X$ for which
the functor ${-}\ot_{\O_X}\F$ is exact on $X\qcoh$.
 Then, in order to deduce flatness of the $\O(U)$\+module $\F(U)$
for an affine open subscheme $U\subset X$, one would need to be able
to extend any quasi-coherent sheaf $\M_U$ on $U$ with a quasi-coherent
subsheaf $\N_U\subset\M_U$ to a quasi-coherent sheaf $\M$ on $X$ with
a quasi-coherent subsheaf $\N\subset\M$.

 This problem of prolongation of quasi-coherent sheaves is easily
solvable for a quasi-compact open immersion morphism $j\:U\rarrow X$;
one can simply put $\M=j_*\M_U$ and $\N=j_*\N_U$
(see~\cite[Proposition~I.6.9.2]{EGA1} or~\cite[Lemma Tag~01PE]{SP}).
 We do \emph{not} know how to solve this problem otherwise.
 It would be even sufficient (for the purposes of characterizing
flat quasi-coherent sheaves $\F$ as above) to assume that $\M_U=\O_U$;
then one can take $\M=\O_X$.
 Still we do \emph{not} know how to extend a quasi-coherent subsheaf
$\N_U\subset\O_U$ to a quasi-coherent subsheaf $\N\subset\O_X$ (i.~e.,
how to extend a quasi-coherent sheaf of ideals from an open subscheme)
without assuming the open immersion morphism $j\:U\rarrow X$ to be
quasi-compact.
 Hence the assumption of quasi-separatedness of the scheme $X$ in
the paragraph preceding this remark.
 (Cf.\ Lemma~\ref{homotopy-flatness-locality-lemma}
and Corollary~\ref{homotopy-flatness-locality-cor} below.)
 We are grateful to the anonymous referee for bringing this issue
to our attention.
\end{rem}

 Let $\sA$, $\sB$, and $\sC$ be three categories, and let
$F\:\sA\rarrow\sC$ and $G\:\sB\rarrow\sC$ be two functors.
 The \emph{pseudopullback} of the two functors $F$ and $G$ is
defined as the category $\sD$ of all triples $(A,B,\theta)$, where
$A\in\sA$ and $B\in\sB$ are objects, and $\theta\:F(A)\simeq G(B)$
is an isomorphism in~$\sC$.
 It is important for us that Lemma~\ref{glue-from-a-cover-by-two}
represents the category $X\qcoh$ of quasi-coherent sheaves on $X$ as
a pseudopullback of the categories $U\qcoh$ and $V\qcoh$ over
$(U\cap\nobreak V)\qcoh$.

 Given a category $\sA$ and a ordinal~$\alpha$, by
a \emph{$\alpha$\+indexed chain} (of objects and morphisms) in $\sA$ 
we mean a functor $\alpha\rarrow\sA$, where the directed set~$\alpha$
is viewed as a category in the usual way.
 So an $\alpha$\+indexed chain is a directed diagram
$(A_i\to A_j)_{0\le i<j<\alpha}$ in~$\sA$.
 In the following proposition, a cardinal~$\lambda$ is considered
as an ordinal.

\begin{prop} \label{pseudopullback-prop}
 Let $\kappa$~be a regular cardinal, and $\lambda<\kappa$ be a smaller
infinite cardinal.
 Let\/ $\sA$, $\sB$, and\/ $\sC$ be $\kappa$\+accessible categories in
which the colimits of all $\lambda$\+indexed chains exist.
 Let $F\:\sA\rarrow\sC$ and $G\:\sB\rarrow\sC$ be two functors
preserving $\kappa$\+directed colimits and colimits of
$\lambda$\+indexed chains, and taking $\kappa$\+presentable objects to
$\kappa$\+presentable objects.
 Then the pseudopullback\/ $\sD$ of the pair of functors $F$ and $G$ is
a $\kappa$\+accessible category, and the $\kappa$\+presentable objects
of\/ $\sD$ are precisely all the objects $(A,B,\theta)\in\sD$ with
$A\in\sA_{<\kappa}$ and $B\in\sB_{<\kappa}$.
\end{prop}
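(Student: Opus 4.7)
My plan is to proceed in three stages: first, show that $\kappa$\+directed colimits and colimits of $\lambda$\+indexed chains in $\sD$ are created by the two projection functors $\sD\to\sA$ and $\sD\to\sB$; second, verify the easy half of the $\kappa$\+presentability characterization; and third, prove that every object of $\sD$ is a $\kappa$\+directed colimit of objects $(A,B,\theta)$ with $A\in\sA_{<\kappa}$ and $B\in\sB_{<\kappa}$.

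The first stage is essentially formal. Given a diagram of the appropriate shape in $\sD$, I would take componentwise colimits $A_\ast$ in $\sA$ and $B_\ast$ in $\sB$; since $F$ and $G$ preserve these colimits, the compatible system of isomorphisms $\theta_i$ transports to an isomorphism $\theta_\ast\: F(A_\ast)\simeq G(B_\ast)$, and the resulting object is universal. For the easy half of the second stage, suppose $A\in\sA_{<\kappa}$, $B\in\sB_{<\kappa}$, and consider a morphism from $(A,B,\theta)$ to a $\kappa$\+directed colimit $(A_\ast,B_\ast,\theta_\ast)=\varinjlim_k(A_k,B_k,\theta_k)$. The two components factor through some $A_i$ and some $B_j$; $\kappa$\+directedness aligns them to a common index, and compatibility with $\theta_k$ becomes attainable after a further bounded increase, using that $F(A)$ is $\kappa$\+presentable in $\sC$ by the hypothesis on~$F$.

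The serious part is the third stage, which I intend to handle by a back-and-forth construction along a $\lambda$\+indexed chain. Fix $(A,B,\theta)\in\sD$ and write $A=\varinjlim_iA_i$, $B=\varinjlim_jB_j$ as $\kappa$\+directed colimits of $\kappa$\+presentables. Starting from an arbitrary $A^0\to A$ with $A^0\in\sA_{<\kappa}$, the composite $F(A^0)\to F(A)\xrightarrow{\theta}G(B)$ factors through some $G(B^0)$ with $B^0\in\sB_{<\kappa}$, because $G$ preserves $\kappa$\+directed colimits and $F(A^0)$ is $\kappa$\+presentable in~$\sC$. The composite $G(B^0)\to G(B)\xrightarrow{\theta^{-1}}F(A)$ then factors through some $F(A^1)$ with $A^0\to A^1$ a morphism in $\sA_{<\kappa}$. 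Iterating over $\lambda$~steps produces chains $(A^n)_{n<\lambda}$ and $(B^n)_{n<\lambda}$ and zigzag maps whose telescoping, after passage to the $\lambda$\+chain colimits $A^\lambda$ and $B^\lambda$ (which are $\kappa$\+presentable since $\lambda<\kappa$ and $\kappa$\+presentables are closed under $\kappa$\+small colimits of the shapes present), yields mutually inverse isomorphisms $F(A^\lambda)\rightleftarrows G(B^\lambda)$ by preservation of $\lambda$\+chain colimits under $F$ and $G$. This delivers an object $(A^\lambda,B^\lambda,\theta^\lambda)\in\sD$ of the required form mapping to $(A,B,\theta)$ and absorbing the chosen~$A^0$.

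The main obstacle will be organizing these back-and-forth constructions into a single $\kappa$\+directed diagram whose colimit is $(A,B,\theta)$. Filteredness ought to follow from the regularity of $\kappa$: any $\kappa$\+small family of such sub-objects can be amalgamated and then fed back into the back-and-forth as the initial datum, the $\lambda$~correction steps staying within $\sA_{<\kappa}$ and $\sB_{<\kappa}$. Exhaustion is visible on each projection separately, since every morphism into $A$ or into $B$ from a $\kappa$\+presentable object is captured by some stage of the construction; this forces the induced map from the colimit in $\sD$ to $(A,B,\theta)$ to be an isomorphism. The characterization of $\kappa$\+presentables in $\sD$ as precisely those triples with $\kappa$\+presentable components then follows by combining the colimit presentation with the easy half of stage two, via the standard observation that $\kappa$\+presentables are exactly the retracts of objects in any $\kappa$\+directed presentation.
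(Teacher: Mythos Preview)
Your plan is correct and reproduces, in outline, exactly the back-and-forth argument that underlies the references the paper cites for this result (the paper itself gives no proof, simply referring to \cite[Theorem~2.2]{RR}, \cite[proof of Proposition~3.1]{CR}, and \cite[Corollary~5.1]{Pacc}). The three stages you describe---creation of the relevant colimits by the projections, the easy half of $\kappa$-presentability, and the $\lambda$-chain zigzag to manufacture a $\kappa$-directed system of subobjects with $\kappa$-presentable components---are precisely the ingredients of those proofs.

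Two small points worth tightening when you write it out in full. First, at each successor step of the zigzag you need not only that $G(B^n)\to F(A)$ factors through some $F(A^{n+1})$ with $A^n\to A^{n+1}$ in $\sA_{<\kappa}$, but also that the composite $F(A^n)\to G(B^n)\to F(A^{n+1})$ agrees with $F(A^n\to A^{n+1})$; this coherence is obtained by enlarging $A^{n+1}$ once more inside the $\kappa$-directed presentation of~$A$, using that $F(A^n)\in\sC_{<\kappa}$. Second, when $\lambda>\aleph_0$ you must say what happens at limit ordinals $\alpha<\lambda$: you cannot take a colimit there (only $\lambda$-chain colimits are assumed), so instead you choose $A^\alpha\in\sA_{<\kappa}$ as a common upper bound in the $\kappa$-directed presentation of $A$ for the fewer-than-$\kappa$ objects $(A^\beta)_{\beta<\alpha}$, and similarly for $B^\alpha$. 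With these two refinements the argument goes through as you describe.
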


\begin{proof}
 This is Pseudopullback Theorem~\cite[Theorem~2.2]{RR}, based
on~\cite[proof of Proposition~3.1]{CR}.
 The result goes back to the unpublished preprint~\cite[Remark~3.2(I),
Theorem~3.8, Corollary~3.9, and Remark~3.11(II)]{Ulm}.
 See also~\cite[Corollary~5.1]{Pacc}.
\end{proof}

 Now we can prove a more restricted version of the main result of
this paper.

\begin{thm} \label{flat-qcoh-on-qcomp-qsep}
 Let $X$ be a quasi-compact quasi-separated scheme.
 Then the category of flat quasi-coherent sheaves $X\qcoh_\fl$ is\/
$\aleph_1$\+accessible.
 The $\aleph_1$\+presentable objects of $X\qcoh_\fl$ are precisely all
the locally countably presentable flat quasi-coherent sheaves on~$X$.
 Hence every flat quasi-coherent sheaf on $X$ is
an\/ $\aleph_1$\+directed colimit of locally countably presentable
flat quasi-coherent sheaves.
\end{thm}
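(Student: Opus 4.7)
The plan is to realize $X\qcoh_\fl$ as an iterated pseudopullback built out of categories of flat modules over affine pieces of $X$, and then apply Proposition~\ref{pseudopullback-prop} with $\kappa = \aleph_1$ and $\lambda = \aleph_0$; the last assertion of the theorem (presenting any flat quasi-coherent sheaf as an $\aleph_1$\+directed colimit of $\aleph_1$\+presentable ones) is then just the definition of $\aleph_1$\+accessibility.

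Concretely, fix a finite affine cover $X = U_1 \cup \cdots \cup U_n$ (which exists by quasi-compactness) and induct on $n$. The case $n = 1$ is $X$ affine, which is Proposition~\ref{flat-modules-accessible} applied to $R = \O(X)$, combined with Lemma~\ref{locality-generatedness-presentability} (which identifies ``locally countably presentable'' with ``countably presentable'' over an affine scheme). For the inductive step, write $X = U \cup Y$ with $U = U_n$ affine and $Y = U_1 \cup \cdots \cup U_{n-1}$, to which the inductive hypothesis applies. Since flatness of a quasi-coherent sheaf is a local property, Lemma~\ref{glue-from-a-cover-by-two} identifies $X\qcoh_\fl$ with the pseudopullback of $U\qcoh_\fl$ and $Y\qcoh_\fl$ over $(U \cap Y)\qcoh_\fl$ along the restriction functors. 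These restriction functors preserve directed colimits (restriction along a quasi-compact open immersion in a qcqs scheme is a left adjoint on quasi-coherent sheaves, and directed colimits of flats in the ambient abelian category are flat), and they send locally countably presentable flat sheaves to locally countably presentable flat sheaves; given the inductive descriptions of $\aleph_1$\+presentable objects on $U$, $Y$, and $U \cap Y$, this means that they take $\aleph_1$\+presentable objects to $\aleph_1$\+presentable objects. Proposition~\ref{pseudopullback-prop} then delivers $\aleph_1$\+accessibility of $X\qcoh_\fl$ together with a description of its $\aleph_1$\+presentable objects as those triples $(\F_U,\F_Y,\theta)$ with both $\F_U$ and $\F_Y$ locally countably presentable flat, which under Lemma~\ref{glue-from-a-cover-by-two} correspond precisely to locally countably presentable flat sheaves on $X$.

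The main obstacle is that the intersection $U \cap Y = \bigcup_{i<n}(U \cap U_i)$, while quasi-compact (by quasi-separatedness of $X$) and quasi-separated (in fact separated, being an open subscheme of the affine $U$), is not a priori covered by fewer than $n$ affines, so the naive induction on $n$ alone is not well-founded. The fix is to first establish the theorem for quasi-compact semi-separated schemes by a parallel and strictly cleaner induction, in which one chooses the affine cover so that all pairwise intersections are affine; in that setting $U \cap Y$ inherits a semi-separating cover of size at most $n - 1$ and the induction closes directly. One then invokes this sub-theorem to handle $U \cap Y$ in the general quasi-separated step, since $U \cap Y$, being an open in the affine $U$, is automatically quasi-compact and semi-separated. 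With this two-layer induction in place, the theorem for the general quasi-compact quasi-separated case follows as a formal application of the pseudopullback theorem.
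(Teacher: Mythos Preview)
Your proposal is correct and follows essentially the same two-layer induction as the paper: first prove the semi-separated case by induction on the size of a semi-separating affine cover (so that $U\cap Y$ again has a cover of size $\le n-1$), then in the general quasi-separated case use that $U\cap Y$, being open in the affine $U$, is quasi-compact semi-separated and falls under the first step. The verification details you spell out (restriction functors preserving directed colimits and $\aleph_1$\+presentables) are exactly what the paper leaves implicit when invoking Proposition~\ref{pseudopullback-prop}.
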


\begin{proof}
 The proof proceeds in two steps.
 On the first step, we assume that the scheme $X$ is semi-separated.
 Then we argue by induction on the number~$d$ of affine open
subschemes in an affine open covering $X=\bigcup_{\alpha=1}^d U_\alpha$
of the scheme~$X$.
 The affine case $d=1$ holds by
Proposition~\ref{flat-modules-accessible}.

 For $d>1$, put $U=U_d$ and $V=\bigcup_{\alpha=1}^{d-1}U_\alpha$.
 Then $U\cap V=\bigcup_{\alpha=1}^{d-1}U_d\cap U_\alpha$ is
an open covering of the intersection $U\cap V$ by $d-1$ affine
open subschemes.
 By the induction assumption, we know the assertion of the theorem
to hold for all the three schemes $U$, $V$, and $U\cap V$.
 By Lemma~\ref{glue-from-a-cover-by-two} (restricted to the full
subcategories of flat quasi-coherent sheaves), the category
$\sD=X\qcoh_\fl$ is the pseudopullback of the pair of categories
$\sA=U\qcoh_\fl$ and $\sB=V\qcoh_\fl$ over the category
$\sC=(U\cap\nobreak V)\qcoh_\fl$ (with respect to the functors of
restriction of quasi-coherent sheaves to open subschemes).
 It remains to apply Proposition~\ref{pseudopullback-prop}
(for $\kappa=\aleph_1$ and $\lambda=\aleph_0$).

 On the second step, we consider an arbitrary quasi-compact
quasi-separated scheme~$X$.
 Once again, we argue by induction on the number~$d$ of affine open
subschemes in an affine open covering $X=\bigcup_{\alpha=1}^d U_\alpha$,
and put $U=U_d$ and $V=\bigcup_{\alpha=1}^{d-1}U_\alpha$.
 Then $U\cap V=\bigcup_{\alpha=1}^{d-1}U_d\cap U_\alpha$ is
a quasi-compact, semi-separated scheme (as it is an open subscheme
in an affine scheme~$U$).
 By the induction assumption and by the first step of this proof,
we know the assertion of the theorem to hold for all the three schemes
$U$, $V$, and $U\cap V$.
 Once again, it remains to apply Proposition~\ref{pseudopullback-prop}
(for $\kappa=\aleph_1$ and $\lambda=\aleph_0$) together with
Lemma~\ref{glue-from-a-cover-by-two} (restricted to the categories
of flat quasi-coherent sheaves).
\end{proof}

\Section{Flat Quasi-Coherent Sheaves as Directed Colimits~II}
\label{flat-sheaves-on-countably-qc-qs-secn}

 The following lemma spells out the gluing of quasi-coherent sheaves
from an affine open covering of an arbitrary scheme.
 Let $X$ be a scheme and $X=\bigcup_{\alpha\in\Delta} U_\alpha$ be its
affine open covering, indexed by a set of indices~$\Delta$.
 The rule $\beta\le\alpha\in\Delta$ if $U_\beta\subset U_\alpha$
defines a partial (pre)order on~$\Delta$.
 Introduce the notation $R_\alpha=\O(U_\alpha)$ for the ring of
functions on the affine scheme $U_\alpha$, \,$\alpha\in\Delta$.

\begin{lem} \label{glue-from-affine-covering}
 Let $X$ be a scheme with a chosen affine open covering as per
the notation above.
 Assume that for all $\alpha$, $\beta\in\Delta$ we have
$U_\alpha\cap U_\beta=\bigcup_{\gamma\le\alpha}^{\gamma\le\beta}
U_\gamma$.
 Then the category of quasi-coherent sheaves $\M$ on $X$ is equivalent
to the category formed by the following sets of data:
\begin{enumerate}
\item for every\/ $\alpha\in\Delta$, an $R_\alpha$\+module
$M_\alpha=\M(U_\alpha)$ is given; and
\item for every\/ $\beta\le\alpha\in\Delta$, an isomorphism of
$R_\beta$\+modules (``gluing datum'')
$$
 m_\beta^\alpha\:R_\beta\ot_{R_\alpha}M_\alpha
 \overset\simeq\lrarrow M_\beta
$$
is given
\end{enumerate}
satisfying the following equations:
\begin{enumerate}
\setcounter{enumi}{2}
\item for all\/ $\gamma\le\alpha\le\beta\in\Delta$, the triangular
diagram of isomorphisms of $R_\alpha$\+modules
$$
 \xymatrix{
  R_\gamma\ot_{R_\alpha}M_\alpha
  \ar@{=}[rr]^-{m_\gamma^\alpha}
  \ar@{=}[rd]_-{R_\gamma\ot_{R_\beta}m_\beta^\alpha\qquad}
  & & M_\gamma \\
  & R_\gamma\ot_{R_\beta}M_\beta
  \ar@{=}[ru]_-{m_\gamma^\beta}
 }
$$
is commutative.
\end{enumerate}
\end{lem}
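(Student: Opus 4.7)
The plan is to construct a functor in each direction and prove they are mutually quasi-inverse. The forward direction from $X\qcoh$ to the category of gluing data is essentially tautological: to a quasi-coherent sheaf $\M$ I assign $M_\alpha=\M(U_\alpha)$, equipped with the canonical isomorphisms $m_\beta^\alpha\:R_\beta\ot_{R_\alpha}\M(U_\alpha)\overset{\simeq}\rarrow\M(U_\beta)$ that express the restriction of a quasi-coherent sheaf along the affine open inclusion $U_\beta\subset U_\alpha$. Transitivity of restriction yields the triangular identity~(3) immediately.

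For the inverse direction, the plan is to assemble a datum $(M_\alpha,m_\beta^\alpha)$ into a quasi-coherent sheaf by gluing the affine pieces $\widetilde{M_\alpha}$ on $U_\alpha$ along transition isomorphisms on the overlaps. For each pair $\alpha,\beta\in\Delta$ I need an isomorphism $\phi_{\alpha\beta}\:\widetilde{M_\alpha}|_{U_\alpha\cap U_\beta}\simeq\widetilde{M_\beta}|_{U_\alpha\cap U_\beta}$ on the (possibly non-affine) intersection; since $U_\alpha\cap U_\beta$ need not itself belong to the covering family, this must be constructed locally. On each $U_\gamma$ with $\gamma\le\alpha$ and $\gamma\le\beta$, the isomorphisms $m_\gamma^\alpha$ and $m_\gamma^\beta$ supply a composite $\widetilde{M_\alpha}|_{U_\gamma}\simeq\widetilde{M_\gamma}\simeq\widetilde{M_\beta}|_{U_\gamma}$, and these local comparisons are to be patched into $\phi_{\alpha\beta}$ by means of the hypothesis $U_\alpha\cap U_\beta=\bigcup_{\gamma\le\alpha,\,\gamma\le\beta}U_\gamma$.

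The hard part will be making this patching rigorous: the local isomorphisms defined on $U_\gamma$ and $U_{\gamma'}$ must be shown to agree on the overlap $U_\gamma\cap U_{\gamma'}$, which again need not belong to the distinguished family and must itself be covered by the hypothesis, until the agreement reduces to an instance of the cocycle relation~(3). Once $\phi_{\alpha\beta}$ is constructed and shown to be unique, an analogous refinement argument establishes the triple-overlap identity $\phi_{\beta\gamma}\circ\phi_{\alpha\beta}=\phi_{\alpha\gamma}$ on $U_\alpha\cap U_\beta\cap U_\gamma$ by further applications of~(3). Standard sheaf gluing then produces a sheaf $\M$ on $X$, which is quasi-coherent because $\M|_{U_\alpha}\simeq\widetilde{M_\alpha}$ for each~$\alpha$. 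The check that the two constructions are mutually quasi-inverse, and that morphisms transport correctly in both directions, reduces to the affine equivalence between $R_\alpha$\+modules and quasi-coherent sheaves on $U_\alpha$ together with the fact that morphisms of quasi-coherent sheaves on $X$ are determined by their restrictions to any affine open covering.
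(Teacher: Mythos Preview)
Your proof is correct and follows the classical ``sheaf gluing'' template: build each $\widetilde{M_\alpha}$ on $U_\alpha$, manufacture transition isomorphisms $\phi_{\alpha\beta}$ on the pairwise overlaps by patching the local pieces $(m_\gamma^\beta)^{-1}\circ m_\gamma^\alpha$ over $U_\gamma$ with $\gamma\le\alpha,\beta$, and then verify the triple-overlap cocycle. Your recursive descent terminates after one level: on $U_\gamma\cap U_{\gamma'}=\bigcup_{\delta\le\gamma,\gamma'}U_\delta$, both local isomorphisms restrict to $(m_\delta^\beta)^{-1}\circ m_\delta^\alpha$ by two applications of~(3), so no infinite regress occurs.

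The paper's argument is organized differently. Rather than gluing sheaves from an open cover, it invokes the EGA-style description of a quasi-coherent sheaf as a compatible system of modules on affine opens \emph{subordinate} to the given covering: for each affine $V\subset U_\alpha$ one sets $\M(V)=\O(V)\otimes_{R_\alpha}M_\alpha$, and the restriction maps are assembled from this. The independence of the choice of~$\alpha$ is then handled not by an explicit cocycle computation but by first establishing full faithfulness of the forward functor and then applying it to the subscheme $U_\alpha\cap U_\beta$ with its own covering $\{U_\gamma\}_{\gamma\le\alpha,\beta}$: this immediately yields the needed isomorphism $\M_\alpha|_{U_\alpha\cap U_\beta}\simeq\M_\beta|_{U_\alpha\cap U_\beta}$. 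This bootstrap avoids the hand verification of patching and cocycle conditions that you carry out; on the other hand, your approach is more self-contained and does not rely on the ``sheaf on a basis'' formalism.
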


\begin{proof}
 The rule $M_\alpha=\M(U_\alpha)$ defines a functor from $X\qcoh$
to the category of sets of data~(1\<3).
 It is clear that this functor is fully faithful.
 To recover a quasi-coherent sheaf on $X$ from a set of data~(1\<3),
notice that it suffices to define a quasi-coherent sheaf on $X$ on
affine open subschemes $V\subset X$ subordinate to any given covering
$X=\bigcup_{\alpha\in\Delta} U_\alpha$, that is, $V$ for which there
exists $\alpha\in\Delta$ with $V\subset U_\alpha$
\,\cite[Section~0.3.2]{EGA1} (cf.~\cite[Section~2]{EE}).
 If this is the case, put $\M(V)=\O(V)\ot_{R_\alpha}M_\alpha$.

 Given an affine open subscheme $W\subset V$ and an index
$\beta\in\Delta$ with $W\subset U_\beta$, an isomorphism of
$\O(W)$\+modules
$$
 \O(W)\ot_{\O(V)}\M(V)=\O(W)\ot_{R_\alpha}M_\alpha
 \simeq\O(W)\ot_{R_\beta}M_\beta=\M(W)
$$
can be constructed using the inclusion $W\subset U_\alpha\cap U_\beta$
and the full-and-faithfulness assertion above applied to the affine
open covering $U_\alpha\cap U_\beta=
\bigcup_{\gamma\le\alpha}^{\gamma\le\beta}U_\gamma$ of
the scheme $U_\alpha\cap U_\beta$.
 The point is that, denoting by $\M_\alpha$ the quasi-coherent sheaf
on $U_\alpha$ corresponding to the $R_\alpha$\+module $M_\alpha$ and
by $\M_\beta$ the quasi-coherent sheaf on $U_\beta$ corresponding to
the $R_\beta$\+module $M_\beta$, the quasi-coherent sheaves
$\M_\alpha|_{U_\alpha\cap U_\beta}$ and $\M_\beta|_{U\alpha\cap
U_\beta}$ on the scheme $U_\alpha\cap U_\beta$ are naturally
isomorphic, because so are the related sets of data indexed by
$\gamma\in\Delta$ with $\gamma\le\alpha$, \,$\gamma\le\beta$.
\end{proof}

 The following category-theoretic result is easy, but one has to be
careful.

\begin{lem} \label{cartesian-product-lemma}
 Let $\kappa$~be a regular cardinal and $(\sA_\xi)_{\xi\in\Xi}$ be
a family of $\kappa$\+accessible categories, indexed by a set\/ $\Xi$
of the cardinality smaller than~$\kappa$.
 Then the Cartesian product\/ $\sA=\prod_{\xi\in\Xi}\sA_\xi$ of
the categories\/ $\sA_\xi$ is also a $\kappa$\+accessible category,
and the $\kappa$\+presentable objects of\/ $\sA$ are precisely all
the collections of objects $(A_\xi\in\sA_\xi)_{\xi\in\Xi}$ with
$A_\xi\in(\sA_\xi)_{<\kappa}$ for every\/ $\xi\in\Xi$.
\end{lem}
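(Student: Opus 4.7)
The plan is to verify both the existence of $\kappa$-directed colimits in $\sA$ and the existence of a sufficient set of $\kappa$-presentable generators, while simultaneously pinning down the $\kappa$-presentable objects as described. The hypothesis $|\Xi|<\kappa$ enters essentially in exactly one place: it ensures that a $\kappa$-small product of functors preserving $\kappa$-directed colimits into $\Sets$ (or $\Ab$) still preserves them, which in turn amounts to the standard fact that $\kappa$-directed colimits commute with $\kappa$-small limits in $\Sets$.

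First I would note that in $\sA=\prod_{\xi\in\Xi}\sA_\xi$ every (co)limit that can be computed componentwise exists and is so computed; in particular $\sA$ inherits $\kappa$-directed colimits from its factors, and each projection $p_\xi\:\sA\rarrow\sA_\xi$ preserves them. For the identification of $\kappa$-presentable objects, the ``if'' direction would follow from the natural identification
\[
 \Hom_\sA\bigl((A_\xi)_\xi,\,{-}\bigr)\;\simeq\;
 \prod_{\xi\in\Xi}\Hom_{\sA_\xi}\bigl(A_\xi,\,p_\xi({-})\bigr)
\]
together with the cardinality-bounded commutation recalled above; this is the step that genuinely uses $|\Xi|<\kappa$. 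For the ``only if'' direction, given $A=(A_\xi)_\xi\in\sA_{<\kappa}$ and a fixed index $\eta\in\Xi$, I would lift any morphism from $A_\eta$ to the colimit of a $\kappa$-directed diagram $(B_i)_{i\in I}$ in $\sA_\eta$ to a morphism in $\sA$ by using the constant diagram on $A_\xi$ in every component $\xi\neq\eta$; a $\kappa$-directed poset is non-empty and connected, so the lifted diagram has the expected componentwise colimit, and $\kappa$-presentability of $A$ forces a factorization through some index~$i$ whose $\eta$-component supplies the required factorization, with the uniqueness half of $\kappa$-presentability handled by an analogous lifting.

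To conclude $\kappa$-accessibility of $\sA$, I would pick for every $\xi$ a $\kappa$-directed diagram $(P^\xi_i)_{i\in I_\xi}$ of $\kappa$-presentable objects in $\sA_\xi$ with colimit $A_\xi$ and form the product poset $I=\prod_{\xi\in\Xi}I_\xi$ with the componentwise order. A $\kappa$-small family in $I$ has $\kappa$-small projections to each $I_\xi$, and componentwise upper bounds in the $\kappa$-directed factors glue to an upper bound in $I$, so $I$ is $\kappa$-directed; moreover each projection $I\rarrow I_\xi$ is cofinal, so the componentwise colimit of the diagram $(i_\xi)_\xi\mapsto(P^\xi_{i_\xi})_\xi$ in $\sA$ is precisely $A$. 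Each such $(P^\xi_{i_\xi})_\xi$ is $\kappa$-presentable in $\sA$ by the ``if'' direction already established, and a set of $\kappa$-presentable generators for $\sA$ is obtained as the product of sets of isomorphism class representatives of $\kappa$-presentable objects in each $\sA_\xi$.

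The main point to be careful about — and presumably the warning implicit in the statement — is that the ``if'' direction of the characterization of $\kappa$-presentable objects, which is precisely what allows one to pass from componentwise $\kappa$-accessibility to $\kappa$-accessibility of the product, really does require $|\Xi|<\kappa$: a genuinely infinite product of finitely presentable objects is typically not finitely presentable in the product category, so the cardinality bound on $\Xi$ cannot be dropped and must be tracked through both halves of the characterization.
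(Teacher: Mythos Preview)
Your proof is correct and complete. The paper itself does not give a direct argument: it simply notes that the assertion appears (with the cardinality bound on $\Xi$ missing) as \cite[Proposition~2.67]{AR} and refers to \cite[Proposition~2.1]{Pacc} for details. Your write-up is therefore more self-contained than the paper's own treatment; the argument you give---componentwise colimits, commutation of $\kappa$-directed colimits with $\kappa$-small products in $\Sets$ for the ``if'' direction, the constant-diagram trick for the ``only if'' direction, and the product-poset construction for accessibility---is essentially the standard one underlying those references, and you have correctly isolated the single step where the hypothesis $|\Xi|<\kappa$ is genuinely needed.
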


\begin{proof}
 This assertion appears in~\cite[Proposition~2.67]{AR}, but with
the condition on the cardinality of $\Xi$ missing.
 For details, see~\cite[Proposition~2.1]{Pacc}.
\end{proof}

 Let $\sA$ and $\sB$ be two categories, and let $P\:\sA\rarrow\sB$
and $Q\:\sA\rarrow\sB$ be a pair of parallel functors.
 The \emph{isomorpher}~\cite[Remark~5.2]{Pacc} of the two functors
$P$ and $Q$ is defined as the category $\sD$ of all pairs $(A,\theta)$,
where $A\in\sA$ is an object and $\theta\:P(A)\simeq Q(A)$ is
an isomorphism in~$\sB$.

\begin{prop} \label{isomorpher-prop}
 Let $\kappa$~be a regular cardinal, and $\lambda<\kappa$ be a smaller
infinite cardinal.
 Let\/ $\sA$ and\/ $\sB$ be $\kappa$\+accessible categories in
which the colimits of all $\lambda$\+indexed chains exist.
 Let $P\:\sA\rarrow\sB$ and $Q\:\sA\rarrow\sB$ be two functors
preserving $\kappa$\+directed colimits and colimits of
$\lambda$\+indexed chains, and taking $\kappa$\+presentable objects to
$\kappa$\+presentable objects.
 Then the isomorpher\/ $\sD$ of the pair of functors $P$ and $Q$ is
a $\kappa$\+accessible category, and the $\kappa$\+presentable objects
of\/ $\sD$ are precisely all the objects $(A,\theta)\in\sD$ with
$A\in\sA_{<\kappa}$.
\end{prop}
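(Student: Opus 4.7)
The plan is to reduce this to the Pseudopullback Proposition~\ref{pseudopullback-prop} by exhibiting the isomorpher as a pseudopullback of a suitable cospan. The key observation is that an isomorphism $\theta\:P(A)\simeq Q(A)$ is the same data (up to harmless reindexing) as a choice of object $B\in\sB$ together with two isomorphisms $\phi_1\:P(A)\simeq B$ and $\phi_2\:Q(A)\simeq B$; equivalently, a single isomorphism $(\phi_1,\phi_2)\:(P(A),Q(A))\simeq(B,B)$ in $\sB\times\sB$.

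Concretely, I would form the product category $\sB\times\sB$, which by Lemma~\ref{cartesian-product-lemma} (applied to the index set $\{1,2\}$, of cardinality~$2<\kappa$) is itself $\kappa$\+accessible, with $\kappa$\+presentable objects being the pairs of $\kappa$\+presentable objects, and with $\lambda$\+indexed chain colimits computed componentwise. Then I would consider the pair of functors
\[
 (P,Q)\:\sA\rarrow\sB\times\sB,\quad A\longmapsto(P(A),Q(A)),
 \qquad
 \Delta\:\sB\rarrow\sB\times\sB,\quad B\longmapsto(B,B).
\]
Both functors preserve $\kappa$\+directed colimits and $\lambda$\+indexed chain colimits, and send $\kappa$\+presentable objects to $\kappa$\+presentable objects, because colimits and $\kappa$\+presentability in $\sB\times\sB$ are componentwise and the corresponding assumptions hold for $P$, $Q$, and (trivially) for the identity on~$\sB$. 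Therefore Proposition~\ref{pseudopullback-prop} applies: the pseudopullback $\sE$ of $(P,Q)$ and $\Delta$ is $\kappa$\+accessible, and its $\kappa$\+presentable objects are exactly the triples $(A,B,(\phi_1,\phi_2))$ with $A\in\sA_{<\kappa}$ and $B\in\sB_{<\kappa}$.

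The final step is to produce an equivalence of categories $\sE\simeq\sD$. I would define $\sE\rarrow\sD$ by $(A,B,(\phi_1,\phi_2))\mapsto(A,\phi_2^{-1}\phi_1)$ on objects and by projecting morphisms onto their $\sA$\+component, and $\sD\rarrow\sE$ by $(A,\theta)\mapsto(A,Q(A),(\theta,\operatorname{id}_{Q(A)}))$. Full-faithfulness of the first functor is a direct calculation: given $f\:A\to A'$ satisfying the isomorpher condition, the required partner $g\:B\to B'$ in the pseudopullback is forced by $g=\phi_1'\circ P(f)\circ\phi_1^{-1}$, and the two possible formulas for $g$ agree precisely by the isomorpher condition; essential surjectivity is witnessed by the explicit inverse functor.

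Finally, to identify the $\kappa$\+presentable objects of $\sD$, I transport the description from $\sE$ through the equivalence: an object $(A,\theta)\in\sD$ corresponds to $(A,Q(A),(\theta,\operatorname{id}))\in\sE$, which is $\kappa$\+presentable iff $A\in\sA_{<\kappa}$ and $Q(A)\in\sB_{<\kappa}$. The second condition is automatic from the first, since by hypothesis $Q$ takes $\kappa$\+presentables to $\kappa$\+presentables. Hence the $\kappa$\+presentable objects of $\sD$ are exactly the $(A,\theta)$ with $A\in\sA_{<\kappa}$, as claimed. I do not anticipate a serious obstacle: the conceptual content is entirely the reduction to a pseudopullback, and once that reduction is in place every verification is routine, the only mild subtlety being to remember that Lemma~\ref{cartesian-product-lemma} needs the small index set and that $\{1,2\}$ easily satisfies this for any regular~$\kappa$.
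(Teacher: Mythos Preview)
Your proposal is correct and follows essentially the same approach as the paper, which simply records that the result is a corollary of Proposition~\ref{pseudopullback-prop} and defers the details to~\cite[Remark~5.2]{Pacc}. You have spelled out precisely the intended reduction---realizing the isomorpher as the pseudopullback of $(P,Q)\:\sA\rarrow\sB\times\sB$ and the diagonal $\Delta\:\sB\rarrow\sB\times\sB$---and all the routine verifications you outline go through without issue.
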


\begin{proof}
 This is a corollary of Proposition~\ref{pseudopullback-prop}, as
explained in~\cite[Remark~5.2]{Pacc}.
\end{proof}

 Let $\sK$ and $\sL$ be two categories, let $F$, $G\:\sK
\rightrightarrows\sL$ be a pair of parallel functors, and let
$\phi$, $\psi\:F\rightrightarrows G$ be a pair of parallel natural 
transformations.
 The \emph{equifier}~\cite[Lemma~2.76]{AR} of the pair of natural
transformations $\phi$ and~$\psi$ is the full subcategory
$\sE\subset\sK$ consisting of all the objects $E\in\sK$ for which
the two morphisms $\phi_E\:F(E)\rarrow G(E)$ and $\psi_E\:F(E)
\rarrow G(E)$ in $\sL$ are equal to each other, that is
$\phi_E=\psi_E$.

\begin{prop} \label{equifier-prop}
 Let $\kappa$~be a regular cardinal, and $\lambda<\kappa$ be a smaller
infinite cardinal.
 Let\/ $\sK$ and\/ $\sL$ be $\kappa$\+accessible categories in
which the colimits of all $\lambda$\+indexed chains exist.
 Let $F\:\sK\rarrow\sL$ and $G\:\sK\rarrow\sL$ be two functors
preserving $\kappa$\+directed colimits and colimits of
$\lambda$\+indexed chains; assume further that the functor $F$ takes
$\kappa$\+presentable objects to $\kappa$\+presentable objects.
 Let $\phi\:F\rarrow G$ and $\psi\:F\rarrow G$ be two natural
transformations.
 Then the equifier\/ $\sE$ of the pair of natural transformations $\phi$
and~$\psi$ is a $\kappa$\+accessible category, and
the $\kappa$\+presentable objects of\/ $\sE$ are precisely all
the objects of\/ $\sE$ that are $\kappa$\+presentable in\/~$\sK$.
\end{prop}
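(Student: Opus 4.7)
My approach is to verify the $\kappa$-accessibility of $\sE$ and its description of $\kappa$-presentable objects directly, rather than realize $\sE$ as a pseudopullback. The natural pseudopullback presentation of the equifier---as the pullback of $\sK\to\sL^{\mathbf{2}}$, $K\mapsto(\phi_K,\psi_K)$, along the diagonal $\sL^{\to}\to\sL^{\mathbf{2}}$, $h\mapsto(h,h)$---would require that both $F$ and $G$ carry $\kappa$-presentable objects to $\kappa$-presentable ones, whereas here only $F$ is assumed to do so, so Proposition~\ref{pseudopullback-prop} does not apply off the shelf.

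I would first observe that $\sE\subseteq\sK$ is full and closed under $\kappa$-directed colimits, $\lambda$-indexed chain colimits, and retracts. Closure under colimits follows from their preservation by $F$ and $G$: if $E$ is such a colimit of objects $E_i\in\sE$, then $\phi_E=\psi_E$ since both coincide with the induced colimit of the equal morphisms $\phi_{E_i}=\psi_{E_i}$. Closure under retracts is a direct check using naturality of $\phi$ and $\psi$. Together with fullness of $\sE\hookrightarrow\sK$, closure under $\kappa$-directed colimits immediately yields that every $L\in\sE\cap\sK_{<\kappa}$ is $\kappa$-presentable in~$\sE$.

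The key step is to show every $E\in\sE$ is a $\kappa$-directed colimit of objects of $\sE\cap\sK_{<\kappa}$. It suffices to prove that any morphism $L\to E$ with $L\in\sK_{<\kappa}$ admits a factorization $L\to L^*\to E$ through some $L^*\in\sE\cap\sK_{<\kappa}$. I would construct $L^*$ as the colimit of a $\lambda$-indexed chain $L=L^{(0)}\to L^{(1)}\to\cdots$ of $\kappa$-presentable objects of $\sK$ equipped with compatible maps to~$E$. At a successor stage, naturality combined with $\phi_E=\psi_E$ gives
\[
 G(L^{(\alpha)}\to E)\circ\phi_{L^{(\alpha)}}=G(L^{(\alpha)}\to E)\circ\psi_{L^{(\alpha)}}
\]
as morphisms $F(L^{(\alpha)})\to G(E)$. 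Writing $E$ as a $\kappa$-directed colimit of objects of $\sK_{<\kappa}$ and invoking $\kappa$-presentability of $F(L^{(\alpha)})$---the decisive use of the hypothesis on $F$---the equality is witnessed at some intermediate $L^{(\alpha+1)}\in\sK_{<\kappa}$, with $L^{(\alpha)}\to L^{(\alpha+1)}\to E$ satisfying $G(L^{(\alpha)}\to L^{(\alpha+1)})\circ\phi_{L^{(\alpha)}}=G(L^{(\alpha)}\to L^{(\alpha+1)})\circ\psi_{L^{(\alpha)}}$. At limit stages one takes the chain colimit, which remains $\kappa$-presentable since $\lambda<\kappa$. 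The terminal object $L^*=L^{(\lambda)}$ lies in $\sE$ because each stage's obstruction is killed at the next, hence vanishes in the colimit.

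The main obstacle is this iterative construction and the asymmetry it exploits: the obstructions are eliminated purely on the source side, via $\kappa$-presentability of $F(L^{(\alpha)})$, so no assumption on $G$ beyond colimit preservation is required. The factorization property then upgrades straightforwardly into a $\kappa$-filtered presentation of $E$ by objects of $\sE\cap\sK_{<\kappa}$, and closure of $\sE$ under retracts ensures the resulting class of $\kappa$-presentables is exactly $\sE\cap\sK_{<\kappa}$, completing the verification.
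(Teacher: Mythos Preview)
The paper does not prove this proposition; it simply cites \cite[Theorem~3.1]{Pacc}. Your direct argument is correct and is essentially what that reference contains: reflect the equifier condition down along a chain of $\kappa$-presentables, using $\kappa$-presentability of $F(L^{(\alpha)})$ at each successor step to witness the equality $G(L^{(\alpha)}\to E)\circ\phi_{L^{(\alpha)}}=G(L^{(\alpha)}\to E)\circ\psi_{L^{(\alpha)}}$ at a finite stage, then conclude by cofinality.

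One small wrinkle worth noting: at a limit ordinal $\beta<\lambda$ you write ``one takes the chain colimit,'' but the hypotheses only guarantee colimits of $\lambda$-indexed chains, not of chains indexed by smaller ordinals. The fix is easy---at such a stage take $L^{(\beta)}$ to be any upper bound of the $\kappa$-small family $(L^{(\alpha)}\to E)_{\alpha<\beta}$ in the $\kappa$-filtered category $\sK_{<\kappa}/E$; only the successor-step equations $G(L^{(\alpha)}\to L^{(\alpha+1)})\circ\phi_{L^{(\alpha)}}=G(L^{(\alpha)}\to L^{(\alpha+1)})\circ\psi_{L^{(\alpha)}}$ are needed to verify $\phi_{L^{(\lambda)}}=\psi_{L^{(\lambda)}}$ at the terminal $\lambda$-indexed colimit. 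In the paper's applications $\lambda=\aleph_0$, so the point is vacuous there.
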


\begin{proof}
 This is~\cite[Theorem~3.1]{Pacc}.
 The result goes back to the unpublished preprint~\cite[Theorem~3.8,
Corollary~3.9, and Remark~3.11(II)]{Ulm}.
\end{proof}

 Let $X$ be a scheme.
 We will say that $X$ is \emph{countably quasi-compact} if every open
covering of $X$ contains an at most countable subcovering.
 Equivalently, a scheme is countably quasi-compact if and only if it
can be covered by (at most) countably many affine open subschemes.

 Let $f\:Y\rarrow X$ be a morphism of schemes.
 We will say that the morphism~$f$ is \emph{countably quasi-compact}
if, for every countably quasi-compact open subscheme $U\subset X$,
the preimage $f^{-1}(U)\subset Y$ is a countably quasi-compact scheme.
 Equivalently, $f$~is countably quasi-compact if and only if, for
every affine open subscheme $U\subset X$, the scheme $f^{-1}(U)$ is
countably quasi-compact.
 It suffices to check this condition for open subschemes $U_\alpha
\subset X$ belonging to any fixed affine open covering of~$X$.

 We will say that a scheme $X$ is \emph{countably quasi-separated} if
the diagonal morphism to the Cartesian product $X\rarrow
X\times_{\Spec\boZ} X$ is countably quasi-compact.
 Equivalently, $X$ is countably quasi-separated if and only if
the intersection of any two countably quasi-compact open subschemes
in $X$ is countably quasi-compact, and if and only if the intersection
of any two affine open subschemes in $X$ is countably quasi-compact.

 The following theorem is the full version of the main result of
this paper.

\begin{thm} \label{flat-qcoh-on-countably-qcomp-qsep}
 Let $X$ be a countably quasi-compact, countably quasi-separated scheme.
 Then the category of flat quasi-coherent sheaves $X\qcoh_\fl$ is\/
$\aleph_1$\+accessible.
 The $\aleph_1$\+presentable objects of $X\qcoh_\fl$ are precisely all
the locally countably presentable flat quasi-coherent sheaves on~$X$.
 So every flat quasi-coherent sheaf on $X$ is
an\/ $\aleph_1$\+directed colimit of locally countably presentable
flat quasi-coherent sheaves.
\end{thm}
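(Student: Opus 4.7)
The plan is to imitate the strategy of Theorem~\ref{flat-qcoh-on-qcomp-qsep}, but to replace the finite induction on the affine covering by a single ``all at once'' categorical construction indexed by a countable set. This works because the target property is $\aleph_1$\+accessibility rather than finite accessibility, and Lemma~\ref{cartesian-product-lemma} tolerates index sets of cardinality below $\aleph_1$.

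\emph{Setup.} Choose a countable affine open covering $X=\bigcup_{\alpha\in\Delta_0}U_\alpha$ using countable quasi-compactness. Countable quasi-separatedness makes each intersection $U_\alpha\cap U_\beta$ countably quasi-compact, hence coverable by countably many affine opens contained in $U_\alpha\cap U_\beta$. Augmenting $\Delta_0$ by all such affines gives a countable index set $\Delta$, equipped with the preorder $\beta\le\alpha$ iff $U_\beta\subset U_\alpha$, arranged so that the hypothesis $U_\alpha\cap U_\beta=\bigcup_{\gamma\le\alpha}^{\gamma\le\beta}U_\gamma$ of Lemma~\ref{glue-from-affine-covering} holds. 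Thus $X\qcoh_\fl$ becomes equivalent to the category $\sD$ of flat gluing data over~$\Delta$: collections $(M_\alpha)_{\alpha\in\Delta}$ of flat modules over $R_\alpha=\O(U_\alpha)$, together with gluing isomorphisms $m_\beta^\alpha\colon R_\beta\ot_{R_\alpha}M_\alpha\simeq M_\beta$ for $\beta\le\alpha$ satisfying the triangular cocycle condition.

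\emph{Assembly.} Start with the Cartesian product $\sA=\prod_{\alpha\in\Delta}R_\alpha\Modl_\fl$; since $|\Delta|<\aleph_1$, this is $\aleph_1$\+accessible by Lemma~\ref{cartesian-product-lemma} combined with Proposition~\ref{flat-modules-accessible}, with $\aleph_1$\+presentable objects the componentwise countably presentable collections. Define parallel functors $P,Q\colon\sA\to\sB$, with $\sB=\prod_{\beta\le\alpha}R_\beta\Modl$ again a countable product, by $P((M_\ast))_{(\beta,\alpha)}=R_\beta\ot_{R_\alpha}M_\alpha$ and $Q((M_\ast))_{(\beta,\alpha)}=M_\beta$. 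Both functors preserve all colimits and take $\aleph_1$\+presentables to $\aleph_1$\+presentables (extension of scalars preserves countable presentability, then apply Lemma~\ref{cartesian-product-lemma} to $\sB$), so Proposition~\ref{isomorpher-prop} yields that the isomorpher $\sD'$ is $\aleph_1$\+accessible with the expected $\aleph_1$\+presentable objects. For the cocycle condition, set $\sL=\prod_{\gamma\le\beta\le\alpha}R_\gamma\Modl$ (still a countable product) and define $F,G\colon\sD'\to\sL$ by $F(A)_{(\gamma,\beta,\alpha)}=R_\gamma\ot_{R_\alpha}M_\alpha$ and $G(A)_{(\gamma,\beta,\alpha)}=M_\gamma$; the two natural transformations $\phi,\psi\colon F\Rightarrow G$ given by $\phi_{(\gamma,\beta,\alpha)}=m_\gamma^\alpha$ and $\psi_{(\gamma,\beta,\alpha)}=m_\gamma^\beta\circ(R_\gamma\ot_{R_\beta}m_\beta^\alpha)$ have as their equifier exactly the subcategory of $\sD'$ satisfying the cocycle condition, which by Lemma~\ref{glue-from-affine-covering} is $X\qcoh_\fl$. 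Proposition~\ref{equifier-prop} then applies since $F$ preserves colimits and $\aleph_1$\+presentable objects.

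\emph{From $\Delta$\+local to genuinely local.} The above identifies the $\aleph_1$\+presentable flat quasi-coherent sheaves on $X$ as those $\F$ whose sections $\F(U_\alpha)$ are countably presentable over $R_\alpha$ for every $\alpha\in\Delta$. To upgrade this to the countable presentability of $\F(W)$ for an arbitrary affine open $W\subset X$, cover $W$ by finitely many distinguished affine opens $D_i\subset W$, each contained in some $U_{\alpha_i}$ (possible by quasi-compactness of $W$, since basic opens of $W$ form a base); then $\F(D_i)=\O(D_i)\ot_{\O(U_{\alpha_i})}\F(U_{\alpha_i})$ is countably presentable because extension of scalars preserves countable presentability, and Lemma~\ref{locality-generatedness-presentability}(d) applied to the finite covering $W=\bigcup_iD_i$ gives countable presentability of $\F(W)$. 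The main obstacle is cardinal bookkeeping: the countability of $\Delta$ and of its pair and triple index sets is precisely what Lemma~\ref{cartesian-product-lemma} requires at each product step, so the argument is delicately calibrated to the cardinal $\aleph_1$.
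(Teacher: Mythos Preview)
Your proof is correct and follows essentially the same route as the paper: build the countable index set $\Delta$ satisfying the intersection hypothesis of Lemma~\ref{glue-from-affine-covering}, realize $X\qcoh_\fl$ as an equifier of an isomorpher of countable products of flat module categories, and invoke Propositions~\ref{isomorpher-prop} and~\ref{equifier-prop} with $\kappa=\aleph_1$, $\lambda=\aleph_0$. The only cosmetic differences are that the paper takes the target categories $\sB$ and $\sL$ to be products of \emph{flat} module categories (which is harmless, since both $P$ and $Q$ land in flat modules anyway), and that you spell out the passage from ``$\F(U_\alpha)$ countably presentable for all $\alpha\in\Delta$'' to ``$\F(W)$ countably presentable for every affine open $W$'' via Lemma~\ref{locality-generatedness-presentability}(d), a step the paper leaves implicit in its earlier remark that local countable presentability can be tested on any chosen affine covering.
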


\begin{proof}
 An easy inductive argument produces a countable affine open covering
$X=\bigcup_{\alpha\in\Delta}U_\alpha$ of the scheme $X$ such that
$U_\alpha\cap U_\beta=\bigcup_{\gamma\le\alpha}^{\gamma\le\beta}
U_\gamma$ for all $\alpha$, $\beta\in\Delta$.
 Then Lemma~\ref{glue-from-affine-covering} is applicable.
 We are interested in flat quasi-coherent sheaves; so we will
consider sets of data $(M_\alpha,m_\beta^\alpha)$ as in the lemma
with flat $R_\alpha$\+modules~$M_\alpha$.

 Let us first deal with the category of sets of data~(1\<2),
disregarding condition~(3).
 Consider the categories $\sA=\prod_{\alpha\in\Delta}R_\alpha\Modl_\fl$
and $\sB=\prod_{\beta\le\alpha\in\Delta} R_\beta\Modl_\fl$.
 Proposition~\ref{flat-modules-accessible} together with
Lemma~\ref{cartesian-product-lemma} (for $\kappa=\aleph_1$) tell us that
both the categories $\sA$ and $\sB$ are $\aleph_1$\+accessible, and
describe their full subcategories of $\aleph_1$\+presentable objects.

 Consider the following pair of parallel functors $F$, $G\:\sA
\rightrightarrows\sB$.
 To a collection of flat modules
$(M_\alpha\in R_\alpha\Modl_\fl)_{\alpha\in\Delta}\in\sA$,
the functor $F$ assigns the collection of flat modules
$(L_{\alpha,\beta}\in R_\beta\Modl_\fl)_{\beta\le\alpha\in\Delta}\in\sB$
with $L_{\alpha,\beta}=R_\beta\ot_{R_\alpha}M_\alpha$.
 To the same collection of flat modules
$(M_\alpha\in R_\alpha\Modl_\fl)_{\alpha\in\Delta}\in\sA$,
the functor $G$ assigns the collection of flat modules
$(N_{\alpha,\beta}\in R_\beta\Modl_\fl)_{\beta\le\alpha\in\Delta}\in\sB$
with $N_{\alpha,\beta}=M_\beta$.

 Then the isomorpher $\sD$ of the pair of functors $F$ and $G$ is
the desired category of sets of data~(1\<2).
 Proposition~\ref{isomorpher-prop} (for $\kappa=\aleph_1$ and
$\lambda=\aleph_0$) tells us that the category $\sD$ is
$\aleph_1$\+accessible, and provides a description of its full
subcategory of $\aleph_1$\+presentable objects.

 In order to impose condition~(3), we need to apply the construction
of the equifier.
 Put $\sK=\sD$ and $\sL=
\prod_{\gamma\le\beta\le\alpha\in\Delta}R_\gamma\Modl_\fl$.
 Proposition~\ref{flat-modules-accessible} together with
Lemma~\ref{cartesian-product-lemma} (for $\kappa=\aleph_1$) tell us that
the category $\sL$ is $\aleph_1$\+accessible, and describe its full
subcategory of $\aleph_1$\+presentable objects.

 Consider the following pair of parallel functors $F$, $G\:\sK
\rightrightarrows\sL$.
 To a set of data $(M_\alpha,m_\beta^\alpha)\in\sK=\sD$
the functor $F$ assigns the collection of flat modules
$(L_{\alpha,\beta,\gamma}\in R_\gamma\Modl_\fl)_
{\gamma\le\beta\le\alpha\in\Delta}\in\sL$
with $L_{\alpha,\beta,\gamma}=R_\gamma\ot_{R_\alpha}M_\alpha$.
 To the same set of data $(M_\alpha,m_\beta^\alpha)\in\sK$,
the functor $G$ assigns the collection of flat modules
$(N_{\alpha,\beta,\gamma}\in R_\gamma\Modl_\fl)_
{\gamma\le\beta\le\alpha\in\Delta}\in\sL$
with $N_{\alpha,\beta,\gamma}=M_\gamma$.

 Furthermore, consider the following pair of parallel natural
transformations $\phi$, $\psi\:F\rightrightarrows G$.
 To a set of data $(M_\alpha,m_\beta^\alpha)\in\sK=\sD$, the natural
transformation~$\phi$ assigns the collection of morphisms of flat
modules $m_\gamma^\alpha\:L_{\alpha,\beta,\gamma}=
R_\gamma\ot_{R_\alpha}M_\alpha\rarrow M_\gamma=N_{\alpha,\beta,\gamma}$.
 To the same set of data $(M_\alpha,m_\beta^\alpha)\in\sK$,
the natural transformation~$\psi$ assigns the collection of
compositions of morphisms of flat modules
$$
 \xymatrix{
  {L_{\alpha,\beta,\gamma}=R_\gamma\ot_{R_\alpha}M_\alpha}
  \ar[rr]^-{R_\gamma\ot_{R_\beta}m_\beta^\alpha}
  && {R_\gamma\ot_{R_\beta}M_\beta} \ar[rr]^-{m_\gamma^\beta}
  && {M_\gamma=N_{\alpha,\beta,\gamma}.}
 }
$$

 Then the equifier $\sE$ of the pair of natural transformations $\phi$
and~$\psi$ is the category of sets of data~(1\<2) satisfying
condition~(3), i.~e., $\sE\simeq X\qcoh_\fl$.
 Proposition~\ref{equifier-prop} (for $\kappa=\aleph_1$ and
$\lambda=\aleph_0$) tells us that the category $\sE$ is
$\aleph_1$\+accessible, and provides the desired description of
its full subcategory of $\aleph_1$\+presentable objects.
\end{proof}

\begin{rem}
 Let $\kappa$~be a regular cardinal.
 Similarly to the definitions above, one defines
\emph{$\kappa$\+quasi-compact} and \emph{$\kappa$\+quasi-separated}
schemes (so that these properties for $\kappa=\aleph_0$ reduce to
the usual quasi-compactness and quasi-separatedness, while
$\kappa=\aleph_1$ gives the countable versions).
 Theorem~\ref{flat-qcoh-on-countably-qcomp-qsep} generalizes to
the claims that, for an uncountable regular cardinal~$\kappa$ and
a $\kappa$\+quasi-compact, $\kappa$\+quasi-separated scheme $X$,
the category $X\qcoh_\fl$ is $\kappa$\+accessible, and its
$\kappa$\+presentable objects are precisely all the locally
$\kappa$\+presentable flat quasi-coherent sheaves.

 Furthermore, one can consider the abelian category $X\qcoh$ instead of
the additive category $X\qcoh_\fl$.
 Then it is long known that, for a quasi-compact quasi-separated scheme
$X$, the category $X\qcoh$ is locally finitely presentable, and its
finitely presentable objects are precisely all the locally finitely
presentable quasi-coherent sheaves~\cite[0.5.2.5 and
Corollaire~I.6.9.12]{EGA1}, \cite[Definition Tag~01BN and
Lemma Tag~01PJ or~01PK]{SP}.
 For an uncountable regular cardinal~$\kappa$, the result of
Gabber~\cite[Lemma Tag~077N]{SP} essentially tells that, on
a $\kappa$\+quasi-compact, $\kappa$\+quasi-separated scheme $X$,
any quasi-coherent sheaf is a $\kappa$\+directed union of its
locally $\kappa$\+generated quasi-coherent subsheaves.
 This can be restated by saying that the category $X\qcoh$ is
``locally $\kappa$\+generated'' in the sense that it safisfies
the assumptions of~\cite[Local Generation Theorem~1.70]{AR}
for the cardinal~$\kappa$.

 Gabber's theorem can be strengthened using the techniques of
the proof of Theorem~\ref{flat-qcoh-on-countably-qcomp-qsep} above.
 In fact, for any regular cardinal~$\kappa$ and any
$\kappa$\+quasi-compact, $\kappa$\+quasi-separated scheme $X$,
the abelian category $X\qcoh$ is localy $\kappa$\+presentable.
 All colimits exist in $X\qcoh$, so it suffices to show that
$X\qcoh$ is $\kappa$\+accessible.
 The case of $\kappa=\aleph_0$ is due to Grothendieck and covered by
the references to~\cite{EGA1} and~\cite{SP} above.
 In the case when $\kappa$~is uncountable, one argues as in the proof
of Theorem~\ref{flat-qcoh-on-countably-qcomp-qsep}, replacing
the reference to Proposition~\ref{flat-modules-accessible} by
Lemma~\ref{modules-locally-presentable} and all mentions of
the categories of flat modules $R\Modl_\fl$ by the categories of
all modules $R\Modl$.
 Otherwise, the argument is the same.
 (Cf.~\cite[Remarks~3.2 and~3.4]{Pflcc}.)
\end{rem}

\Section{Complexes of Flat Quasi-Coherent Sheaves as Directed Colimits}
\label{complexes-of-flats-secn}

 In this section we prove three theorems about complexes of
quasi-coherent sheaves, based on the respective module versions
provided by Propositions~\ref{complexes-of-flats-acessible},
\ref{acyclic-flat-cocycles-accessible},
and~\ref{homotopy-flats-accessible}.

\begin{thm}
 Let $X$ be a countably quasi-compact, countably quasi-separated
scheme.
 Then the category of complexes of flat quasi-coherent sheaves\/
$\Com(X\qcoh_\fl)$ is\/ $\aleph_1$\+accessible.
 The\/ $\aleph_1$\+presentable objects of\/ $\Com(X\qcoh_\fl)$ are
precisely all the complexes of locally countably presentable flat
quasi-coherent sheaves on~$X$.
 Hence every complex of flat quasi-coherent sheaves on $X$ is
an\/ $\aleph_1$\+directed colimit of complexes of locally countably
presentable flat quasi-coherent sheaves.
\end{thm}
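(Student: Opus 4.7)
The plan is to imitate the proof of Theorem~\ref{flat-qcoh-on-countably-qcomp-qsep} verbatim, with the category of flat modules $R_\alpha\Modl_\fl$ replaced everywhere by the category of complexes of flat modules $\Com(R_\alpha\Modl_\fl)$. First, I would pick an at most countable affine open covering $X=\bigcup_{\alpha\in\Delta}U_\alpha$ satisfying the compatibility property $U_\alpha\cap U_\beta=\bigcup_{\gamma\le\alpha,\,\gamma\le\beta}U_\gamma$ exactly as at the beginning of that proof. Applying Lemma~\ref{glue-from-affine-covering} termwise identifies $\Com(X\qcoh_\fl)$ with the category of collections of complexes of flat $R_\alpha$\+modules $C_\alpha^\bu$ equipped with gluing isomorphisms $C_\beta^\bu\simeq R_\beta\ot_{R_\alpha}C_\alpha^\bu$ satisfying the cocycle condition.

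Next, I would assemble this category from pieces via the isomorpher/equifier constructions. Set $\sA=\prod_{\alpha\in\Delta}\Com(R_\alpha\Modl_\fl)$ and $\sB=\prod_{\beta\le\alpha\in\Delta}\Com(R_\beta\Modl_\fl)$; by Proposition~\ref{complexes-of-flats-acessible} and Lemma~\ref{cartesian-product-lemma} (applicable since $\Delta$ is countable and hence $\aleph_1$\+small), both are $\aleph_1$\+accessible with the expected description of $\aleph_1$\+presentables. Define $F,G\:\sA\rightrightarrows\sB$ by $F((C_\alpha^\bu))_{\alpha,\beta}=R_\beta\ot_{R_\alpha}C_\alpha^\bu$ and $G((C_\alpha^\bu))_{\alpha,\beta}=C_\beta^\bu$. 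Both functors are termwise extensions of the corresponding functors in the module case, so they preserve $\aleph_1$\+directed colimits and colimits of countable chains (which are computed termwise in complexes) and, since $R_\beta\ot_{R_\alpha}{-}$ sends countably presentable flat $R_\alpha$\+modules to countably presentable flat $R_\beta$\+modules, they take $\aleph_1$\+presentable objects to $\aleph_1$\+presentable objects. Proposition~\ref{isomorpher-prop} (with $\kappa=\aleph_1$, $\lambda=\aleph_0$) then yields an $\aleph_1$\+accessible isomorpher $\sD$ whose $\aleph_1$\+presentables are those $(C_\alpha^\bu,m_\beta^\alpha)$ with each $C_\alpha^\bu$ a complex of countably presentable flat $R_\alpha$\+modules.

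To impose the cocycle condition, I would then apply Proposition~\ref{equifier-prop} to the pair of natural transformations between the two obvious functors $\sD\rightrightarrows\prod_{\gamma\le\beta\le\alpha}\Com(R_\gamma\Modl_\fl)$ sending $(C_\alpha^\bu,m_\beta^\alpha)$ to $m_\gamma^\alpha$ and to $m_\gamma^\beta\circ(R_\gamma\ot_{R_\beta}m_\beta^\alpha)$ respectively, exactly as in the proof of Theorem~\ref{flat-qcoh-on-countably-qcomp-qsep}. The equifier $\sE$ is then equivalent to $\Com(X\qcoh_\fl)$, and Proposition~\ref{equifier-prop} provides the desired description of its $\aleph_1$\+presentable objects as complexes all of whose terms are locally countably presentable flat quasi-coherent sheaves. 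The final assertion about $\aleph_1$\+directed colimits is then automatic from $\aleph_1$\+accessibility.

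The only real point of care, and the step I expect to be the most subtle, is checking the hypotheses of Propositions~\ref{isomorpher-prop} and~\ref{equifier-prop} for the complex-level functors: namely, that $\Com(R_\alpha\Modl_\fl)$ admits countable colimits (which holds because flat modules are closed under directed colimits and colimits of complexes are termwise), that the tensor product functors preserve $\aleph_1$\+directed colimits and countable chains, and that they preserve $\aleph_1$\+presentability. All three reduce termwise to the module case and are essentially contained in Proposition~\ref{complexes-of-flats-acessible}, but the verification should be stated explicitly.
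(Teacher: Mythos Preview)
Your proposal is correct and follows essentially the same approach as the paper: the paper's proof simply says to imitate the proof of Theorem~\ref{flat-qcoh-on-countably-qcomp-qsep} with Proposition~\ref{complexes-of-flats-acessible} replacing Proposition~\ref{flat-modules-accessible}, together with the obvious complex-level version of Lemma~\ref{glue-from-affine-covering}. You have spelled out in detail exactly this argument, including the isomorpher/equifier steps and the verification of their hypotheses.
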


\begin{proof}
 The proof is similar to that of
Theorem~\ref{flat-qcoh-on-countably-qcomp-qsep} and based on
Proposition~\ref{complexes-of-flats-acessible} (instead of
Proposition~\ref{flat-modules-accessible}).
 One also needs to use the (obvious) version of
Lemma~\ref{glue-from-affine-covering} for complexes of
quasi-coherent sheaves.

 In the case of a quasi-compact quasi-separated scheme $X$,
an argument similar to the proof of
Theorem~\ref{flat-qcoh-on-qcomp-qsep} also works.
 Then one needs to use Proposition~\ref{complexes-of-flats-acessible}
and the version of Lemma~\ref{glue-from-a-cover-by-two} for
complexes of quasi-coherent sheaves.
\end{proof}

\begin{thm}
 Let $X$ be a countably quasi-compact, countably quasi-separated
scheme.
 Then the category\/ $\Ac_\fl(X\qcoh_\fl)$ of acyclic complexes of
flat quasi-coherent sheaves on $X$ with flat sheaves of cocycles
is\/ $\aleph_1$\+accessible.
 The\/ $\aleph_1$\+presentable objects of\/ $\Ac_\fl(X\qcoh_\fl)$ are
precisely all the acyclic complexes of locally countably presentable
flat quasi-coherent sheaves on $X$ with flat sheaves of cocycles.
 So every acyclic complex of flat quasi-coherent sheaves on $X$
with flat sheaves of cocycles is an\/ $\aleph_1$\+directed colimit of
acyclic complexes of locally countably presentable flat quasi-coherent
sheaves with flat sheaves of cocycles.
\end{thm}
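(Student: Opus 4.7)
The strategy is to repeat the argument of Theorem~\ref{flat-qcoh-on-countably-qcomp-qsep} verbatim, but with the building-block category $R\Modl_\fl$ replaced throughout by $\Ac_\fl(R\Modl_\fl)$, relying on Proposition~\ref{acyclic-flat-cocycles-accessible} as the input in place of Proposition~\ref{flat-modules-accessible}. First, I would fix an at most countable affine open covering $X=\bigcup_{\alpha\in\Delta}U_\alpha$ enjoying the subordination property $U_\alpha\cap U_\beta=\bigcup_{\gamma\le\alpha,\,\gamma\le\beta}U_\gamma$ granted by the countable quasi-compactness and countable quasi-separatedness of $X$, and record the evident variant of Lemma~\ref{glue-from-affine-covering} for acyclic complexes of flats with flat cocycles: such a complex on $X$ is the same as a compatible family of acyclic complexes $M_\alpha^\bu$ of flat $R_\alpha$-modules with flat cocycles, together with gluing isomorphisms $m_\beta^\alpha\:R_\beta\ot_{R_\alpha}M_\alpha^\bu\simeq M_\beta^\bu$ satisfying the usual cocycle condition. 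This step is legitimate because $R_\beta$ is a localization of $R_\alpha$, hence $R_\alpha$-flat, so $R_\beta\ot_{R_\alpha}(-)$ carries $\Ac_\fl(R_\alpha\Modl_\fl)$ into $\Ac_\fl(R_\beta\Modl_\fl)$.

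Next, I would set
\[
 \sA=\prod_{\alpha\in\Delta}\Ac_\fl(R_\alpha\Modl_\fl),
 \qquad
 \sB=\prod_{\beta\le\alpha\in\Delta}\Ac_\fl(R_\beta\Modl_\fl),
\]
both of which are $\aleph_1$-accessible by Proposition~\ref{acyclic-flat-cocycles-accessible} and Lemma~\ref{cartesian-product-lemma} (the indexing sets are countable, hence $\aleph_1$-small), with $\aleph_1$-presentable objects explicitly described as the componentwise acyclic complexes of countably presentable flats. Define $F,G\:\sA\rightrightarrows\sB$ by $F((M_\alpha^\bu))_{(\alpha,\beta)}=R_\beta\ot_{R_\alpha}M_\alpha^\bu$ and $G((M_\alpha^\bu))_{(\alpha,\beta)}=M_\beta^\bu$. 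Both functors preserve $\aleph_1$-directed colimits and colimits of countable chains, since these are computed termwise in the ambient module categories and the tensor product commutes with colimits; both functors send $\aleph_1$-presentable objects to $\aleph_1$-presentable objects, since $R_\beta\ot_{R_\alpha}(-)$ preserves countable presentability of modules. Proposition~\ref{isomorpher-prop} with $\kappa=\aleph_1$ and $\lambda=\aleph_0$ then identifies the category $\sD$ of the data (1)\<(2) as $\aleph_1$-accessible with the expected description of its $\aleph_1$-presentable objects.

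To enforce the cocycle condition~(3), I would pass to the equifier: set $\sK=\sD$ and $\sL=\prod_{\gamma\le\beta\le\alpha}\Ac_\fl(R_\gamma\Modl_\fl)$, which is again $\aleph_1$-accessible with the analogous description of $\aleph_1$-presentables. Define two parallel functors $F',G'\:\sK\rightrightarrows\sL$ sending $(M_\alpha^\bu,m_\beta^\alpha)$ to the triple-indexed collections with $(\alpha,\beta,\gamma)$-components $R_\gamma\ot_{R_\alpha}M_\alpha^\bu$ and $M_\gamma^\bu$, respectively, and two natural transformations $\phi,\psi\:F'\rightrightarrows G'$: $\phi$ is the direct composite $m_\gamma^\alpha$, while $\psi$ is the composite $m_\gamma^\beta\circ(R_\gamma\ot_{R_\beta}m_\beta^\alpha)$. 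Since $\sE\simeq X\qcoh$-theoretically corresponds to acyclic complexes of flats with flat cocycles on $X$, Proposition~\ref{equifier-prop} applies and yields the desired conclusion.

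The only nontrivial point I expect to have to verify carefully is that the gluing equivalence of Lemma~\ref{glue-from-affine-covering} truly restricts to the full subcategory $\Ac_\fl(X\qcoh_\fl)$: that is, that a set of gluing data whose pieces $M_\alpha^\bu$ are acyclic complexes of flats with flat cocycles assembles to a quasi-coherent complex on $X$ that is \emph{globally} acyclic with \emph{globally} flat cocycle sheaves. Acyclicity and flatness of the cocycles are each local properties on an affine cover, and flatness of each term is also local; one also needs that the restriction functors to affine open subschemes of $U_\alpha$ are given by tensor product with a flat $R_\alpha$-algebra, so they preserve both acyclicity of the complex and the identification of cocycles with the sheafified cocycles of the modules. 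Once this locality check is made explicit, everything else is a mechanical transcription of the proof of Theorem~\ref{flat-qcoh-on-countably-qcomp-qsep}.
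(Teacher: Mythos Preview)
Your proposal is correct and follows essentially the same approach as the paper's own proof: replace $R\Modl_\fl$ by $\Ac_\fl(R\Modl_\fl)$, invoke Proposition~\ref{acyclic-flat-cocycles-accessible} in place of Proposition~\ref{flat-modules-accessible}, and rerun the isomorpher/equifier argument of Theorem~\ref{flat-qcoh-on-countably-qcomp-qsep}. The locality check you single out at the end---that acyclicity of a complex of flat quasi-coherent sheaves and flatness of its sheaves of cocycles can be tested on the members of an affine open covering---is exactly the one observation the paper records explicitly.
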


\begin{proof}
 The proof is similar to that of
Theorem~\ref{flat-qcoh-on-countably-qcomp-qsep} and based on
Proposition~\ref{acyclic-flat-cocycles-accessible} (instead of
Proposition~\ref{flat-modules-accessible}).
 In the case of a quasi-compact quasi-separated scheme $X$,
an argument similar to the proof of
Theorem~\ref{flat-qcoh-on-qcomp-qsep} is also possible.

 For both versions of the argument, one needs to make the obvious 
observation that a complex of flat quasi-coherent sheaves $\F^\bu$
on $X$ has flat sheaves of cocycles if and only if, for every affine
open subscheme $U\subset X$, the complex of flat $\O(U)$\+modules
$\F^\bu(U)$ has flat $\O(U)$\+modules of cocycles.
 Moreover, it suffices to check this condition for affine open
subschemes $U_\alpha\subset X$ belonging to any given affine open
covering of the scheme~$X$.
 So being a complex of flat quasi-coherent sheaves with flat sheaves
of cocycles is a local property.
\end{proof}

 In order to obtain the quasi-coherent sheaf version of
the Christensen--Holm theorem~\cite[Theorem~1.1]{CH}, based on
Proposition~\ref{homotopy-flats-accessible}, we will need 
a preparatory lemma.
 The resulting theorem will require stronger assumptions on the scheme
$X$ than the previous theorems, because of the assumption in
part~(b) of the lemma.

 But first of all, let us give the definition.
 A complex of flat quasi-coherent sheaves $\F^\bu$ on a scheme $X$ is
said to be \emph{homotopy flat} if, for every acyclic complex of
quasi-coherent sheaves $\A^\bu$ on $X$, the complex of quasi-coherent
sheaves $\A^\bu\ot_{\O_X}\F^\bu$ on $X$ is also acyclic.

\begin{lem} \label{homotopy-flatness-locality-lemma}
\textup{(a)} If $\F^\bu$ is complex of quasi-coherent sheaves on
a scheme $X$ and $X=\bigcup_{\alpha\in\Delta} U_\alpha$ is an open
covering of $X$, and if the quasi-coherent sheaves $\F^\bu|_{U_\alpha}$
are homotopy flat on $U_\alpha$ for all\/ $\alpha\in\Delta$, then
the quasi-coherent sheaf $\F$ on $X$ is homotopy flat. \par
\textup{(b)} If $\F^\bu$ is a homotopy flat complex of quasi-coherent
sheaves on a scheme $X$ and $U\subset X$ is an open subscheme such that
the open immersion $j\:U\rarrow X$ is an affine morphism, then
$\F^\bu|_U$ is a homotopy flat complex of flat quasi-coherent sheaves
on~$U$.
\end{lem}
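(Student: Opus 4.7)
My plan is to reduce both parts to formal manipulations with the tensor product, exploiting that acyclicity of a complex of quasi-coherent sheaves is a local property on the base scheme. Part~(a) is essentially bookkeeping; part~(b) will require the projection formula and exactness of $j_*$.

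For part~(a), I would start with an arbitrary acyclic complex $\A^\bu$ of quasi-coherent sheaves on $X$ and aim to show that $\A^\bu\otimes_{\O_X}\F^\bu$ is acyclic. The key observation is that restriction to an open subscheme is exact and commutes with the tensor product of quasi-coherent sheaves, so for every $\alpha\in\Delta$ one has
$$
(\A^\bu\otimes_{\O_X}\F^\bu)|_{U_\alpha}\;\cong\;\A^\bu|_{U_\alpha}\otimes_{\O_{U_\alpha}}\F^\bu|_{U_\alpha}.
$$
Since restriction is exact, $\A^\bu|_{U_\alpha}$ is acyclic; by the assumed homotopy flatness of $\F^\bu|_{U_\alpha}$, the right-hand side is then acyclic on $U_\alpha$. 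As acyclicity of a complex of quasi-coherent sheaves can be checked on any open covering, this yields the claim.

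For part~(b), the three key ingredients are: (i)~exactness of $j_*$ on quasi-coherent sheaves, which holds because $j$ is affine; (ii)~the projection formula $j_*(\G\otimes_{\O_U}j^*\F)\cong j_*\G\otimes_{\O_X}\F$ for an affine morphism; and (iii)~the identity $j^*j_*\cong\mathrm{id}$ on quasi-coherent sheaves on~$U$, valid for any open immersion. Given an acyclic complex $\G^\bu$ of quasi-coherent sheaves on $U$, exactness of $j_*$ produces an acyclic complex $j_*\G^\bu$ on $X$. By homotopy flatness of $\F^\bu$, the complex $j_*\G^\bu\otimes_{\O_X}\F^\bu$ is acyclic, and by the projection formula it is isomorphic to $j_*(\G^\bu\otimes_{\O_U}\F^\bu|_U)$. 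Applying the exact functor $j^*$ and using $j^*j_*\cong\mathrm{id}$, we conclude that $\G^\bu\otimes_{\O_U}\F^\bu|_U$ is acyclic, so $\F^\bu|_U$ is homotopy flat. (Note that $\F^\bu|_U$ is automatically a complex of flat quasi-coherent sheaves, since flatness is a local property.)

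The main subtlety is in part~(b): the affineness hypothesis on $j$ is essential, because without it $j_*$ need not be exact on quasi-coherent sheaves (so $j_*\G^\bu$ would generally fail to be acyclic), and the projection formula as stated might fail for general open immersions. This is precisely why the theorem about homotopy flat complexes of flats will require the scheme to be semi-separated, unlike the earlier theorems in Section~\ref{complexes-of-flats-secn}. Part~(a), by contrast, is comparatively routine once one records the locality of acyclicity and the compatibility of the tensor product with restriction to open subschemes.
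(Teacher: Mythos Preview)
Your proposal is correct and follows essentially the same route as the paper. Part~(a) is identical. In part~(b) the paper is marginally more direct: rather than invoking the projection formula on $X$ and then applying $j^*$, it simply restricts the acyclic complex $j_*\G^\bu\otimes_{\O_X}\F^\bu$ to $U$ and uses that restriction commutes with tensor product together with $j^*j_*\cong\mathrm{id}$, obtaining $(j_*\G^\bu\otimes_{\O_X}\F^\bu)|_U\simeq\G^\bu\otimes_{\O_U}\F^\bu|_U$ in one step; this is of course equivalent to your argument, since the projection formula localizes to the same identity.
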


\begin{proof}
 Part~(a) is easy.
 Let $\A^\bu$ be an acyclic complex of quasi-coherent sheaves on~$X$.
 Then $\A^\bu|_{U_\alpha}$ is an acyclic complex of quasi-coherent
sheaves on $U_\alpha$ for every $\alpha\in\Delta$.
 By assumption, it follows that the complex of quasi-coherent sheaves
$\A^\bu|_{U_\alpha}\ot_{\O_{U_\alpha}}\F^\bu|_{U_\alpha}$ is acyclic on
$U_\alpha$.
 Since $(\A^\bu\ot_{\O_X}\F^\bu)|_{U_\alpha}=
\A^\bu|_{U_\alpha}\ot_{\O_{U_\alpha}}\F^\bu|_{U_\alpha}$ and acyclicity
of a complex of quasi-coherent sheaves is a local property, we can
conclude that the complex of quasi-coherent sheaves
$\A^\bu\ot_{\O_X}\F^\bu$ is acyclic on~$X$.

 Part~(b): let $\B^\bu$ be an acyclic complex of quasi-coherent sheaves
on~$U$.
 Since the direct image functor $j_*\:U\qcoh\rarrow X\qcoh$ is exact,
the complex of quasi-coherent sheaves $j_*\B^\bu$ on~$X$ is acyclic
as well.
 By assumption, it follows that the complex of quasi-coherent sheaves
$j_*\B^\bu\ot_{\O_X}\F^\bu$ is acyclic on~$X$.
 Hence the complex of quasi-coherent sheaves
$\B^\bu\ot_{\O_U}(\F^\bu|_U)\simeq(j_*\B^\bu\ot_{\O_X}\F^\bu)|_U$
is acyclic on~$U$.
\end{proof}

\begin{cor} \label{homotopy-flatness-locality-cor}
 Let $X$ be a semi-separated scheme and $X=\bigcup_{\alpha\in\Delta}
U_\alpha$ be its affine open covering.
 Then a complex of quasi-coherent sheaves $\F^\bu$ on $X$ is homotopy
flat if and only if, for every $\alpha\in\Delta$, the complex of
$\O(U_\alpha)$\+modules $\F^\bu(U_\alpha)$ is homotopy flat.
\end{cor}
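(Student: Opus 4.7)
The plan is to reduce both implications directly to the two parts of Lemma~\ref{homotopy-flatness-locality-lemma}, after observing that on an affine scheme homotopy flatness of a complex of quasi-coherent sheaves is the same as homotopy flatness of the corresponding complex of modules.

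First I would record the following preliminary identification. For an affine scheme $U=\Spec R$, the equivalence of abelian categories $U\qcoh\simeq R\Modl$ identifies the tensor product $\ot_{\O_U}$ with $\ot_R$, and acyclicity of a complex of quasi-coherent sheaves on $U$ with acyclicity of the corresponding complex of $R$\+modules. Consequently, a complex $\F^\bu$ of quasi-coherent sheaves on $U$ is homotopy flat on $U$ if and only if the complex $\F^\bu(U)$ is a homotopy flat complex of $R$\+modules in the sense of~\cite{Spal}.

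For the ``if'' direction, assume each $\F^\bu(U_\alpha)$ is a homotopy flat complex of $\O(U_\alpha)$\+modules. By the observation above, $\F^\bu|_{U_\alpha}$ is a homotopy flat complex of quasi-coherent sheaves on $U_\alpha$ for every $\alpha\in\Delta$. Lemma~\ref{homotopy-flatness-locality-lemma}(a) applied to the affine open covering $X=\bigcup_{\alpha\in\Delta}U_\alpha$ then yields that $\F^\bu$ is homotopy flat on~$X$.

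For the ``only if'' direction, I would use semi-separatedness in the form: for every affine open $U_\alpha\subset X$, the open immersion $j_\alpha\:U_\alpha\rarrow X$ is an affine morphism (because its intersection with any affine open of $X$ is affine). Thus Lemma~\ref{homotopy-flatness-locality-lemma}(b) applies and shows that $\F^\bu|_{U_\alpha}$ is a homotopy flat complex of quasi-coherent sheaves on the affine scheme~$U_\alpha$. Invoking the preliminary identification once more, the complex of $\O(U_\alpha)$\+modules $\F^\bu(U_\alpha)$ is homotopy flat for each $\alpha\in\Delta$, completing the proof. The only place where anything nontrivial is used is the appeal to semi-separatedness to guarantee the affineness of the inclusions $j_\alpha$ required in Lemma~\ref{homotopy-flatness-locality-lemma}(b); this is the one step that would fail without the semi-separated hypothesis.
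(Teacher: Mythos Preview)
Your proof is correct and follows essentially the same approach as the paper: both directions are reduced to parts~(a) and~(b) of Lemma~\ref{homotopy-flatness-locality-lemma}, with semi-separatedness invoked precisely to guarantee that the open immersions $j_\alpha\:U_\alpha\rarrow X$ are affine morphisms. Your write-up simply makes explicit the identification of homotopy flatness on an affine scheme with homotopy flatness of the corresponding complex of modules, which the paper leaves implicit.
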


\begin{proof}
 Semi-separated schemes $X$ are characterized by the condition that,
for any affine open subscheme $U\subset X$, the open immersion
$j\:U\rarrow X$ is an affine morphism.
 So both parts of Lemma~\ref{homotopy-flatness-locality-lemma}
are applicable.
 (Cf.\ Remark~\ref{prolongation-problem-remark}.)
\end{proof}

\begin{thm}
 Let $X$ be a countably quasi-compact, semi-separated scheme.
 Then the category\/ $\Com(X\qcoh_\fl)_\hfl$ of homotopy flat
complexes of flat quasi-coherent sheaves on $X$ is\/
$\aleph_1$\+accessible.
 The\/ $\aleph_1$\+presentable objects of\/ $\Com(X\qcoh_\fl)_\hfl$
are precisely all the homotopy flat complexes of locally countably
presentable flat quasi-coherent sheaves on~$X$.
 So every homotopy flat complex of flat quasi-coherent sheaves on $X$
is an\/ $\aleph_1$\+directed colimit of homotopy flat complexes of
locally countably presentable flat quasi-coherent sheaves.
\end{thm}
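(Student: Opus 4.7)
The plan is to follow the proof strategy of Theorem~\ref{flat-qcoh-on-countably-qcomp-qsep} with Proposition~\ref{homotopy-flats-accessible} replacing Proposition~\ref{flat-modules-accessible}, the essential new input being Corollary~\ref{homotopy-flatness-locality-cor}. Since $X$ is countably quasi-compact, first choose a countable affine open covering $X=\bigcup_{\alpha\in\Delta}U_\alpha$; by semi-separatedness every pairwise intersection $U_\alpha\cap U_\beta$ is again affine, so an inductive refinement produces a countable covering satisfying the hypothesis $U_\alpha\cap U_\beta=\bigcup_{\gamma\le\alpha}^{\gamma\le\beta}U_\gamma$ of Lemma~\ref{glue-from-affine-covering}.

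The decisive observation is that Corollary~\ref{homotopy-flatness-locality-cor} identifies $\Com(X\qcoh_\fl)_\hfl$ with the subcategory of the category of gluing data furnished by (the obvious complex-of-sheaves version of) Lemma~\ref{glue-from-affine-covering} consisting of those $(M_\alpha^\bu,m_\beta^\alpha)$ for which each $M_\alpha^\bu$ is a homotopy flat complex of flat $R_\alpha$\+modules. This is the only place where semi-separatedness is used, and without it one has no way to recognize homotopy flatness from local data on an affine open covering.

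With this identification in hand, the remainder of the proof is a verbatim transcription of the three-step construction in the proof of Theorem~\ref{flat-qcoh-on-countably-qcomp-qsep}. Proposition~\ref{homotopy-flats-accessible} together with Lemma~\ref{cartesian-product-lemma} (applicable because $\Delta$ is countable, hence $\aleph_1$\+small) yield $\aleph_1$\+accessibility of the ambient product categories $\sA=\prod_{\alpha\in\Delta}\Com(R_\alpha\Modl_\fl)_\hfl$ and $\sB=\prod_{\beta\le\alpha\in\Delta}\Com(R_\beta\Modl_\fl)_\hfl$, along with the expected description of their $\aleph_1$\+presentable objects. The base-change functors $R_\beta\ot_{R_\alpha}(-)$ preserve homotopy flatness (apply restriction of scalars to any acyclic complex of $R_\beta$\+modules used as a test object), are exact on flats, and preserve all colimits and $\aleph_1$\+presentability; hence Proposition~\ref{isomorpher-prop} (for $\kappa=\aleph_1$, $\lambda=\aleph_0$) applied to the pair of functors $F,G\:\sA\rightrightarrows\sB$ defined exactly as in the proof of Theorem~\ref{flat-qcoh-on-countably-qcomp-qsep} produces an $\aleph_1$\+accessible category encoding conditions~(1\<2). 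Proposition~\ref{equifier-prop} then imposes the cocycle condition~(3), yielding the desired description of $\Com(X\qcoh_\fl)_\hfl$ and its $\aleph_1$\+presentable objects.

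The principal obstacle is precisely the locality of homotopy flatness, handled by Corollary~\ref{homotopy-flatness-locality-cor}; this both accounts for the strengthened semi-separatedness hypothesis and is the reason this theorem cannot be proved in the generality of the two previous theorems of this section. The rest of the argument is a mechanical instantiation of the categorical machinery already in place.
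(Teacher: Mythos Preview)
Your proposal is correct and follows essentially the same approach as the paper: replace Proposition~\ref{flat-modules-accessible} by Proposition~\ref{homotopy-flats-accessible} in the proof of Theorem~\ref{flat-qcoh-on-countably-qcomp-qsep}, with Corollary~\ref{homotopy-flatness-locality-cor} supplying the locality of homotopy flatness that justifies the gluing description. Your explicit verification that the base-change functors $R_\beta\ot_{R_\alpha}(-)$ preserve homotopy flatness is a detail the paper leaves implicit but which is indeed needed for the functors $F$ and $G$ to land in the correct target categories.
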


\begin{proof}
 The proof is similar to that of
Theorem~\ref{flat-qcoh-on-countably-qcomp-qsep} and based on
Proposition~\ref{homotopy-flats-accessible} for $\kappa=\aleph_1$
(instead of Proposition~\ref{flat-modules-accessible}).
 In the case of a quasi-compact semi-separated scheme $X$,
an argument similar to the proof of
Theorem~\ref{flat-qcoh-on-qcomp-qsep} also works.
 Corollary~\ref{homotopy-flatness-locality-cor} is important for
both versions of the argument.
\end{proof}

\Section{Quasi-Coherent Sheaves of Finite Projective Dimension}
\label{qcoh-finite-projdim-secn}

 Much of the rest of this paper is written in the setting of
exact categories (in Quillen's sense).
 We refer to the overview~\cite{Bueh} for background material on exact
categories.

 Let $\sK$ be an exact category, $P\in\sK$ be an object, $m\ge0$ be
an integer.
 One says that the \emph{projective dimension} of $P$ in $\sK$ does not
exceed~$m$ and writes $\pd_\sK P\le m$ if $\Ext_\sK^n(P,K)=0$ for
all objects $K\in\sK$ and integers $n>m$.
 Here $\Ext_\sK^*$ denotes the Yoneda Ext groups in the exact
category~$\sK$.

 For a scheme $X$, we denote by $\Ext_X^*=\Ext_{X\qcoh}^*$
the Yoneda Ext groups in the abelian category $X\qcoh$.
 Given a quasi-coherent sheaf $\P$ on a scheme $X$, we denote by
$\pd_X\P=\pd_{X\qcoh}\P$ the projective dimension of the object
$\P\in X\qcoh$.
 Notice that the definition of the projective dimension in the previous
paragraph does \emph{not} require any projective resolutions in
$X\qcoh$ (which usually do not exist).

 The following Ext-adjunction lemma is standard.

\begin{lem} \label{ext-adjunction}
 Let\/ $\sA$ and\/ $\sB$ be exact categories, $F\:\sA\rarrow\sB$ be
a functor, and $G\:\sB\rarrow\sA$ be a functor right adjoint to~$F$.
 Assume that both the functors $F$ and $G$ are exact.
 Then for any two objects $A\in\sA$ and $B\in\sB$, and every integer
$n\ge0$, there is a natural isomorphism of the Ext groups
$$
 \Ext_\sB^n(F(A),B)\simeq\Ext_\sA^n(A,G(B)).
$$
\end{lem}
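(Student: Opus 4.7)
The plan is to represent elements of both Ext groups as Yoneda equivalence classes of $n$-fold extensions and build mutually inverse maps at the level of representatives using the exactness of $F$ and $G$ together with the unit $\eta\:\mathrm{id}_\sA\to GF$ and the counit $\epsilon\:FG\to\mathrm{id}_\sB$ of the adjunction.

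For $n=0$ the assertion is exactly the given hom-adjunction, which serves as the base case. For $n\geq 1$, given a representative $n$-extension $\xi\:0\to B\to E_1\to\cdots\to E_n\to F(A)\to 0$ in $\sB$, I would apply the exact functor $G$ term by term, yielding an $n$-extension of $GF(A)$ by $G(B)$ in $\sA$, and then pull back along $\eta_A\:A\to GF(A)$ to obtain an $n$-extension of $A$ by $G(B)$; this defines a map $\Phi\:\Ext^n_\sB(F(A),B)\lrarrow\Ext^n_\sA(A,G(B))$. Symmetrically, applying the exact functor $F$ to an $n$-extension $\zeta$ of $A$ by $G(B)$ and pushing out along $\epsilon_B\:FG(B)\to B$ defines the reverse map $\Psi$. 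Exactness of $F$ and $G$, together with the standard behavior of pullbacks and pushouts in exact categories, guarantees that the intermediate sequences are again genuine $n$-extensions; well-definedness on Yoneda equivalence classes and naturality in $A$ and $B$ are then immediate from the functoriality of pullback, pushout, $F$, and~$G$.

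The crux is verifying $\Psi\Phi=\mathrm{id}$ and $\Phi\Psi=\mathrm{id}$. For $\Psi\Phi([\xi])$, I would track the two end-term structure maps through the composite construction: after applying $G$, pulling back along $\eta_A$, applying $F$, and pushing out along $\epsilon_B$, the map at the $F(A)$-end becomes $\epsilon_{F(A)}\circ F(\eta_A)$, which collapses to $\mathrm{id}_{F(A)}$ by the triangle identity $\epsilon F\circ F\eta=\mathrm{id}_F$, while a dual cleanup at the $B$-end exhibits the resulting $n$-extension as Yoneda-equivalent to $\xi$. The identity $\Phi\Psi=\mathrm{id}$ follows in the same way from the other triangle identity $G\epsilon\circ\eta G=\mathrm{id}_G$. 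This last step, correctly tracking the interaction of pullbacks and pushouts with the application of the exact adjoint functors and then invoking the triangle identities, is where the real content of the proof sits and is the main technical obstacle.
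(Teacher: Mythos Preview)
Your argument is correct: constructing the maps $\Phi$ and $\Psi$ by applying the exact functors to Yoneda $n$-extensions and then pulling back along $\eta_A$ or pushing out along $\epsilon_B$ is a standard and valid approach, and the triangle identities are precisely what makes $\Psi\Phi$ and $\Phi\Psi$ collapse to the identity on Yoneda classes. The only place your write-up is slightly loose is the phrase ``the map at the $F(A)$-end becomes $\epsilon_{F(A)}\circ F(\eta_A)$'': more precisely, the counit furnishes a morphism of $n$-extensions from $F(\eta_A^*G(\xi))$ to $\xi$ whose component at the right end is $\epsilon_{F(A)}\circ F(\eta_A)=\mathrm{id}_{F(A)}$ and whose component at the left end is $\epsilon_B$, so that pushing out along $\epsilon_B$ yields exactly~$[\xi]$. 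With that clarification the argument goes through in any exact category, since pullbacks of admissible epimorphisms and pushouts of admissible monomorphisms are guaranteed by the axioms.

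The paper takes a different, more structural route: since $F$ and $G$ are exact, they descend to an adjoint pair of triangulated functors between the derived categories $\sD(\sA)$ and $\sD(\sB)$, and the Ext isomorphism is then just the Hom-adjunction $\Hom_{\sD(\sB)}(F(A),B[n])\simeq\Hom_{\sD(\sA)}(A,G(B)[n])$. This avoids any manipulation of explicit $n$-extensions, at the cost of invoking the derived category formalism. Your Yoneda approach is more elementary and self-contained (no derived categories needed), while the paper's approach is shorter once that machinery is in place and makes naturality and compatibility with long exact sequences automatic.
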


\begin{proof}
 One simple approach consists in establishing a more general result,
viz., that an adjunction of exact functors $F$ and $G$ leads to
an adjunction of the induced triangulated functors between the (bounded
or unbounded) derived categories $F\:\sD(\sA)\rarrow\sD(\sB)$ and
$G\:\sD(\sB)\rarrow\sD(\sA)$.
 For this purpose, one constructs the adjunction morphisms for the pair
of triangulated functors $F$ and $G$ and checks the required equations
on the compositions.
\end{proof}

 The adjoint pairs of exact functors we are interested in in this
section are the inverse and direct images for affine open immersions
of affine open subschemes.
 Given a semi-separated scheme $X$ and an affine open subscheme
$U\subset X$ with the identity open immersion denoted by
$j\:U\rarrow X$, we have an exact functor of direct image
$j_*\:U\qcoh\rarrow X\qcoh$ and an exact functor of inverse image
$j^*\:X\qcoh\rarrow U\qcoh$.
 The latter functor is left adjoint to the former one.
 
 The following well-known lemma provides \v Cech coresolutions in
the categories of quasi-coherent sheaves on a quasi-compact
semi-separated scheme.

\begin{lem} \label{cech-coresolution}
 Let $X=\bigcup_{\alpha=1}^d U_\alpha$ be a finite afffine open covering
of a quasi-compact semi-separated scheme~$X$.
 For any sequence of indices $1\le\alpha_1<\alpha_2<\dotsb<
\alpha_r\le d$, denote by
$j_{\alpha_1,\dotsc,\alpha_r}\:\bigcap_{s=1}^r U_{\alpha_s}\rarrow X$
the identity open immersion of the corresponding (affine) intersection of
the affine open subschemes $U_{\alpha_1}\cap\dotsb\cap U_{\alpha_r}
\subset X$.
 Then, for any quasi-coherent sheaf $\M$ on $X$, there is a natural
$(d+\nobreak1)$\+term exact sequence of quasi-coherent sheaves on $X$
\begin{multline} \label{cech-coresolution-eqn}
 0\lrarrow \M\lrarrow
 \bigoplus\nolimits_{\alpha=1}^d j_\alpha{}_*j_\alpha^*\M
 \lrarrow\bigoplus\nolimits_{1\le\alpha<\beta\le d}
 j_{\alpha,\beta}{}_*j_{\alpha,\beta}^*\M \\
 \lrarrow\dotsb\lrarrow j_{1,2,\dotsc,d}{}_*j_{1,2,\dotsc,d}^*\M
 \lrarrow0.
\end{multline}
\end{lem}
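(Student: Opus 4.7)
My plan is to check exactness of the sequence~\eqref{cech-coresolution-eqn} after restriction to each $U_\beta$, $\beta \in \{1, \ldots, d\}$; since exactness of a sequence of quasi-coherent sheaves can be tested on any affine open covering, this suffices.

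Fix $\beta$ and write $i_\beta \colon U_\beta \hookrightarrow X$ for the open immersion. Semi-separatedness of $X$ ensures that each $j_{\alpha_1, \ldots, \alpha_r}$ is an affine open immersion. By flat base change along the flat morphism $i_\beta$, there is a canonical isomorphism
$$i_\beta^* \, j_{\alpha_1, \ldots, \alpha_r *} \, j_{\alpha_1, \ldots, \alpha_r}^* \M \;\simeq\; k_{\alpha_1, \ldots, \alpha_r *} \, k_{\alpha_1, \ldots, \alpha_r}^* (\M|_{U_\beta}),$$
where $k_{\alpha_1, \ldots, \alpha_r} \colon U_\beta \cap U_{\alpha_1} \cap \cdots \cap U_{\alpha_r} \hookrightarrow U_\beta$ is the corresponding open immersion. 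Hence the restriction of~\eqref{cech-coresolution-eqn} to $U_\beta$ is precisely the augmented \v{C}ech complex of $\M|_{U_\beta}$ associated to the affine open covering $U_\beta = \bigcup_\alpha (U_\beta \cap U_\alpha)$ of the scheme $U_\beta$.

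The essential observation is that $\beta$ is an absorbing index for this covering, since $U_\beta \cap U_\beta = U_\beta$. I would then exhibit the classical contracting homotopy $s$ in this situation: on the summand indexed by an $(r+1)$-subset $S \subset \{1, \ldots, d\}$, the map $s$ sends to the summand indexed by $S \setminus \{\beta\}$ via the canonical identification of sheaves (with sign $(-1)^i$, where $i$ is the position of $\beta$ in the sorted listing of $S$) when $\beta \in S$, and to zero otherwise. The standard sign bookkeeping yields $ds + sd = \operatorname{id}$ in positive degrees, together with $s \circ \epsilon = \operatorname{id}_{\M|_{U_\beta}}$ at the augmentation level, so the restricted complex is split exact. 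Exactness of~\eqref{cech-coresolution-eqn} on $X$ follows.

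The only real obstacle is the sign bookkeeping in verifying the homotopy identity, but it is purely combinatorial and reflects the well-known acyclicity of the augmented simplicial chain complex of a full simplex, viewed here as a cone with apex~$\beta$. Naturality of the whole sequence in $\M$ is immediate, since every term and every differential in~\eqref{cech-coresolution-eqn} is functorially constructed from the adjunctions $(j_{\alpha_1, \ldots, \alpha_r}^*, \, j_{\alpha_1, \ldots, \alpha_r *})$.
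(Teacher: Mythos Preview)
Your argument is correct and is precisely the standard one: the paper's own proof is a one-line reference to Hartshorne~\cite[Lemma~III.4.2]{Har}, whose argument proceeds exactly as you describe (localize to a member of the cover and use the contracting homotopy coming from the fact that one open set in the induced cover is the whole space). There is nothing to add.
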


\begin{proof}
 This is standard; see~\cite[Lemma~III.4.2]{Har}.
\end{proof}

 The following theorem is the main result of this section.

\begin{thm} \label{qcoh-projdim-theorem}
 Let $X=\bigcup_{\alpha=1}^d U_\alpha$ be a finite affine open covering
of a quasi-compact semi-separated scheme $X$, and let $\P$ be
a quasi-coherent sheaf on~$X$.
 Then the projective dimension of $\P$ on $X$ is finite if and only if
the projective dimension of $j_\alpha^*\P$ on $U_\alpha$ is finite
for every\/ $1\le\alpha\le d$.
 More precisely: \par
\textup{(a)} if\/ $\pd_X\P\le m$, then\/ $\pd_U j^*\P\le m$ for
any affine open subscheme $U\subset X$ with the identity open
immersion morphism $j\:U\rarrow X$; \par
\textup{(b)} if\/ $\pd_{U_\alpha}j_\alpha^*\P\le m$ for all\/
$1\le\alpha\le d$, then\/ $\pd_X\P\le m+d-1$.
\end{thm}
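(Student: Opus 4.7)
My plan is to handle the two parts by two applications of the Ext\+adjunction lemma (Lemma~\ref{ext-adjunction}) for the exact adjoint pair $(j^*,j_*)$ associated with an affine open immersion $j\:U\to X$, using the adjunction in opposite ``directions'' in parts~(a) and~(b). For part~(a), since $X$ is semi-separated and $U$ is affine, both $j^*$ and $j_*$ are exact functors between $X\qcoh$ and $U\qcoh$, so Lemma~\ref{ext-adjunction} gives
$$
 \Ext_U^n(j^*\P,\K)\simeq\Ext_X^n(\P,j_*\K)
$$
for any $\K\in U\qcoh$ and any $n\ge 0$. Vanishing of the right-hand side for $n>m$ is immediate from $\pd_X\P\le m$, and ranging over all $\K$ yields $\pd_U j^*\P\le m$.

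For part~(b), I would fix an arbitrary test sheaf $\L\in X\qcoh$ and apply the \v Cech coresolution of Lemma~\ref{cech-coresolution} to $\L$ (crucially, \emph{not} to $\P$), producing an exact sequence
$$
 0\lrarrow\L\lrarrow D^0\lrarrow D^1\lrarrow\dotsb\lrarrow D^{d-1}\lrarrow 0,
$$
where $D^{r-1}=\bigoplus j_{V*}j_V^*\L$ with $V=U_{\alpha_1}\cap\dotsb\cap U_{\alpha_r}$ ranging over the $r$-fold intersections of the $U_\alpha$'s (each affine by semi-separatedness). Part~(a) applied inside the affine scheme $U_{\alpha_1}$ to the affine open inclusion $V\subseteq U_{\alpha_1}$ gives $\pd_V j_V^*\P\le m$, and a second application of Lemma~\ref{ext-adjunction} to the pair $(j_V^*,j_{V*})$ yields
$$
 \Ext_X^q(\P,j_{V*}j_V^*\L)\simeq\Ext_V^q(j_V^*\P,j_V^*\L)=0\qquad\text{for }q>m,
$$
so that $\Ext_X^q(\P,D^p)=0$ for every $p$ and every $q>m$.

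To finish, I would split the coresolution into short exact sequences $0\to Z^p\to D^p\to Z^{p+1}\to 0$ (with $Z^0=\L$ and $Z^{d-1}=D^{d-1}$) and chase the long exact sequence of $\Ext_X^*(\P,{-})$ iteratively: each step shifts the cohomological degree down by one, so after $d-1$ iterations one reaches $\Ext_X^{n-(d-1)}(\P,D^{d-1})$. The vanishing conditions $\Ext_X^{n-k}(\P,D^k)=0$ for $0\le k\le d-1$ are all implied by the single hypothesis $n>m+d-1$ (the tightest occurring at $k=d-1$), giving $\Ext_X^n(\P,\L)=0$, and hence $\pd_X\P\le m+d-1$.

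The main conceptual obstacle I expect is the apparent mismatch between the direction of the adjunction and the placement of $\P$ in $\Ext_X^n(\P,{-})$: since $\pd_V$ controls $\Ext_V(j_V^*\P,{-})$ in the \emph{first} slot, one must arrange for $j_{V*}({-})$ to appear in the \emph{second} slot of $\Ext_X$, which is precisely what forces the \v Cech coresolution to be applied to $\L$ rather than to $\P$. Once that move is spotted, the rest is a bookkeeping exercise in which the length $d-1$ of the coresolution produces exactly the ${+}\,d{-}1$ loss in the bound.
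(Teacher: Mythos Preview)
Your proposal is correct and follows essentially the same approach as the paper: part~(a) via the Ext-adjunction isomorphism $\Ext_U^n(j^*\P,\K)\simeq\Ext_X^n(\P,j_*\K)$, and part~(b) by applying the \v Cech coresolution to the test sheaf in the second variable, using part~(a) to bound $\pd_V j_V^*\P$ on each intersection, and then dimension-shifting along the length~$d-1$ coresolution. Your discussion makes the bookkeeping of the shift more explicit than the paper's terse ``straightforward induction,'' but the argument is the same.
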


\begin{proof}
 Part~(a) follows from the isomorphisms $\Ext_U^n(j^*\P,\M)\simeq
\Ext_X^n(\P,j_*\M)$ provided by Lemma~\ref{ext-adjunction} for
all quasi-coherent sheaves $\M$ on $U$ and integers $n\ge0$.

 To prove part~(b), consider the \v Cech
coresolution~\eqref{cech-coresolution-eqn} of a quasi-coherent
sheaf $\M$ on $X$ and use it to describe the groups $\Ext_X^n(\P,\M)$.
 By part~(a), the projective dimension of the quasi-coherent sheaf
$j_{\alpha_1,\dotsc,\alpha_r}^*\P$ on $\bigcap_{s=1}^r U_{\alpha_s}$,
where $1\le r\le d$, does not exceed that of
$j_{\alpha_1}^*\P$.
 The isomorphism provided by Lemma~\ref{ext-adjunction} then tells us that 
\[ \Ext_X^n(\P,j_{\alpha_1,\dotsc,\alpha_r}{}_*j_{\alpha_1,\dotsc,\alpha_r}^*\M)\simeq\Ext_{\bigcap_{s=1}^r U_{\alpha_s}}^n(j_{\alpha_1,\dotsc,\alpha_r}^*\P,j_{\alpha_1,\dotsc,\alpha_r}^*\M), \]
which vanishes for $n>m$.
Using the exactness of~\eqref{cech-coresolution-eqn}, a straightforward
induction shows that $\Ext_X^n(\P,\M)=0$ whenever $n\ge m+d$.
\end{proof}

\begin{rem}
 One can notice that the proof of
Theorem~\ref{qcoh-projdim-theorem}(a) does not need the open subscheme
$U\subset X$ to be affine.
 Rather, it is the open immersion \emph{morphism} $j\:U\rarrow X$ that
has to be affine for the argument to work (cf.\ the assumptions of
Lemma~\ref{homotopy-flatness-locality-lemma}(b)).
 On the other hand, for \emph{nonaffine} open immersion morphisms~$j$,
the assertion of Theorem~\ref{qcoh-projdim-theorem}(a) is certainly
\emph{not} true.
 For a counterexample, it suffices to take $X=\Spec k[x,y]$ to be
the affine plane over a field~$k$, and $U\subset X$ to be
the complement to a closed point, $U=X\setminus\{\mathfrak m\}$,
where $\mathfrak m=(x,y)\subset k[x,y]$.
 Then the structure sheaf $\O_X$ is a projective object in
the category $X\qcoh$; indeed, $\O_X$ corresponds to the free
$k[x,y]$\+module with one generator under the equivalence of $X\qcoh$
with the module category $k[x,y]\Modl$.
 But the object $\O_U=j^*\O_X$ is \emph{not} projective in $U\qcoh$;
in fact, $\Ext^1_U(\O_U,\O_U)\ne0$.
\end{rem}

\begin{lem} \label{countable-govorov-lazard}
 Any countably presentable flat module over an associative ring $R$
can be presented as a countable directed colimit of finitely generated
projective $R$\+modules.
 Consequently, the projective dimension of any countably presentable
flat module does not exceed\/~$1$.
\end{lem}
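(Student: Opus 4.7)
The plan is to deduce both assertions from the accessibility machinery developed in Section~\ref{preliminaries-on-categories-secn}. First, I would observe that a countably presentable flat $R$-module~$F$ is $\aleph_1$-presentable in the abelian category $R\Modl$ by the lemma preceding Proposition~\ref{flat-modules-accessible}; hence, by Proposition~\ref{flat-modules-accessible} applied with $\kappa=\aleph_1$, the module $F$ is also $\aleph_1$-presentable in the finitely accessible category $R\Modl_\fl$. Since the finitely presentable objects of $R\Modl_\fl$ are precisely the finitely generated projective $R$-modules (Proposition~\ref{flat-modules-accessible} with $\kappa=\aleph_0$, plus the fact that a finitely presentable flat module is projective) and $\aleph_0\triangleleft\aleph_1$, Proposition~\ref{sharply-smaller-cardinal} identifies $F$ as a retract of some countable (i.e., $\aleph_1$-small) directed colimit of finitely generated projective $R$-modules.

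The next step is to upgrade this ``retract of'' statement to an honest presentation of $F$ as a countable directed colimit of finitely generated projectives, using that the class of finitely generated projective $R$-modules is closed under direct summands. Writing $F$ as a retract of $X=\varinjlim_{n\in\mathbb{N}}P_n$ via $s\:F\to X$ and $p\:X\to F$ with $ps=\mathrm{id}_F$, one considers the idempotent $e=sp\:X\to X$ and realizes $F$ as the colimit of the $\omega$-chain $X\xrightarrow{e}X\xrightarrow{e}\cdots$\,; a diagonal argument then interleaves this chain with the countable directed systems presenting each copy of $X$, producing a countable directed diagram of finitely generated projectives with colimit~$F$. I expect this retract-removal to be the main obstacle, since one has to be careful that the interleaved diagram is directed and has the correct colimit; a clean alternative is to invoke the classical refinement of Govorov--Lazard \cite[Corollary~2.23]{GT}.

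For the projective-dimension bound, I would use that any countable directed poset admits a cofinal $\omega$-subchain, so one may assume $F=\varinjlim_{n\in\mathbb{N}}P_n$ for a chain $P_0\to P_1\to P_2\to\cdots$ of finitely generated projective $R$-modules. The standard mapping-telescope presentation of a colimit over~$\omega$ then yields a short exact sequence
$$
 0\lrarrow\bigoplus\nolimits_{n\in\mathbb{N}}P_n
 \xrightarrow{\,\mathrm{id}-\mathrm{shift}\,}
 \bigoplus\nolimits_{n\in\mathbb{N}}P_n\lrarrow F\lrarrow 0,
$$
which is a length-one projective resolution of~$F$ (the middle arrow is injective by a routine calculation, and arbitrary direct sums of projectives are projective). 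Therefore $\pd_R F\le 1$, as required.
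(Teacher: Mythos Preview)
Your argument is correct, and it is considerably more explicit than what the paper does: the paper's entire proof is the single sentence ``This is~\cite[Corollary~2.23]{GT}.''  So you are not so much reproducing the paper's proof as unpacking the cited reference using the paper's own accessibility framework.

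The comparison is worth noting.  The paper treats the lemma as a black-box import from~\cite{GT}, whereas you show how it falls out of Propositions~\ref{sharply-smaller-cardinal} and~\ref{flat-modules-accessible} already in hand, plus the classical fact that a finitely presentable flat module is projective.  This makes the result feel internal to the paper rather than external.  Your telescope argument for the projective-dimension bound is the standard one and is exactly what underlies the cited corollary in~\cite{GT}.  The one soft spot you correctly flag is the retract-removal step: the interleaving of the idempotent chain $X\xrightarrow{e}X\xrightarrow{e}\dotsb$ with the countable presentation of~$X$ is a genuine bookkeeping exercise (one must repeatedly use finite presentability of the~$P_n$ to factor~$e$ through the system and then pass to a cofinal subsystem to restore coherence), and you are right that invoking~\cite[Corollary~2.23]{GT} directly is the cleanest way to dispatch it---which is precisely what the paper does.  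An alternative you do not mention, but which also works: for an uncountable regular~$\kappa$ the ``retract'' in Proposition~\ref{sharply-smaller-cardinal} is in fact redundant, since splitting an idempotent via an $\omega$\+chain is itself a $\kappa$\+small filtered colimit and can be absorbed into the indexing diagram.
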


\begin{proof}
 This is~\cite[Corollary~2.23]{GT}.
\end{proof}

\begin{cor} \label{direct-limits-of-flat-qcoh-of-finite-projdim}
 Let $X=\bigcup_{\alpha=1}^d U_\alpha$ be a finite affine open covering
of a quasi-compact semi-separated scheme~$X$.
 Then the projective dimension of any locally countably presentable
flat quasi-coherent sheaf on $X$ does not exceed~$d$.
 Therefore, any flat quasi-coherent sheaf on $X$ is
an\/ $\aleph_1$\+directed colimit of flat quasi-coherent sheaves
of projective dimension~$\le d$.
\end{cor}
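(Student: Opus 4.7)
The plan is to obtain the corollary as a direct combination of three ingredients already assembled in the excerpt: Lemma~\ref{countable-govorov-lazard}, Theorem~\ref{qcoh-projdim-theorem}(b), and Theorem~\ref{flat-qcoh-on-qcomp-qsep}. No new idea seems required; the work is essentially bookkeeping of the bounds.

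First I would prove the quantitative statement about projective dimension. Let $\F$ be a locally countably presentable flat quasi-coherent sheaf on~$X$. For each index $1\le\alpha\le d$, the $\O(U_\alpha)$\+module $\F(U_\alpha)=j_\alpha^*\F$ is flat and countably presentable by definition. Applying Lemma~\ref{countable-govorov-lazard} to each of these modules, I obtain $\pd_{U_\alpha}j_\alpha^*\F\le 1$ for every~$\alpha$. Since $X$ is quasi-compact and semi-separated, Theorem~\ref{qcoh-projdim-theorem}(b) is applicable, and with $m=1$ it yields
\[
 \pd_X\F \le 1+d-1 = d,
\]
as desired.

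For the second assertion, I would invoke Theorem~\ref{flat-qcoh-on-qcomp-qsep}: since any quasi-compact semi-separated scheme is in particular quasi-compact quasi-separated, every flat quasi-coherent sheaf on $X$ is an $\aleph_1$\+directed colimit of locally countably presentable flat quasi-coherent sheaves. By the first part of the corollary just established, every term of such a diagram has projective dimension at most~$d$ in the abelian category $X\qcoh$. This expresses the given flat sheaf as an $\aleph_1$\+directed colimit of flat quasi-coherent sheaves of projective dimension~$\le d$, which is the claim.

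There is no real obstacle here; the only points requiring a moment's thought are that ``locally countably presentable'' is a well-posed local notion (guaranteed by Lemma~\ref{locality-generatedness-presentability}) so that one genuinely has countable presentability of each $\F(U_\alpha)$, and that the bound $m=1$ from Lemma~\ref{countable-govorov-lazard} is uniform across all the~$U_\alpha$, so that Theorem~\ref{qcoh-projdim-theorem}(b) can be applied with a single value of~$m$.
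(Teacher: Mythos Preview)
Your proposal is correct and follows exactly the approach the paper takes: the first assertion is deduced from Lemma~\ref{countable-govorov-lazard} together with Theorem~\ref{qcoh-projdim-theorem}(b), and the second from Theorem~\ref{flat-qcoh-on-qcomp-qsep} combined with the first. The paper's proof is terser but identical in substance.
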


\begin{proof}
 The first assertion easily follows from
Theorem~\ref{qcoh-projdim-theorem}
and Lemma~\ref{countable-govorov-lazard}.
 The second assertion is, in view of the first assertion,
a corollary of Theorem~\ref{flat-qcoh-on-qcomp-qsep}
(which is one of our main theorems).
\end{proof}

\begin{rem} \label{enough-countably-flats-remark}
 By~\cite[Section~2.4]{M-n} or~\cite[Lemma~A.1]{EP}, every
quasi-coherent sheaf $\M$ on a quasi-compact semi-separated scheme is
a quotient sheaf of a flat quasi-coherent sheaf~$\F$.
 The stronger result of~\cite[Lemma~4.1.1]{Pcosh} tells that one can
choose $\F$ to be a \emph{very flat} quasi-coherent sheaf; then,
for any affine open subscheme $U\subset X$, the $\O(U)$\+module
$\F(U)$ is flat of projective dimension at most~$1$.

 Theorem~\ref{flat-qcoh-on-qcomp-qsep} together with
Lemma~\ref{countable-govorov-lazard} allow us to arrive to the same
conclusion in a different way.
 Let $\M$ be a quasi-coherent sheaf on~$X$.
 By~\cite[Section~2.4]{M-n} or~\cite[Lemma~A.1]{EP}, \,$\M$ is
a quotient sheaf of a flat quasi-coherent sheaf~$\G$.
 By Theorem~\ref{flat-qcoh-on-qcomp-qsep}, \,$\G$ is a directed colimit
of some locally countably presentable flat quasi-coherent sheaves
$(\F_\xi)_{\xi\in\Xi}$.
 By Lemma~\ref{countable-govorov-lazard}, the $\O(U)$\+module
$\F_\xi(U)$ is flat of projective dimension at most~$1$ for any
affine open subscheme $U\subset X$ and every $\xi\in\Xi$.
 It remains to observe that $\G$, and consequently also $\M$, is
a quotient sheaf of the quasi-coherent sheaf
$\bigoplus_{\xi\in\Xi}\F_\xi$ whose module of sections over
any affine open subscheme $U\subset X$ also has projective
dimension at most~$1$.
\end{rem}

\Section{Preliminaries on Ext-Orthogonal Classes}
\label{prelim-ext-orthogonal-secn}

 Let $\sK$ be an exact category.
 We are interested in $\Ext^1$\+orthogonal and
$\Ext^{\ge1}$\+or\-thogonal pairs of classes of objects in~$\sK$.
 The classical theory of such Ext-orthogonal classes involves
the concepts of a \emph{cotorsion pair} and particularly
a \emph{complete} cotorsion pair.
 These notions include conditions that are not relevant for the purposes
of the present paper, so we discuss a more general setting.

 Let $\sA$ and $\sB\subset\sK$ be two classes of objects.
 The notation $\sA^{\perp_1}\subset\sK$ stands for the class of
all objects $X\in\sK$ such that $\Ext^1_\sK(A,X)=0$ for all $A\in\sA$.
 Dually, ${}^{\perp_1}\sB\subset\sK$ is the class of all objects
$Y\in\sK$ such that $\Ext^1_\sK(Y,B)=0$ for all $B\in\sB$.

 Furthermore, $\sA^{\perp_{\ge1}}\subset\sK$ denotes the class of
all objects $X\in\sK$ such that $\Ext^n_\sK(A,X)=0$ for all $A\in\sA$
and $n\ge1$.
 Dually, ${}^{\perp_{\ge1}}\sB\subset\sK$ is the class of all objects
$Y\in\sK$ such that $\Ext^n_\sK(Y,B)=0$ for all $B\in\sB$ and $n\ge 1$.

 A class of objects $\sA\subset\sK$ is said to be \emph{generating}
if every object of $\sK$ is an admissible quotient object of
(i.~e., the codomain of an admissible epimorphism acting from)
an object from~$\sA$.
 Dually, a class of objects $\sB\subset\sK$ is said to be
\emph{cogenerating} if every object of $\sK$ is an admissible
subobject of (i.~e., the domain of an admissible monomorphism acting
into) an object from~$\sB$.

 More generally, a class of objects $\sA\subset\sK$ is said to be
\emph{self-generating} if for any admissible epimorphism $K\rarrow A$
in $\sK$ with $A\in\sA$ there exists a morphism $A'\rarrow K$ in $\sK$
with $A'\in\sA$ such that the composition $A'\rarrow K\rarrow A$ is
an admissible epimorphism in~$\sK$.
 A class of objects $\sB\subset\sK$ is said to be
\emph{self-cogenerating} if for any admissible monomorphism $B\rarrow K$
in $\sK$ with $B\in\sB$ there exists a morphism $K\rarrow B'$ in $\sK$
with $B'\in\sB$ such that the composition $B\rarrow K\rarrow B'$ is
a admissible monomorphism in~$\sK$.
 Clearly, any generating class is self-generating, and any cogenerating
class is self-cogenerating.

\begin{lem} \label{garcia-rozas}
 Let\/ $\sK$ be an exact category and\/ $\sA\subset\sK$ be
a self-generating class of objects closed under the kernels
of admissible epimorphisms.
 Then\/ $\sA^{\perp_{\ge1}}=\sA^{\perp_1}\subset\sK$.

 Dually, if\/ $\sB\subset\sK$ is a self-cogenerating class of objects
closed under the cokernels of admissible monomorphisms, then\/
${}^{\perp_{\ge1}}\sB={}^{\perp_1}\sB\subset\sK$.
\end{lem}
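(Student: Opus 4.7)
I would argue by induction on $n\ge1$. The base case $n=1$ is nothing but the hypothesis $X\in\sA^{\perp_1}$. For the inductive step, I fix $A\in\sA$ and represent an arbitrary class in $\Ext_\sK^n(A,X)$ by a Yoneda $n$\+extension
$$
 \xi\:\quad 0\lrarrow X\lrarrow E_{n-1}\lrarrow\dotsb\lrarrow E_0\lrarrow A\lrarrow 0.
$$
Invoking the self-generating hypothesis on the admissible epimorphism $E_0\rarrow A$, I obtain an object $A'\in\sA$ and a morphism $h\:A'\rarrow E_0$ whose composition $A'\rarrow A$ is an admissible epimorphism. Let $K$ denote the kernel of $A'\rarrow A$; closure of $\sA$ under kernels of admissible epimorphisms (applied to this admissible epimorphism between objects of $\sA$) places $K$ in~$\sA$.

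The heart of the argument is a dimension shift. Writing $Z=\ker(E_0\rarrow A)$, I factor $[\xi]$ as the Yoneda product of the short exact sequence $0\rarrow Z\rarrow E_0\rarrow A\rarrow0$ (whose class lives in $\Ext_\sK^1(A,Z)$) with an $(n-1)$\+extension $0\rarrow X\rarrow E_{n-1}\rarrow\dotsb\rarrow E_1\rarrow Z\rarrow0$. Pulling the $1$\+extension back along $A'\rarrow A$ produces $0\rarrow Z\rarrow E_0\times_A A'\rarrow A'\rarrow0$, and $h$ furnishes a section $(h,\mathrm{id}_{A'})\:A'\rarrow E_0\times_A A'$, so this pullback is split. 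Consequently the image of $[\xi]$ under the map $\Ext_\sK^n(A,X)\rarrow\Ext_\sK^n(A',X)$ induced by $A'\rarrow A$ vanishes.

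To conclude, I would apply the long exact sequence of Ext for the short exact sequence $0\rarrow K\rarrow A'\rarrow A\rarrow0$, obtaining exactness of
$$
 \Ext_\sK^{n-1}(K,X)\lrarrow\Ext_\sK^n(A,X)\lrarrow\Ext_\sK^n(A',X).
$$
Since $K\in\sA$, the inductive hypothesis gives $\Ext_\sK^{n-1}(K,X)=0$, so the second map is injective; since $[\xi]$ was shown to map to zero, $[\xi]=0$. This establishes $\Ext_\sK^n(A,X)=0$ for all $A\in\sA$ and completes the induction. The dual statement is handled by the mirror argument, in which pullbacks of admissible epimorphisms are replaced by pushouts of admissible monomorphisms, the left-most term of the Yoneda extension is enlarged using self-cogeneration, and closure under cokernels of admissible monomorphisms supplies the object needed for the final long exact sequence.

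The main technical obstacle lies in the exact-category bookkeeping: one needs that pullbacks of admissible epimorphisms exist and retain the kernel, that any Yoneda $n$\+extension factors through a chosen ``syzygy'' such as $Z$, and that pulling back a Yoneda extension computes the image of its class under the induced map on Ext. Each of these is standard in exact categories (see~\cite{Bueh}), but they must be applied carefully to ensure the splitting argument really forces vanishing of the pulled-back class.
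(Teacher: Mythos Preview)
Your argument is correct and is precisely the standard dimension-shifting proof that the paper invokes by citing~\cite[Lemma~6.17]{Sto-ICRA} and~\cite[Lemma~4.25]{SaoSt}; the paper does not spell out any details beyond those references, and your write-up is a faithful elaboration of that same approach.
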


\begin{proof}
 This is a partial generalization of the standard characterization of
hereditary cotorsion pairs in abelian/exact categories (going back
to Garc\'\i a Rozas~\cite[Theorem~1.2.10]{GR}).
 The argument from~\cite[Lemma~6.17]{Sto-ICRA}
or~\cite[Lemma~4.25]{SaoSt} applies.
\end{proof}

 Let $\sB\subset\sK$ be a class of objects.
 An object $M\in\sK$ is said to be \emph{$\sB$\+periodic} if there
exists an admissible short exact sequence $0\rarrow M\rarrow B\rarrow M
\rarrow0$ in $\sK$ with $B\in\sB$.
 The following two lemmas are essentially due to Bazzoni,
Cort\'es-Izurdiaga, and Estrada~\cite{BCE}.

\begin{lem} \label{dimension-shifting}
 Let\/ $\sK$ be an exact category and\/ $\sA$, $\sB\subset\sK$ be
a pair of classes of objects such that\/ $\Ext^n_\sK(A,B)=0$ for all
$A\in\sA$, \,$B\in\sB$, and $n\ge1$.
 Let\/ $M\in\sK$ be a\/ $\sB$\+periodic object.
 Then there is an isomorphism of abelian groups $\Ext^n_\sK(A,M)
\simeq\Ext^1_\sK(A,M)$ for all $A\in\sA$ and $n\ge1$.
\end{lem}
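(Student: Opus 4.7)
The plan is a standard dimension-shifting argument. Fix an object $A \in \sA$ and an admissible short exact sequence $0 \to M \to B \to M \to 0$ in $\sK$ witnessing that $M$ is $\sB$-periodic, with $B \in \sB$. Apply the covariant functor $\Ext^*_\sK(A,-)$ to obtain the long exact sequence of Yoneda Ext groups in the exact category $\sK$ (this standard tool for exact categories can be found in the survey \cite{Bueh}).

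For every $n \ge 1$, the relevant portion of the long exact sequence reads
\[
 \Ext^n_\sK(A,B) \lrarrow \Ext^n_\sK(A,M) \overset{\delta}{\lrarrow} \Ext^{n+1}_\sK(A,M) \lrarrow \Ext^{n+1}_\sK(A,B).
\]
By hypothesis the two outer terms vanish (since $B \in \sB$ and $n, n+1 \ge 1$), so the connecting homomorphism $\delta$ is an isomorphism $\Ext^n_\sK(A,M) \simeq \Ext^{n+1}_\sK(A,M)$.

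Iterating this isomorphism starting from $n = 1$ gives, by a trivial induction on $n$, the desired chain
\[
 \Ext^1_\sK(A,M) \simeq \Ext^2_\sK(A,M) \simeq \dotsb \simeq \Ext^n_\sK(A,M)
\]
for every $n \ge 1$ and every $A \in \sA$. There is no real obstacle: the only nontrivial ingredient is the existence of the long exact sequence of Yoneda Ext groups associated with an admissible short exact sequence in an exact category, which is a well-documented fact.
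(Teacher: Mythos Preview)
Your proof is correct and is exactly the standard dimension-shifting argument; the paper's own proof simply cites \cite[Lemma~4.1]{BCE}, which contains precisely this argument. There is nothing to add.
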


\begin{proof}
 See~\cite[Lemma~4.1]{BCE}.
\end{proof}

 Let $\sA$ be a full subcategory closed under extensions in an exact
category~$\sK$.
 Then the class of all short exact sequences in $\sK$ with the terms
belonging to $\sA$ defines an exact category structure on~$\sA$ (called
the exact category structure \emph{inherited from} the exact
category structure of~$\sK$).
 We will speak about admissible monomorphisms, admissible epimorphisms,
and admissible short exact sequences in $\sA$ presuming the inherited
exact category structure on~$\sA$.

\begin{lem} \label{periodic-2-out-of-3-lemma}
 Let\/ $\sK$ be an exact category and\/ $\sA$, $\sB\subset\sK$ be
a pair of classes of objects such that\/ $\Ext^n_\sK(A,B)=0$ for all
$A\in\sA$, \,$B\in\sB$, and $n\ge1$.
 Assume that the class\/ $\sA$ is closed under extensions in\/ $\sK$,
and let $M\in\sK$ be a\/ $\sB$\+periodic object.
 Then the class of objects\/ $\sA\cap{}^{\perp_1}M\subset\sK$ is
closed under the kernels of admissible epimorphisms, the cokernels of
admissible monomorphisms, and extensions in the exact category\/~$\sA$.
\end{lem}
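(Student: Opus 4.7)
The plan is to reduce everything to Lemma~\ref{dimension-shifting}. Since $M$ is $\sB$-periodic and $\Ext^n_\sK(A,B)=0$ for all $A\in\sA$, $B\in\sB$, $n\ge1$, that lemma supplies the dimension-shifting isomorphism $\Ext^n_\sK(A,M)\simeq\Ext^1_\sK(A,M)$ for every $A\in\sA$ and every $n\ge1$. Consequently, for an object $A\in\sA$, membership in ${}^{\perp_1}M$ is equivalent to the vanishing of $\Ext^n_\sK(A,M)$ for any single (equivalently, every) integer $n\ge1$. This is the only nontrivial input.

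Given an admissible short exact sequence $0\to A'\to A\to A''\to 0$ in the exact category $\sA$ inherited from $\sK$ (so all three terms lie in $\sA$ and the sequence is admissible in $\sK$), I would apply $\Hom_\sK({-},M)$ to obtain the contravariant long exact sequence of Yoneda Ext groups. For closure under extensions (assume $A',A''\in\sA\cap{}^{\perp_1}M$; conclude the same for $A$), the segment $\Ext^1_\sK(A'',M)\to\Ext^1_\sK(A,M)\to\Ext^1_\sK(A',M)$ immediately yields $\Ext^1_\sK(A,M)=0$, and $A\in\sA$ by the standing assumption that $\sA$ is closed under extensions in $\sK$. For closure under kernels of admissible epimorphisms (assume $A,A''\in\sA\cap{}^{\perp_1}M$; conclude the same for $A'$), the segment $\Ext^1_\sK(A,M)\to\Ext^1_\sK(A',M)\to\Ext^2_\sK(A'',M)$ does the job, since the right-hand group vanishes by dimension shifting applied to~$A''$.

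The only case that requires one extra step, and is in that sense the main obstacle, is closure under cokernels of admissible monomorphisms: from $A,A'\in\sA\cap{}^{\perp_1}M$ one wants $\Ext^1_\sK(A'',M)=0$. The degree-one long exact sequence alone is insufficient, since it only yields a surjection from $\Hom_\sK(A',M)$ onto $\Ext^1_\sK(A'',M)$. Instead, I would pass one degree higher and read off the segment $\Ext^1_\sK(A',M)\to\Ext^2_\sK(A'',M)\to\Ext^2_\sK(A,M)$: the left-hand group vanishes because $A'\in{}^{\perp_1}M$, and the right-hand group vanishes by dimension shifting applied to $A\in\sA\cap{}^{\perp_1}M$. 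This forces $\Ext^2_\sK(A'',M)=0$, and a final application of dimension shifting, now to $A''\in\sA$, produces $\Ext^1_\sK(A'',M)\simeq\Ext^2_\sK(A'',M)=0$, as required.

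No further obstacles are anticipated; the argument is entirely formal once Lemma~\ref{dimension-shifting} is in hand. One should only verify, at each step, that the third term of the sequence lies in $\sA$ so that dimension shifting is applicable to it; but this is automatic from the convention that admissible short exact sequences in $\sA$ have all three terms in $\sA$.
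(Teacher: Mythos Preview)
Your proposal is correct and is precisely the argument the paper has in mind: the paper's proof consists of the single sentence ``The argument from~\cite[Lemma~4.4]{BCE} applies,'' and what you have written is exactly that argument, spelled out in the exact-category setting using Lemma~\ref{dimension-shifting} as the dimension-shifting input.
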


\begin{proof}
 The argument from~\cite[Lemma~4.4]{BCE} applies.
\end{proof}

Another consequence of Lemma~\ref{dimension-shifting}
is the following interaction of $\sB$\+periodic
objects with objects of finite projective dimension.

\begin{lem} \label{fin-proj-dim-are-ok}
 Let\/ $\sK$ be an exact category and\/ $\sA$, $\sB\subset\sK$ be
a pair of classes of objects such that\/ $\Ext^n_\sK(A,B)=0$ for all
$A\in\sA$, \,$B\in\sB$, and $n\ge1$.
 Let $M\in\sK$ be a\/ $\sB$\+periodic object and $A\in\sA$ be
an object having finite projective dimension in~$\sK$.
 Then\/ $\Ext_\sK^n(A,M)=0$ for all $n\ge1$.
 In particular, the class of objects\/ $\sA\cap{}^{\perp_1}M\subset\sK$
contains all objects of $\sA$ having finite projective dimension
in~$\sK$.
\end{lem}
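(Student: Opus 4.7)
The plan is to reduce this lemma directly to Lemma~\ref{dimension-shifting}. The hypotheses on $\sA$, $\sB$, and $M$ are exactly those of Lemma~\ref{dimension-shifting}, which already provides, for every $A \in \sA$, an isomorphism $\Ext^n_\sK(A,M) \simeq \Ext^1_\sK(A,M)$ valid for all $n \ge 1$. So the entire content of the lemma must come from combining this periodicity of Ext-groups with the finiteness of projective dimension.

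Concretely, suppose $A \in \sA$ satisfies $\pd_\sK A \le m$ for some integer $m \ge 0$. By definition of projective dimension in an exact category (as used in Section~\ref{qcoh-finite-projdim-secn}), $\Ext^k_\sK(A,K) = 0$ for every object $K \in \sK$ and every $k > m$; in particular $\Ext^{m+1}_\sK(A,M) = 0$. Applying Lemma~\ref{dimension-shifting} with $n = m+1$ yields $\Ext^1_\sK(A,M) \simeq \Ext^{m+1}_\sK(A,M) = 0$. Then, for any $n \ge 1$, applying Lemma~\ref{dimension-shifting} again gives $\Ext^n_\sK(A,M) \simeq \Ext^1_\sK(A,M) = 0$, which is the first assertion.

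The ``in particular'' clause is then just a reformulation: an object $A \in \sA$ of finite projective dimension in $\sK$ satisfies $\Ext^1_\sK(A,M) = 0$, hence $A \in {}^{\perp_1}M$, so $A \in \sA \cap {}^{\perp_1}M$. There is no real obstacle here; the lemma is a bookkeeping corollary of Lemma~\ref{dimension-shifting}, and the only point to verify is that the definition of projective dimension used in the statement agrees with the one that forces $\Ext^k$ to vanish in all high degrees simultaneously, which is exactly the convention adopted at the start of Section~\ref{qcoh-finite-projdim-secn}.
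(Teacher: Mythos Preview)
Your proof is correct and follows exactly the same approach as the paper, which simply says ``Follows immediately from Lemma~\ref{dimension-shifting}.'' You have merely spelled out the details of that immediate deduction.
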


\begin{proof}
 Follows immediately from Lemma~\ref{dimension-shifting}. 
\end{proof}

 The next definition appeared already in
Section~\ref{flat-sheaves-on-qcomp-qsep-schemes-secn}.
 By a \emph{well-ordered chain} (of objects and morphisms) in
a category $\sK$ one means a directed diagram
$(K_i\to K_j)_{0\le i<j<\alpha}$ indexed by an ordinal~$\alpha$.
 For convenience, we put $K_\alpha=\varinjlim_{i<\alpha}K_i$ (assuming
that the directed colimit exists).
 A well-ordered chain is said to be \emph{smooth} if $K_j=
\varinjlim_{i<j}K_i$ for all limit ordinals $0<j<\alpha$.

 A smooth well-ordered chain $(F_i\to F_j)_{0\le i<j<\alpha}$ in $\sK$
is said to be a \emph{filtration} (of the object $F_\alpha=
\varinjlim_{i<\alpha}F_i$) if $F_0=0$ and the morphisms
$F_i\rarrow F_{i+1}$ are admissible monomorphisms in $\sK$ for all
ordinals $i<i+1<\alpha$.
 In this case, the object $F_\alpha$ is said to be \emph{filtered by}
the cokernels of the morphisms $F_i\rarrow F_{i+1}$,
\,$0\le i<i+1<\alpha$.
 In an alternative terminology, the object $F_\alpha$ is said to be
a \emph{transfinitely iterated extension} of the objects
$\coker(F_i\to\nobreak F_{i+1})$, \,$0\le i<i+1<\alpha$, in this case.

 Notice that we make \emph{no} assumptions of exactness of directed
colimits in $\sK$ (or even \emph{existence} of any other directed 
colimits than those appearing in a particular smooth chain).
 Given a class of objects $\sS\subset\sK$, the class of all objects
filtered by objects from $\sS$ is denoted by $\Fil(\sS)\subset\sK$.

 The following result is known classically as
the \emph{Eklof lemma}~\cite[Lemma~6.2]{GT}.

\begin{lem} \label{eklof-lemma}
 For any exact category\/ $\sK$ and any class of objects\/
$\sB\subset\sK$, the class ${}^{\perp_1}\sB\subset\sK$ is closed under
transfinitely iterated extensions in\/~$\sK$.
 In other words, ${}^{\perp_1}\sB=\Fil({}^{\perp_1}\sB)$.
\end{lem}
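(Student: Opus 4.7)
The plan is to run the classical Eklof transfinite-induction argument in the general exact category setting, taking care to use only those directed colimits that are actually supplied by smoothness of the chain. Fix an object $B\in\sB$ and a filtration $(F_i\to F_j)_{0\le i<j<\alpha}$ with $F_0=0$, admissible monomorphisms $F_i\to F_{i+1}$, colimit $F_\alpha=\varinjlim_{i<\alpha}F_i$, and every cokernel $S_{i+1}=\coker(F_i\to F_{i+1})$ in ${}^{\perp_1}\sB$. Given an admissible short exact sequence
$$\xi\colon 0\lrarrow B\lrarrow E\overset{p}\lrarrow F_\alpha\lrarrow 0,$$
the goal is to produce a splitting $s\:F_\alpha\to E$.

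The first step is to pull $\xi$ back along each structural morphism $F_i\to F_\alpha$. Since pullbacks along admissible epimorphisms exist in an exact category and remain admissible short exact sequences, we obtain for each $i\le\alpha$ a sequence $\xi_i\:0\to B\to E_i\to F_i\to 0$, together with compatible morphisms $E_i\to E_j$ over $F_i\to F_j$, with $E_\alpha=E$. By transfinite induction on $i\le\alpha$ the plan is to build a coherent family of sections $s_i\:F_i\to E_i$ of $E_i\to F_i$ satisfying $(E_i\to E_j)\circ s_i=s_j\circ(F_i\to F_j)$ for $i\le j$; evaluating at $i=\alpha$ will produce the required splitting of~$\xi$.

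At the base $i=0$ we set $s_0=0$. The key step is the successor case $i\to i+1$, which is where the Ext-vanishing hypothesis is used. The admissible short exact sequence $0\to F_i\to F_{i+1}\to S_{i+1}\to 0$ yields a long exact sequence
$$\Hom_\sK(F_{i+1},B)\lrarrow\Hom_\sK(F_i,B)\lrarrow\Ext^1_\sK(S_{i+1},B)\lrarrow\Ext^1_\sK(F_{i+1},B)\lrarrow\Ext^1_\sK(F_i,B),$$
and the hypothesis $S_{i+1}\in{}^{\perp_1}\sB$ kills the middle term. Injectivity of $\Ext^1_\sK(F_{i+1},B)\to\Ext^1_\sK(F_i,B)$ implies that $\xi_{i+1}$ splits (since its pullback $\xi_i$ is split by $s_i$), so we may choose some section $s'_{i+1}$. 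It remains to correct $s'_{i+1}$ so that it extends $s_i$: the difference between $s'_{i+1}\circ(F_i\to F_{i+1})$ and $(E_i\to E_{i+1})\circ s_i$ composes to zero with $p$ and therefore factors through $B$, giving a morphism $\delta\:F_i\to B$; surjectivity of $\Hom_\sK(F_{i+1},B)\to\Hom_\sK(F_i,B)$ lifts $\delta$ to $\widetilde\delta\:F_{i+1}\to B$, and $s_{i+1}:=s'_{i+1}-\widetilde\delta$ (viewing $\widetilde\delta$ through $B\hookrightarrow E_{i+1}$) is the required compatible section.

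At a limit ordinal $j$, smoothness gives $F_j=\varinjlim_{i<j}F_i$ in $\sK$; the morphisms $F_i\xrightarrow{s_i}E_i\to E_j$ assemble into a cocone, so the universal property of the colimit yields a unique $s_j\:F_j\to E_j$, and the identities $(E_j\to F_j)\circ s_j\circ(F_i\to F_j)=(F_i\to F_j)$ together with the uniqueness clause give $(E_j\to F_j)\circ s_j=\mathrm{id}_{F_j}$, while coherence with each earlier $s_i$ is automatic from the construction. The expected main obstacle is the bookkeeping in the successor step, specifically the passage from ``some section exists'' to ``a section extending $s_i$ exists'', which is exactly what forces the appeal to the long exact Ext sequence in both degrees $0$ and $1$; by contrast, the limit step is essentially formal once smoothness is available.
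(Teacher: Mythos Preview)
Your proof plan is correct and is precisely the classical Eklof argument that the paper invokes by citing~\cite[Lemma~4.5]{PR}; you have spelled out the details that the paper leaves to the reference. The one point worth underscoring, since the paper emphasizes that \emph{no} exactness or existence assumptions on directed colimits are made beyond those built into the smooth chain, is that your limit step correctly uses only the universal property of $F_j=\varinjlim_{i<j}F_i$ to define $s_j\:F_j\to E_j$ and never needs $E_j$ to be the colimit of the~$E_i$.
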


\begin{proof}
 The argument from~\cite[Lemma~4.5]{PR} is applicable.
 The generalization from abelian categories (considered in~\cite{PR})
to exact categories is straightforward.
 
 Alternatively, the (somewhat more complicated) argument
from~\cite[Proposition~2.10]{Sto2} (based on~\cite[Lemma~A.3]{Sto2})
is applicable as well.
 Let us point out that the definition of a filtration
in~\cite[Definition~2.9]{Sto2} is more restrictive than our definition
above, in that (in the notation above) all the morphisms
$F_i\rarrow F_j$, \,$0\le i<j\le\alpha$ are assumed to be
admissible monomorphisms in~\cite{Sto2}.
 For counterexamples showing that the two definitions are indeed
different, see~\cite[Examples~4.4]{PR}.
 Still, the argument from~\cite{Sto2} works under our more relaxed
assumptions as well.
\end{proof}

\Section{Exact Categories with Exact Directed Colimits}
\label{exact-categs-exact-dirlims-secn}

 The aim of this section is work out a common generalization
of~\cite[Proposition~7.16]{PS5} and the arguments
in~\cite[Lemma~4.6 and Theorem~4.7]{BCE}.
 Essentially, we replace an abelian category $\sA$ in the setting
of~\cite[Section~7.5]{PS5} by an exact one.
 In our intended application in the next
Sections~\ref{categorical-cotorsion-periodicity-secn}\+-%
\ref{qcoh-cotorsion-periodicity-secn}, \,$\sA$~will be a self-resolving
subcategory (typically, the left class of a hereditary cotorsion pair)
closed under directed colimits in an abelian/exact category~$\sK$.
 Thereby, we obtain a version of the arguments in~\cite[Section~4]{BCE}
\emph{not} using any purity considerations (but only flatness).

 Let $\sA$ be an exact category in which all (set-indexed)
directed colimits exist.
 We will say that $\sA$ \emph{has exact directed colimits} if any
directed colimit of admissible short exact sequences is an admissible
short exact sequence in~$\sA$.
 In this case, all set-indexed coproducts also exist in $\sA$, and
admissible short exact sequences are preserved by coproducts.
 Notice also that any additive category with countable directed
colimits is idempotent-complete.

 The following proposition is formulated and proved in a form
making the similarity with both~\cite[Proposition~7.16]{PS5}
and~\cite[proofs of Lemma~4.6 and Theorem~4.7]{BCE} apparent.

\begin{prop} \label{transfinite-extensions-direct-limits}
 Let\/ $\sA$ be an exact category with exact directed colimits, and
let\/ $\sC\subset\sA$ be a class of objects closed under the cokernels
of admissible monomorphisms and extensions in\/~$\sA$.
 Then the following conditions are equivalent:
\begin{enumerate}
\item $\sC$ is closed under transfinitely iterated extensions
in\/~$\sA$;
\item $\sC$ is closed under the directed colimits of smooth well-ordered
chains of admissible monomorphisms in\/~$\sA$;
\item $\sC$ is closed under the directed colimits of well-ordered chains
of admissible monomorphisms in\/~$\sA$;
\item $\sC$ is closed under the directed colimits of well-ordered chains
in~$\sA$;
\item $\sC$ is closed under directed colimits in\/~$\sA$.
\end{enumerate}
\end{prop}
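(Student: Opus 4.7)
The plan is to establish the cycle (5) $\Rightarrow$ (4) $\Rightarrow$ (3) $\Rightarrow$ (2) $\Rightarrow$ (1) $\Rightarrow$ (5). The four initial implications will be tautological, each condition being a specialization of the preceding one to a more restricted class of diagrams (smooth chains are chains, chains of admissible monomorphisms are well-ordered chains, well-ordered chains are directed diagrams). For (2) $\Rightarrow$ (1), given a filtration $(F_\xi)_{\xi\le\alpha}$ with $F_0 = 0$ and successive cokernels in $\sC$, I show by transfinite induction on $\xi$ that $F_\xi \in \sC$: the successor step uses closure of $\sC$ under extensions applied to $0 \to F_\xi \to F_{\xi+1} \to \coker(F_\xi \to F_{\xi+1}) \to 0$, while the limit step applies hypothesis (2) to the smooth chain $(F_\eta)_{\eta<\xi}$ of admissible monomorphisms, whose terms lie in $\sC$ by the inductive hypothesis.

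The substantial direction is (1) $\Rightarrow$ (5). Given a directed diagram $(A_i)_{i\in I}$ in $\sC$ with colimit $M$ in $\sA$, the plan is to exhibit $M$ as a transfinite extension of admissible quotients of the $A_i$'s. I will well-order $I$ as $\{i_\xi : \xi<\lambda\}$ and construct by transfinite recursion on $\xi\le\lambda$ a smooth well-ordered chain of admissible monomorphisms $F_\xi \to F_{\xi+1}$ together with compatible morphisms $F_\xi \to M$, arranged so that $F_0 = 0$ and $F_\lambda = M$. At limit ordinals I set $F_\xi := \varinjlim_{\eta<\xi} F_\eta$, and the assumption that directed colimits in $\sA$ are exact guarantees that the transition maps remain admissible monomorphisms. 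At the successor step, $F_{\xi+1}$ is to be constructed from $F_\xi$ and the canonical morphism $A_{i_\xi} \to M$ via a pushout-type construction that produces an admissible short exact sequence $0 \to F_\xi \to F_{\xi+1} \to C_\xi \to 0$ in $\sA$, where $C_\xi$ is an admissible quotient of $A_{i_\xi}$; closure of $\sC$ under cokernels of admissible monomorphisms then gives $C_\xi \in \sC$. Applying hypothesis (1) to the resulting filtration will yield $M = F_\lambda \in \sC$.

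The main obstacle is executing the successor step cleanly in a general exact category: one must verify that the constructed morphism $F_\xi \to F_{\xi+1}$ is admissible, that the cokernel is a genuine admissible quotient of $A_{i_\xi}$ (so that it lies in $\sC$), and that the eventual colimit $F_\lambda$ really recovers $M$ and not a proper subobject or quotient. This is the technical heart of the arguments in~\cite[Proposition~7.15]{PS5} and~\cite[Lemma~4.6 and proof of Theorem~4.7]{BCE}, and the plan is to adapt them by combining standard pushout--pullback lemmas in exact categories with the hypothesis of exact directed colimits in $\sA$ (which, in particular, makes admissible monomorphisms stable under the transfinite colimits taken at limit ordinals), together with a cofinality bookkeeping argument over the well-ordering of $I$ ensuring that the structural maps $A_{i_\xi} \to M$ are eventually absorbed into the $F_\xi$. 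Relative to~\cite{BCE}, the present formulation dispenses with purity considerations entirely, replacing them by the more flexible exactness-of-directed-colimits hypothesis advertised at the start of the section.
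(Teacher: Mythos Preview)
Your trivial implications $(5)\Rightarrow(4)\Rightarrow(3)\Rightarrow(2)$ and the argument for $(2)\Rightarrow(1)$ are fine. The problem is with your proposed direct proof of $(1)\Rightarrow(5)$.

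You write that at the successor step the cokernel $C_\xi$ is ``an admissible quotient of $A_{i_\xi}$'' and that ``closure of $\sC$ under cokernels of admissible monomorphisms then gives $C_\xi\in\sC$.'' That inference does not hold. The hypothesis that $\sC$ is closed under cokernels of admissible monomorphisms means: if $0\to X\to Y\to Z\to 0$ is admissible exact with \emph{both} $X$ and $Y$ in $\sC$, then $Z\in\sC$. In your situation you have an admissible short exact sequence $0\to K_\xi\to A_{i_\xi}\to C_\xi\to 0$ with $A_{i_\xi}\in\sC$, but no control whatsoever over $K_\xi$ (which, in any concrete realisation of the pushout-type construction you gesture at, will be something like the pullback of $F_\xi\hookrightarrow M\leftarrow A_{i_\xi}$). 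There is no reason for $K_\xi$ to lie in $\sC$, so you cannot conclude $C_\xi\in\sC$, and the whole filtration argument collapses. This is not a technical wrinkle that the references you cite will iron out: in~\cite{BCE} the analogous step uses closure under \emph{pure} quotients, a genuinely stronger hypothesis than what is assumed here, and the point of the present proposition is precisely to avoid purity.

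The paper does not attempt $(1)\Rightarrow(5)$ directly. Instead it climbs the ladder $(2)\Rightarrow(3)\Rightarrow(4)\Rightarrow(5)$. The step $(2)\Rightarrow(3)$ is a smoothification (insert colimits at limit ordinals and use exactness of directed colimits). The crucial step is $(3)\Rightarrow(4)$: given a well-ordered chain $(E_i)_{i<\alpha}$ in $\sC$, one uses the canonical presentation
\[
0\longrightarrow K_\alpha\longrightarrow\bigoplus_{i<\alpha}E_i\longrightarrow E_\alpha\longrightarrow 0,
\]
obtained as a directed colimit of the split sequences $0\to\bigoplus_{i<\gamma}E_i\to\bigoplus_{i\le\gamma}E_i\to E_\gamma\to 0$. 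The key observation is that the transition maps $K_\gamma\to K_\delta$ are \emph{split} monomorphisms, so hypothesis~$(3)$ applies to give $K_\alpha\in\sC$; since coproducts of objects of $\sC$ are also in $\sC$ (by extensions and~$(3)$), the cokernel closure now legitimately yields $E_\alpha\in\sC$. Finally $(4)\Rightarrow(5)$ is a general cofinality fact about directed colimits. You should replace your $(1)\Rightarrow(5)$ plan with this stepwise route; the canonical-presentation trick is exactly what supplies the missing membership of the kernel in~$\sC$.
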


\begin{proof}
 The implications (5)\,$\Longrightarrow$\,(4)\,%
$\Longrightarrow$\,(3)\,$\Longrightarrow$(2) are obvious.
 The equivalence (1)\,$\Longleftrightarrow$\,(2) is easy (see
the proof in~\cite{PS5} for a discussion).

 (2)\,$\Longrightarrow$\,(3) 
 Let $(C_i\to C_j)_{0\le i<j<\alpha}$ be a well-ordered chain of
admissible monomorphisms in $\sA$ with the objects $C_i\in\sC$ for
all $0\le i<\alpha$.
 We have to prove that $\varinjlim_{i<\alpha}C_\alpha\in\sC$.
 If $\alpha$~is a successor ordinal, then there is nothing to prove.
 Otherwise, let us construct a smooth well-ordered chain
$(D_i\to D_j)_{0\le i<j<\alpha}$ in $\sA$ in the following way:
\begin{itemize}
\item if $j<\omega$, then $D_j=C_j$;
\item if $j>\omega$, \,$j=i+1<\alpha$ is a successor ordinal,
then $D_j=C_i$;
\item if $j<\alpha$~is a limit ordinal, then
$D_j=\varinjlim_{i<j}D_i=\varinjlim_{i<j}C_i$.
\end{itemize}
 The transition morphisms $D_i\rarrow D_j$ for $0\le i<j<\alpha$ are
constructed in the obvious way.
 It is clear that $(D_i\to D_j)_{0\le i<j<\alpha}$ is a smooth chain
in $\sA$ and $\varinjlim_{i<\alpha}D_i=\varinjlim_{i<\alpha}C_i$
(as usual, we denote this directed colimit by $D_\alpha=C_\alpha$).

 The morphisms $D_i\rarrow D_j$ are admissible monomorphisms in $\sA$
for all $0\le i<j<\alpha$, because directed colimits of admissible
monomorphisms are admissible monomorphisms in $\sA$ by assumption.
 To show that $D_j\in\sC$ for all $0\le j<\alpha$, one proceeds by
transfinite induction on~$j$.
 The cases of $j=0$ or $j$~a successor ordinal are obvious, while
the case of a limit ordinal~$j$ is covered by the condition~(2)
(as the objects $(D_i\in\sC)_{0\le i<j}$ form a smooth chain of
admissible monomorphisms).
 Finally, the last application of~(2) shows that $C_\alpha=D_\alpha
\in\sC$, as desired.

 (3)\,$\Longrightarrow$\,(4)
 Let $\alpha$~be a limit ordinal and $(E_i\to E_j)_{0\le i<j<\alpha}$ be
a well-ordered chain of morphisms in $\sA$ with the objects $E_i\in\sC$
for all $0\le i<\alpha$.
 For every successor ordinal $\beta=\gamma+1<\alpha$, we have
$\varinjlim_{i<\beta}E_i=E_\gamma$.
 The canonical presentation of this directed colimit is a split short
exact sequence
\begin{equation} \label{split-direct-limit-presentation}
 0\lrarrow K_\gamma=\bigoplus\nolimits_{i<\gamma}E_i\lrarrow
 \bigoplus\nolimits_{i\le\gamma}E_i\overset p\lrarrow E_\gamma\lrarrow0.
\end{equation}
 Here the components of the split epimorphism~$p$ are the transition
morphisms $E_i\rarrow E_\gamma$, \,$0\le i\le\gamma$.
 One can easily see that the morphism~$p$ is naturally isomorphic to
the direct summand projection $\bigoplus\nolimits_{i\le\gamma}E_i
\rarrow E_\gamma$, so the kernel $K_\gamma$ of~$p$ is naturally
identified with $\bigoplus_{i<\gamma}E_i$.

 As the ordinal~$0\le\gamma<\alpha$ varies, the short exact
sequences~\eqref{split-direct-limit-presentation} form a directed
diagram whose directed colimit is the canonical presentation
\begin{equation} \label{nonsplit-direct-limit-presentation}
 0\lrarrow K_\alpha\lrarrow
 \bigoplus\nolimits_{i<\alpha}E_i\lrarrow E_\alpha\lrarrow0
\end{equation}
of the directed colimit $E_\alpha=\varinjlim_{i<\alpha}E_i$.
 The sequence~\eqref{nonsplit-direct-limit-presentation} is exact
in~$\sA$ as the directed colimit of short exact
sequences~\eqref{split-direct-limit-presentation}.
 So we have $K_\alpha=\varinjlim_{0\le\gamma<\alpha}K_\gamma$.

 The class $\sC\subset\sA$ is closed under finite direct sums by
assumption; hence it is clear from the condition~(3) that it is closed
under set-indexed coproducts.
 Therefore, we have $K_\gamma\in\sC$ for all $0\le\gamma<\alpha$.
 Furthermore, the transition morphisms in the directed diagram
$(K_\gamma\to K_\delta)_{0\le\gamma<\delta<\alpha}$
are split monomorphisms (because $K_\gamma$ is a direct summand
in $\bigoplus_{i\le\gamma}E_i$ and the transition morphism
$\bigoplus_{i\le\gamma}E_i\rarrow\bigoplus_{i\le\delta}E_i$ is
the subcoproduct injection, i.~e., the standard split monomorphism).

 Thus the property~(3) can be applied to the effect that
$K_\alpha\in\sC$.
 Finally, we conclude that $E_\alpha\in\sC$, since the class $\sC$ is
closed under the cokernels of admissible monomorphisms in $\sA$
by assumption.

 (4)\,$\Longrightarrow$\,(5) is a general property of directed
colimits; see~\cite[Sections~1.5\+-1.7]{AR}.
\end{proof}

\Section{Cotorsion Periodicity in Category-Theoretic Context}
\label{categorical-cotorsion-periodicity-secn}

 Let $\sK$ be an exact category.
 Following~\cite[Section~7.1]{Pedg}, a full subcategory $\sA\subset\sK$
is said to be \emph{self-resolving} if it is self-generating (as defined
in Section~\ref{prelim-ext-orthogonal-secn}), closed under extensions,
and closed under the kernels of admissible epimorphisms.

 In particular, $\sA\subset\sK$ is said to be
\emph{resolving}~\cite[Section~2]{Sto} if it is generating and closed
under extensions and the kernels of admissible epimorphisms.
 Clearly, any resolving full subcategory is self-resolving.

 The following generalization of~\cite[Theorem~4.7]{BCE} is our
main category-theoretic cotorsion periodicity theorem.

\begin{thm} \label{categorical-periodicity-theorem}
 Let\/ $\sK$ be an exact category and\/ $\sA\subset\sK$ be
a self-resolving subcategory.
 Assume that\/ $\sA$ (with its inherited exact category structure) is
an exact category with exact directed colimits, and the inclusion
functor\/ $\sA\rarrow\sK$ preserves directed colimits.
 Put\/ $\sB=\sA^{\perp_1}=\sA^{\perp_{\ge1}}\subset\sK$ (as per
Lemma~\ref{garcia-rozas}), and let $M\in\sK$ be a\/ $\sB$\+periodic
object.
 Then the class of objects $\sA\cap{}^{\perp_1}M\subset\sK$ is closed
in\/~$\sA$ under
\begin{itemize}
\item extensions,
\item the cokernels of admissible monomorphisms,
\item the kernels of admissible epimorphisms and
\item directed colimits.
\end{itemize}
\end{thm}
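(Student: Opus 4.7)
The plan is to combine Lemma~\ref{periodic-2-out-of-3-lemma} with the Eklof lemma (Lemma~\ref{eklof-lemma}) and Proposition~\ref{transfinite-extensions-direct-limits}. Set $\sC := \sA \cap {}^{\perp_1}M \subset \sA$. Since $\sB = \sA^{\perp_{\ge1}}$ by definition (using Lemma~\ref{garcia-rozas}, which applies because $\sA$ is self-resolving and hence self-generating and closed under kernels of admissible epimorphisms), the Ext-vanishing hypothesis $\Ext^n_\sK(A,B)=0$ for $A\in\sA$, $B\in\sB$, $n\ge1$ required by Lemma~\ref{periodic-2-out-of-3-lemma} holds automatically. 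Moreover $\sA$ is closed under extensions in $\sK$ since it is self-resolving. Applying Lemma~\ref{periodic-2-out-of-3-lemma} then gives immediately that $\sC$ is closed in the exact category $\sA$ under extensions, cokernels of admissible monomorphisms, and kernels of admissible epimorphisms. This dispatches the first two closure properties of the theorem.

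For the closure of $\sC$ under directed colimits, the idea is to apply Proposition~\ref{transfinite-extensions-direct-limits} with $\sA$ playing the role of the ambient exact category with exact directed colimits (which holds by hypothesis) and $\sC$ playing the role of the class of objects. The proposition's running assumption that $\sC$ is closed under cokernels of admissible monomorphisms and extensions in $\sA$ has just been verified. It therefore suffices to establish condition~(1) of the proposition, namely that $\sC$ is closed under transfinitely iterated extensions in $\sA$; the desired condition~(5) will then follow.

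To check~(1), let $(F_i\to F_j)_{0\le i<j<\alpha}$ be a filtration in $\sA$ with $F_0=0$ and with each cokernel $\coker(F_i\to F_{i+1})$ lying in $\sC$. On the one hand, the colimit $F_\alpha$ lies in $\sA$ automatically: the filtration is constructed inside $\sA$ from objects of $\sA$ using admissible monomorphisms of $\sA$, and directed colimits of such data exist and remain in $\sA$. On the other hand, to conclude $F_\alpha \in {}^{\perp_1}M$, I would invoke Lemma~\ref{eklof-lemma} in the ambient exact category $\sK$. The key compatibilities that make this valid are (i) admissible monomorphisms in $\sA$ are admissible in $\sK$ because the exact structure on $\sA$ is inherited from $\sK$, and (ii) the inclusion $\sA\rarrow\sK$ preserves directed colimits by hypothesis. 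Hence the same filtration is a transfinitely iterated extension in $\sK$ with cokernels in ${}^{\perp_1}M$, and Eklof's lemma yields $F_\alpha\in{}^{\perp_1}M$. Combining, $F_\alpha\in\sC$.

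The main subtlety, and what I expect to be the only delicate point, is precisely this compatibility of structures: one must be confident that the transfinitely iterated extension produced in $\sA$ coincides, as a diagram in $\sK$, with one to which Eklof's lemma in $\sK$ can be applied. Once compatibilities~(i)--(ii) above are noted, everything slots into place, and Proposition~\ref{transfinite-extensions-direct-limits} then upgrades condition~(1) to condition~(5), giving closure of $\sC$ under arbitrary directed colimits in $\sA$, as required.
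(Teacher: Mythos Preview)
Your proposal is correct and follows essentially the same route as the paper: apply Lemma~\ref{periodic-2-out-of-3-lemma} for the closure under extensions, kernels, and cokernels; invoke Eklof's lemma in~$\sK$ (using that admissible monomorphisms and directed colimits in~$\sA$ remain such in~$\sK$) to obtain closure under transfinitely iterated extensions; and then feed this into Proposition~\ref{transfinite-extensions-direct-limits} to upgrade to closure under arbitrary directed colimits. Your explicit articulation of the compatibilities~(i)--(ii) is exactly the parenthetical remark the paper makes more tersely.
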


\begin{proof}
 Put $\sC_M=\sA\cap{}^{\perp_1}M$.
 Then, by Lemma~\ref{periodic-2-out-of-3-lemma}, the class of objects
$\sC_M\subset\sA$ is closed under extensions, the cokernels of
admissible monomorphisms, and the kernels of admissible epimorphisms 
in~$\sA$.
 By Lemma~\ref{eklof-lemma}, the class $\sC_M$ is also closed under
transfinitely iterated extensions in~$\sA$ (notice that all directed
colimits, and consequently all transfinitely iterated extensions
in $\sA$ remain such in $\sK$ by assumption).
 Applying Proposition~\ref{transfinite-extensions-direct-limits}, we
conclude that the class $\sC_M$ is closed under directed colimits
in~$\sA$.
\end{proof}

 The following corollary shows how
Theorem~\ref{categorical-periodicity-theorem} can be applied.

\begin{cor} \label{categorical-periodicity-cor}
 Let\/ $\sK$ be an exact category and\/ $\sA\subset\sK$ be
a self-resolving subcategory.
 Assume that\/ $\sA$ (with its inherited exact category structure) is
an exact category with exact directed colimits, and the inclusion
functor\/ $\sA\rarrow\sK$ preserves directed colimits.
 Assume further that the smallest subcategory\/ $\sA'\subset\sA$ 
such that
\begin{itemize} 
\item $\sA'$ contains all the objects of\/ $\sA$ that have finite
projective dimension in\/~$\sK$,
\item $\sA'$ is closed under directed colimits in\/~$\sA$,
\item $\sA'$ is closed under extensions in\/~$\sA$,
\item $\sA'$ is closed under the cokernels of admissible monomorphisms
in\/~$\sA$ and
\item $\sA'$ is closed under the kernels of admissible epimorphisms
in\/~$\sA$
\end{itemize}
coincides with $\sA$ itself.
 Put\/ $\sB=\sA^{\perp_1}=\sA^{\perp_{\ge1}}\subset\sK$.
 Then any\/ $\sB$\+periodic object in\/ $\sK$ belongs to\/~$\sB$.
\end{cor}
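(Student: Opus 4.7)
The strategy is to verify that $\sA \cap {}^{\perp_1}M$ satisfies exactly the closure properties that define the subcategory $\sA' \subset \sA$ in the hypothesis; then by minimality $\sA = \sA' \subseteq \sA \cap {}^{\perp_1}M$, which immediately forces $M \in \sA^{\perp_1} = \sB$.

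I would proceed in two steps. First, apply Lemma~\ref{fin-proj-dim-are-ok} to the pair of classes $\sA$ and $\sB$: the assumption $\Ext^n_\sK(A,B) = 0$ for all $A \in \sA$, $B \in \sB$, $n \ge 1$ holds by the very definition $\sB = \sA^{\perp_{\ge 1}}$, so the lemma yields that every object of $\sA$ having finite projective dimension in $\sK$ lies in $\sA \cap {}^{\perp_1}M$. Thus $\sA \cap {}^{\perp_1}M$ contains the ``generators'' of~$\sA'$.

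Second, apply Theorem~\ref{categorical-periodicity-theorem} verbatim (its hypotheses on $\sA \subset \sK$, on directed colimits in $\sA$, and on $M$ being $\sB$\+periodic are precisely those assumed in the corollary). The conclusion of the theorem is that $\sA \cap {}^{\perp_1}M$ is closed in $\sA$ under cokernels of admissible monomorphisms, kernels of admissible epimorphisms, and directed colimits. Combined with the previous step, this shows that $\sA \cap {}^{\perp_1}M$ is a subcategory of $\sA$ enjoying all four closure properties listed in the definition of $\sA'$. By minimality of $\sA'$ and the hypothesis $\sA' = \sA$, we obtain $\sA \subseteq \sA \cap {}^{\perp_1}M$, hence $\Ext^1_\sK(A,M) = 0$ for every $A \in \sA$, i.e., $M \in \sB$.

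There is no real obstacle to overcome: all the substantive work has already been done in Theorem~\ref{categorical-periodicity-theorem} (which in turn rests on Proposition~\ref{transfinite-extensions-direct-limits} and Lemma~\ref{periodic-2-out-of-3-lemma}) and in Lemma~\ref{fin-proj-dim-are-ok}. The corollary is a direct bookkeeping consequence. The only point requiring a moment's care is the compatibility between the directed colimits computed in $\sA$ and those computed in $\sK$, but this is built into the standing assumption that the inclusion $\sA \rarrow \sK$ preserves directed colimits.
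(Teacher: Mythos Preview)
Your proof is correct and follows essentially the same approach as the paper's own proof: define $\sC_M = \sA \cap {}^{\perp_1}M$, use Lemma~\ref{fin-proj-dim-are-ok} to show it contains the objects of $\sA$ of finite projective dimension in~$\sK$, use Theorem~\ref{categorical-periodicity-theorem} for the three closure properties, and conclude by the minimality hypothesis that $\sC_M = \sA$, hence $M \in \sB$.
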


\begin{proof}
 Let $M\in\sK$ be a $\sB$\+periodic object in $\sK$ and put
$\sC_M=\sA\cap{}^{\perp_1}M$.
 Let $\sP$ denote the intersection of $\sA$ with the full subcategory
of all objects of finite projective dimension in~$\sK$.
 By Lemma~\ref{fin-proj-dim-are-ok}, we have $\sP\subset\sC_M$.
 By Theorem~\ref{categorical-periodicity-theorem}, the class $\sC_M$ is
closed under extensions, directed colimits, the cokernels of admissible
monomorphisms, and the kernels of admissible epimorphisms in~$\sA$.
 By assumption $\sC_M=\sA$, and so $M\in\sA^{\perp_1}=\sB$.
\end{proof}

 In the rest of this section, we discuss a category-theoretic version
of a standard technique~\cite[proof of Proposition~7.6]{CH},
\cite[Propositions~1 and~2]{EFI} allowing to apply periodicity theorems
to the study of the objects of cocycles in acyclic complexes.
 The point is that we consider classes closed under infinite products
\emph{without} assuming the infinite products to be exact in our exact
category.
 This makes the argument more complicated with additional assumptions
required.

\begin{lem} \label{product-of-exact-sequences}
 Let\/ $0\rarrow K_i\rarrow L_i\rarrow M_i\rarrow0$ be a family of
admissible short exact sequences in an exact category\/~$\sK$.
 Assume that the products\/ $\prod_iK_i$, \ $\prod_iL_i$, \
$\prod_i M_i$ exist in\/~$\sK$.
 Assume further that there exists an admissible epimorphism
$q\:A\rarrow\prod_i M_i$, where $A\in\sK$ is an object such that\/
$\Ext^1_\sK(A,K_i)=0$ for all indices~$i$.
 Then
$$
 0\lrarrow \prod\nolimits_iK_i \overset k\lrarrow \prod\nolimits_iL_i
 \overset p\lrarrow \prod\nolimits_i M_i \lrarrow0
$$
is an admissible short exact sequence in\/~$\sK$.
\end{lem}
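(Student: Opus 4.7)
The plan is to first prove that the product map $p\: \prod_i L_i \rarrow \prod_i M_i$ is an admissible epimorphism in~$\sK$, and then to identify its kernel as $\prod_i K_i$ via the universal property of products. The whole subtlety is that infinite products need not preserve admissible epimorphisms in a general exact category; the role of the auxiliary object~$A$ together with the hypothesis $\Ext^1_\sK(A,K_i)=0$ is precisely to supply a factorization that forces $p$ to be admissible despite this.

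The first step is to lift~$q$ through~$p$. For each index~$i$, the vanishing $\Ext^1_\sK(A,K_i)=0$ applied to the admissible short exact sequence $0\rarrow K_i\rarrow L_i\rarrow M_i\rarrow 0$ yields surjectivity of $\Hom_\sK(A,L_i)\rarrow\Hom_\sK(A,M_i)$. Using the natural isomorphism $\Hom_\sK(A,\prod_iL_i)\simeq\prod_i\Hom_\sK(A,L_i)$, valid in any category with the relevant products, taking products over~$i$ yields surjectivity of $\Hom_\sK(A,\prod_iL_i)\rarrow\Hom_\sK(A,\prod_iM_i)$. Consequently, the given morphism~$q$ admits a lift $\tilde q\: A\rarrow\prod_iL_i$ with $p\tilde q=q$.

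Since $q=p\tilde q$ is an admissible epimorphism by hypothesis, the standard exact-category fact that the second factor of a composition which is an admissible epimorphism is itself an admissible epimorphism (see, e.g., \cite[Proposition~2.16]{Bueh}) forces $p$ to be an admissible epimorphism in~$\sK$. Therefore $p$ admits an admissible kernel in~$\sK$. A routine diagram chase using the universal property of products identifies this kernel with $\prod_iK_i$ equipped with the canonical map~$k$: any morphism $f\: X\rarrow\prod_iL_i$ with $pf=0$ has components $f_i$ satisfying $p_if_i=0$, so each $f_i$ factors uniquely through $K_i\rarrow L_i$, yielding a unique factorization $X\rarrow\prod_iK_i$. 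This gives the desired admissible short exact sequence.

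The only point that requires care is the appeal to the axiom that the second factor of a composition which is an admissible epimorphism is itself admissible; once the lift~$\tilde q$ is produced, everything else in the argument is purely formal, so I do not anticipate any serious obstacle.
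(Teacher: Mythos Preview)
Your proof is correct and follows essentially the same approach as the paper's: lift $q$ through $p$ componentwise using the $\Ext^1$ vanishing, then invoke the obscure axiom (dual of \cite[Proposition~2.16]{Bueh}) to conclude that $p$ is an admissible epimorphism, and finally identify its kernel with $\prod_i K_i$ via the universal property of products. The only cosmetic difference is the order of exposition---the paper first observes that $k=\ker(p)$ (which is needed for the dual obscure axiom to apply, since that statement assumes the morphism has a kernel) and then proves $p$ is an admissible epimorphism, whereas you do these in the reverse order; you may want to note explicitly that $p$ has a kernel before invoking \cite[Proposition~2.16]{Bueh}.
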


\begin{proof}
 It is clear that $k=\ker(p)$, as products commute with kernels in
any category.
 Therefore, it suffices to prove that $p$~is an admissible epimorphism
in~$\sK$.
 For this purpose, we show that the admissible epimorphism~$q$
factorizes through~$p$; then the ``obscure axiom''
(the dual version of~\cite[Proposition~2.16]{Bueh}) applies.

 Indeed, in order to show that the morphism $q\:A\rarrow\prod_iM_i$
factorizes through the morphism $p\:\prod_iL_i\rarrow\prod_i M_i$,
it suffices to check that the composition $A\rarrow\prod_iM_i
\rarrow M_j$ factorizes through the admissible epimorphism
$L_j\rarrow M_j$ for every index~$j$.
 The latter follows from the assumption that $\Ext^1_\sK(A,K_j)=0$.
\end{proof}

 The following proposition is our category-theoretic
version of~\cite[Proposition~2]{EFI} not assuming exactness of
countable products.

\begin{prop} \label{periodicity-and-cocycles}
 Let\/ $\sK$ be an idempotent-complete exact category with (possibly
nonexact) countable products, and let\/ $\sB\subset\sK$ be a full
subcategory closed under direct summands and countable products.
 Assume that every object of\/ $\sK$ is an admissible quotient of
an object of finite projective dimension in\/ $\sK$ belonging
to\/~${}^{\perp_{\ge1}}\sB$.
 Then the following conditions are equivalent:
\begin{enumerate}
\item all\/ $\sB$\+periodic objects of\/ $\sK$ belong to\/~$\sB$;
\item in any (unbounded) acyclic complex in\/ $\sK$ with the terms
in\/ $\sB$, the objects of cocycles belong to\/~$\sB$.
\end{enumerate}
\end{prop}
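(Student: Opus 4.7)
The plan is to treat the two implications separately, with (2)\,$\Rightarrow$\,(1) being essentially formal and (1)\,$\Rightarrow$\,(2) being the content.

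For (2)\,$\Rightarrow$\,(1), I would start with a short exact sequence $0\to M\to B\to M\to 0$ exhibiting $M$ as $\sB$\+periodic, and splice infinitely many copies of it together to form an unbounded acyclic complex $\cdots\to B\to B\to B\to\cdots$ whose every term lies in $\sB$ and whose every object of cocycles is (naturally isomorphic to)~$M$. Then (2) forces $M\in\sB$ directly.

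For (1)\,$\Rightarrow$\,(2), let $B^\bu$ be an unbounded acyclic complex in $\sK$ with $B^n\in\sB$ for all $n\in\boZ$, and let $Z^n=\ker(B^n\to B^{n+1})$ denote its cocycles; by acyclicity we have admissible short exact sequences $0\to Z^n\to B^n\to Z^{n+1}\to 0$ in $\sK$. The key move is to form the object $M=\prod_{n\in\boZ}Z^n\in\sK$, reindex so that $\prod_n Z^{n+1}=M$, and argue that the product of these short exact sequences
$$
 0\lrarrow M\lrarrow\prod\nolimits_n B^n\lrarrow M\lrarrow 0
$$
is admissible exact in $\sK$. Since $\sB$ is closed under countable products, the middle term lies in $\sB$, so this would exhibit $M$ as $\sB$\+periodic; then (1) gives $M\in\sB$, and each $Z^k$ is a direct summand of $M$ (via the standard embedding/projection between $Z^k$ and the $k$-th factor of the product in the idempotent-complete category $\sK$), so by closure of $\sB$ under direct summands each $Z^k\in\sB$.

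The main obstacle is that products need not be exact in $\sK$, so I cannot take the product of the short exact sequences termwise for free. To overcome this, I invoke Lemma~\ref{product-of-exact-sequences}: I need to produce an admissible epimorphism $q\:A\to M=\prod_n Z^{n+1}$ with $A$ satisfying $\Ext^1_\sK(A,Z^n)=0$ for every~$n$. The hypothesis of the proposition furnishes an admissible epimorphism onto $M$ from some object $A\in{}^{\perp_{\ge 1}}\sB$ of finite projective dimension $d$ in~$\sK$. The required $\Ext^1$-vanishing then follows by dimension shifting along the tail $\cdots\to B^{n-d-1}\to\cdots\to B^{n-1}\to Z^n\to 0$ of the acyclic complex: since $A\in{}^{\perp_{\ge 1}}\sB$ kills $\Ext^{\ge 1}_\sK(A,B^m)$ for every $m$, one obtains $\Ext^1_\sK(A,Z^n)\simeq\Ext^{d+1}_\sK(A,Z^{n-d})=0$ because $\pd_\sK A\le d$. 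With this in hand, Lemma~\ref{product-of-exact-sequences} applies and the argument closes as above.
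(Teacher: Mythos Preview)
Your proof is correct and follows essentially the same approach as the paper: the easy direction splices the periodic short exact sequence into an unbounded acyclic complex, and the nontrivial direction takes the countable product of the cocycle short exact sequences, uses Lemma~\ref{product-of-exact-sequences} together with the supplied admissible epimorphism from an object $A\in{}^{\perp_{\ge1}}\sB$ of finite projective dimension, and verifies $\Ext^1_\sK(A,Z^n)=0$ by dimension shifting along the acyclic complex. The paper's argument is the same in all essentials.
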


\begin{proof}
 (2)\,$\Longrightarrow$\,(1) Given a $\sB$\+periodic object $M\in\sK$,
produce an unbounded acyclic complex in $\sK$ with the terms in $\sB$
by splicing copies of the short exact sequence $0\rarrow M\rarrow B
\rarrow M\rarrow0$, \ $B\in\sB$ infinitely in both directions.
Then $M\in\sB$ by condition~(2).

 (1)\,$\Longrightarrow$\,(2) This is the nontrivial implication
requiring our additional assumption.
 Basically, the argument consists in chopping up a given acyclic
complex into admissible short exact sequences and taking the product of
the resulting pieces.

 Indeed, by definition, any acyclic complex $B^\bu$ in\/ $\sK$ is
produced by splicing admissible short exact sequences $0\rarrow M^i
\rarrow B^i\rarrow M^{i+1} \rarrow0$, \ $i\in\boZ$.
 In the situation at hand, we have $B^i\in\sB$ and $M^i\in\sK$, and
need to show that $M^i\in\sB$.
 Taking the product of these short exact sequences over $i\in\boZ$,
we obtain a sequence
\begin{equation} \label{product-sequence}
 0\lrarrow\prod\nolimits_{i\in\boZ} M^i \lrarrow
 \prod\nolimits_{i\in\boZ} B^i \lrarrow
 \prod\nolimits_{i\in\boZ} M^i \lrarrow0.
\end{equation}

 In order to show that~\eqref{product-sequence} is admissible exact,
we apply Lemma~\ref{product-of-exact-sequences}.
 By assumption, there exists an object $A\in\sK$ of finite projective
dimension, belonging to ${}^{\perp_{\ge1}}\sB\subset\sK$, and
an admissible epimorphism $A\rarrow\prod_{i\in\boZ}M^i$ in~$\sK$.
 It remains to check that $\Ext^1_\sK(A,M^i)=0$ for all $i\in\boZ$.
 This is a version of Lemma~\ref{fin-proj-dim-are-ok}, provable by
a similar dimension shifting: one constructs isomorphisms
$\Ext^1_\sK(A,M^i)\simeq\Ext^2_\sK(A,M^{i-1})\simeq
\Ext^3_\sK(A,M^{i-2})\simeq\dotsb$ and uses the finiteness of
projective dimension of~$A$.

 Now we have $\prod_{i\in\boZ}B^i\in\sB$, since $\sB$ is closed under
countable products in~$\sK$.
 By~(1), we can conclude that $\prod_{i\in\boZ}M^i\in\sB$.
 Since $\sB$ is closed under direct summands in $\sK$, it follows
that $M^i\in\sB$ for every~$i$.
\end{proof}

\Section{Cotorsion Periodicity for Quasi-Coherent Sheaves}
\label{qcoh-cotorsion-periodicity-secn}

 Having Corollaries~\ref{direct-limits-of-flat-qcoh-of-finite-projdim}
and~\ref{categorical-periodicity-cor} at our disposal, the proof
of cotorsion periodicity for quasi-coherent sheaves on quasi-compact
semi-separated schemes is now straightforward.
 Let us spell it out.

 Let $X$ be a quasi-compact semi-separated scheme.
 A quasi-coherent sheaf $\C$ on $X$ is called \emph{cotorsion} if
$\Ext^1_X(\F,\C)=0$ for all flat quasi-coherent sheaves on~$X$.
 We denote the class of all flat quasi-coherent sheaves by
$X\qcoh_\fl\subset X\qcoh$ and the class of all cotorsion quasi-coherent
sheaves by $X\qcoh^\cot\subset X\qcoh$.

\begin{lem} \label{flats-hereditary-lemma}
 Let $X$ be a quasi-compact semi-separated scheme.
 Then the class of all flat quasi-coherent sheaves $X\qcoh_\fl$
is resolving and closed under directed colimits in $X\qcoh$.
 Consequently, one has $\Ext^n_X(\F,\C)=0$ for all flat quasi-coherent
sheaves $\F$, all cotorsion quasi-coherent sheaves $\C$, and all
integers $n\ge1$.
\end{lem}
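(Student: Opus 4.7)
The plan is to check each property locally on an affine open cover, reducing to well-known facts about flat modules, and then deduce the higher $\Ext$ vanishing from Lemma~\ref{garcia-rozas}. Recall that a quasi-coherent sheaf $\F$ on $X$ is flat if and only if $\F(U)$ is a flat $\O(U)$\+module for every affine open $U\subset X$; this reduces the relevant closure properties of $X\qcoh_\fl$ to the corresponding facts about flat modules over commutative rings.

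More concretely, I would verify the three closure conditions as follows. Closure under extensions: in any short exact sequence $0\rarrow\F'\rarrow\F\rarrow\F''\rarrow0$ in $X\qcoh$, exactness and flatness can both be tested on affine opens, and an extension of flat $\O(U)$\+modules is flat. Closure under kernels of admissible epimorphisms: if $\F$ and $\F''$ are flat and the sequence above is exact, then on each affine open $U$ the standard long exact sequence of $\Tor$ gives $\Tor_1^{\O(U)}(\F'(U),{-})=0$, so $\F'(U)$ is flat. Closure under directed colimits: directed colimits of quasi-coherent sheaves are computed section-wise on affine opens, and a directed colimit of flat modules is flat.

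For the generating property, I would simply invoke the cited results of Murfet~\cite[Section~2.4]{M-n} and Enochs--Estrada~\cite[Lemma~A.1]{EP}, already recalled in Remark~\ref{enough-countably-flats-remark}, which assert that every quasi-coherent sheaf on a quasi-compact semi-separated scheme admits a surjection from a flat quasi-coherent sheaf. This is the only place where semi-separatedness is actually used, the other verifications being local and hence valid on any scheme.

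Finally, for the higher $\Ext$ vanishing, I would appeal to Lemma~\ref{garcia-rozas}: since $X\qcoh_\fl$ is resolving in $X\qcoh$, it is in particular self-generating and closed under the kernels of admissible epimorphisms, so the lemma yields $(X\qcoh_\fl)^{\perp_{\ge1}}=(X\qcoh_\fl)^{\perp_1}$. By definition the right-hand side is the class of cotorsion quasi-coherent sheaves, so $\Ext^n_X(\F,\C)=0$ for every flat $\F$, every cotorsion $\C$, and every $n\ge1$. There is no real obstacle: the statement is an assembly of standard facts together with one external input (enough flats on quasi-compact semi-separated schemes) and one previously established lemma.
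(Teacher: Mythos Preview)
Your proof is correct and follows essentially the same approach as the paper's: both verify the closure properties of $X\qcoh_\fl$ by reducing locally to the corresponding facts about flat modules over rings, cite \cite{M-n} and \cite{EP} for the generating property, and then invoke Lemma~\ref{garcia-rozas} for the higher $\Ext$ vanishing. One minor slip: the reference \cite{EP} is Efimov--Positselski, not Enochs--Estrada.
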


\begin{proof}
 In fact, the pair of classes $(X\qcoh_\fl$, $X\qcoh^\cot)$ is
a hereditary complete cotorsion pair in $X\qcoh$; but we do not even
need the definitions of these terms.
 The class $X\qcoh_\fl$ is generating in $X\qcoh$
by~\cite[Section~2.4]{M-n} or~\cite[Lemma~A.1]{EP}.
 The fact that this class is closed under extensions, kernels of
epimorphisms, and directed colimits follows from the local nature of
the definition of a flat quasi-coherent sheaf and the similar
properties of the class of flat modules over a ring.
 This proves the first assertion of the lemma; the second one is
then provided by Lemma~\ref{garcia-rozas}.
\end{proof}

 The following cotorsion periodicity theorem is the quasi-coherent sheaf
version of the cotorsion periodicity theorem for modules over
associative rings~\cite[Theorem~1.2(2), Proposition~4.8(2), or
Theorem~5.1(2)]{BCE}.

\begin{thm} \label{qcoh-cotorsion-periodicity-theorem}
 Let $X$ be a quasi-compact semi-separated scheme.
 Then any cotorsion-periodic quasi-coherent sheaf on $X$ is cotorsion.
\end{thm}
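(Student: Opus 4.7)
The plan is to apply Corollary~\ref{categorical-periodicity-cor} with $\sK = X\qcoh$ and $\sA = X\qcoh_\fl$. Then $\sB := \sA^{\perp_1} = X\qcoh^\cot$ is exactly the class of cotorsion quasi-coherent sheaves, and a cotorsion-periodic sheaf is by definition a $\sB$-periodic object of $\sK$; so the conclusion of the corollary is precisely the theorem.

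First I would verify the categorical hypotheses. By Lemma~\ref{flats-hereditary-lemma}, the class $\sA = X\qcoh_\fl$ is resolving in $X\qcoh$ (in particular self-resolving) and closed under directed colimits, and the identity $\sA^{\perp_1} = \sA^{\perp_{\ge 1}}$ holds. Since $X\qcoh$ is a Grothendieck abelian category, directed colimits are exact there; combined with the closure of $\sA$ under extensions, kernels of admissible epimorphisms, and directed colimits in $X\qcoh$, this shows that $\sA$ with its inherited exact structure has exact directed colimits, and that the inclusion $\sA \hookrightarrow \sK$ preserves directed colimits. The standing assumptions of Corollary~\ref{categorical-periodicity-cor} are thus in place.

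The crucial step is to verify the generation condition: that the smallest full subcategory $\sA' \subset \sA$ containing all flat quasi-coherent sheaves of finite projective dimension in $X\qcoh$ and closed in $\sA$ under directed colimits, cokernels of admissible monomorphisms, and kernels of admissible epimorphisms, coincides with $\sA$. This is exactly where Corollary~\ref{direct-limits-of-flat-qcoh-of-finite-projdim} is used: for a quasi-compact semi-separated scheme $X$ with a finite affine open covering of size $d$, every flat quasi-coherent sheaf on $X$ is an $\aleph_1$-directed colimit of flat quasi-coherent sheaves of projective dimension $\le d$ in $X\qcoh$. Consequently the closure of the finite-projective-dimension flats under directed colimits alone already exhausts $\sA$, and so $\sA' = \sA$.

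With these hypotheses in place, Corollary~\ref{categorical-periodicity-cor} immediately yields the theorem. I do not expect a substantive obstacle in this final argument; the real work is upstream, in the countable Govorov--Lazard theorem of Section~\ref{flat-sheaves-on-qcomp-qsep-schemes-secn} combined with the local-to-global result on finite projective dimension in Section~\ref{qcoh-finite-projdim-secn} (which together yield Corollary~\ref{direct-limits-of-flat-qcoh-of-finite-projdim}), and in the abstract cotorsion-periodicity machinery of Section~\ref{categorical-cotorsion-periodicity-secn}. The present proof is essentially the synthesis of these two threads.
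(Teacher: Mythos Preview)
Your proposal is correct and follows essentially the same approach as the paper: apply Corollary~\ref{categorical-periodicity-cor} with $\sK=X\qcoh$ and $\sA=X\qcoh_\fl$, using Lemma~\ref{flats-hereditary-lemma} for the self-resolving and exact-directed-colimit hypotheses and Corollary~\ref{direct-limits-of-flat-qcoh-of-finite-projdim} for the generation condition. You even correctly observe that the directed-colimit closure alone suffices to get $\sA'=\sA$, which the paper also notes in the remark following the theorem.
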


\begin{proof}
 Directed colimits are exact in $\sK=X\qcoh$, so it follows from
Lemma~\ref{flats-hereditary-lemma} that they are also exact in
the exact category $\sA=X\qcoh_\fl$ of flat quasi-coherent sheaves.
 The assertion of the theorem is now obtained by applying
Corollary~\ref{categorical-periodicity-cor}, whose assumptions are
satisfied by
Corollary~\ref{direct-limits-of-flat-qcoh-of-finite-projdim}
and Lemma~\ref{flats-hereditary-lemma}.
\end{proof}

\begin{rem}
 Notice that the proof of
Theorem~\ref{qcoh-cotorsion-periodicity-theorem} above does not use
the extension, kernel, and cokernel clauses of
Corollary~\ref{categorical-periodicity-cor}, but only the directed
colimit clause.
 An alternative approach to proving the quasi-coherent cotorsion
periodicity, using the kernel closure and the \v Cech coresolution,
was previously suggested in the paper~\cite[Theorem~3.3]{CET}.
 Let us point out that~\cite[first paragraph of the proof of
Lemma~3.2]{CET} is erroneous, but the assertion of~\cite[Lemma~3.2]{CET}
is valid despite of the mistake, which was subsequently corrected
in~\cite[Theorem~6.3(2)]{EGilO}; see also~\cite[Remark~6.7]{EGilO}.

 The argument in~\cite{CET} works directly with acyclic complexes
with cotorsion components, but one can adapt the idea to our setting
as follows.
 The \v Cech coresolution~\eqref{cech-coresolution-eqn} from
Lemma~\ref{cech-coresolution} shows that any flat quasi-coherent
sheaf on $X$ can be obtained from the direct images of flat
quasi-coherent sheaves with respect to open immersions of affine
open subschemes $j\:U\rarrow X$ using the operations of finite
direct sum and kernel of admissible epimorphism in $X\qcoh_\fl$.
 One observes that, for any flat quasi-coherent sheaf $\G$ on $U$,
the direct image $j_*\G$ is easily constructed as a directed colimit
of flat quasi-coherent sheaves of finite projective dimension on~$X$
(using the Govorov--Lazard theorem for $\O(U)$\+modules and
Theorem~\ref{qcoh-projdim-theorem}).
 So our Corollary~\ref{categorical-periodicity-cor}, with its directed
colimit and kernel clauses, is applicable.
\end{rem}

\begin{cor} \label{qcoh-cotorsion-periodicity-stated-for-complexes}
 Let $X$ be a quasi-compact semi-separated scheme and $\B^\bu$
be an acyclic complex in $X\qcoh$ whose terms $\B^n$ are cotorsion
quasi-coherent sheaves.
 Then the sheaves of cocycles of the complex $\B^\bu$ are also
cotorsion.
\end{cor}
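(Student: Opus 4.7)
The plan is to invoke Proposition~\ref{periodicity-and-cocycles} applied to the exact category $\sK = X\qcoh$ and the full subcategory $\sB = X\qcoh^\cot$ of cotorsion quasi-coherent sheaves. Once its hypotheses are verified, condition~(1) of that proposition is exactly Theorem~\ref{qcoh-cotorsion-periodicity-theorem}, and the resulting condition~(2) is exactly the statement of the corollary.

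First I would verify the structural assumptions. The abelian category $X\qcoh$ is idempotent-complete and has all set-indexed products. The class $X\qcoh^\cot$ is trivially closed under direct summands, and closed under arbitrary products because $\Ext^1_X(\F,-)$ converts products into products, so $\Ext^1_X(\F,\prod_i\C_i)=\prod_i\Ext^1_X(\F,\C_i)=0$ for any flat $\F$ and any family of cotorsion sheaves $\C_i$.

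The substantive input is the covering hypothesis of Proposition~\ref{periodicity-and-cocycles}: every object of $X\qcoh$ must admit an epimorphism from an object of finite projective dimension lying in ${}^{\perp_{\ge 1}}X\qcoh^\cot$. Fix a finite affine open covering $X=\bigcup_{\alpha=1}^d U_\alpha$. Given $\M\in X\qcoh$, Remark~\ref{enough-countably-flats-remark} furnishes an epimorphism $A:=\bigoplus_{\xi\in\Xi}\F_\xi\twoheadrightarrow\M$ in which each $\F_\xi$ is a locally countably presentable flat quasi-coherent sheaf on~$X$. By Corollary~\ref{direct-limits-of-flat-qcoh-of-finite-projdim}, $\pd_X\F_\xi\le d$ for every~$\xi$, and since $\Ext^n_X(\bigoplus_\xi\F_\xi,-)\simeq\prod_\xi\Ext^n_X(\F_\xi,-)$ the same bound gives $\pd_X A\le d$. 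The sheaf $A$ is flat, so Lemma~\ref{flats-hereditary-lemma} places $A$ in ${}^{\perp_{\ge 1}}X\qcoh^\cot$, as required.

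With all hypotheses of Proposition~\ref{periodicity-and-cocycles} in place and condition~(1) supplied by Theorem~\ref{qcoh-cotorsion-periodicity-theorem}, condition~(2) follows and gives precisely the assertion of the corollary. The main obstacle in this argument is the covering hypothesis: one must know that the natural flat cover produced by Remark~\ref{enough-countably-flats-remark} has \emph{global} finite projective dimension in $X\qcoh$. This is not a local fact — it fails for general open immersions, as the remark following Theorem~\ref{qcoh-projdim-theorem} illustrates — and relies on the interaction between Theorem~\ref{qcoh-projdim-theorem} and the directed-colimit presentation from Theorem~\ref{flat-qcoh-on-qcomp-qsep}, both of which require semi-separatedness of~$X$.
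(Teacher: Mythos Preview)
Your proof is correct and follows essentially the same route as the paper: apply Proposition~\ref{periodicity-and-cocycles} with $\sK=X\qcoh$ and $\sB=X\qcoh^\cot$, verify the covering hypothesis via Remark~\ref{enough-countably-flats-remark} together with Corollary~\ref{direct-limits-of-flat-qcoh-of-finite-projdim} (the paper phrases this step through Theorem~\ref{qcoh-projdim-theorem}), and supply condition~(1) from Theorem~\ref{qcoh-cotorsion-periodicity-theorem}. One caution on wording: the claim that $\Ext^1_X(\F,{-})$ ``converts products into products'' asserts an isomorphism that is not obvious in $X\qcoh$ (products there need not be exact); what you actually need---and what holds in any abelian category with products, by extending each projection $\prod_i\C_i\to\C_j$ over a given extension---is that a right $\Ext^1$-orthogonal class is closed under products, a fact the paper handles by citing~\cite{CoFu} and~\cite{CoSt}.
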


\begin{proof}
 We apply Proposition~\ref{periodicity-and-cocycles} for
$\sK=X\qcoh$ and $\sB=X\qcoh^\cot$.
 By Theorem~\ref{qcoh-cotorsion-periodicity-theorem}, condition~(1)
of the proposition is satisfied; we want to deduce condition~(2).
 It remains to check the assumptions of the proposition.

 For any scheme $X$, the category $X\qcoh$ is Grothendieck, so it has
infinite products.
 The full subcategory of cotorsion sheaves $X\qcoh^\cot$ is closed under
infinite products in $X\qcoh$ by~\cite[Corollary A.2]{CoSt}
or the dual version of~\cite[Corollary 8.3]{CoFu}; it is also obviously
closed under direct summands.
 Finally, any quasi-coherent sheaf on $X$ is a quotient sheaf of
a flat quasi-coherent sheaf locally of projective dimension~$\le1$ by
Remark~\ref{enough-countably-flats-remark}.
 Such quasi-coherent sheaves have finite projective dimension in
$X\qcoh$ by Theorem~\ref{qcoh-projdim-theorem}.
 One has $X\qcoh_\fl\subset{}^{\perp_{\ge1}}(X\qcoh^\cot)$
by Lemma~\ref{flats-hereditary-lemma}.
\end{proof}

 To end, let us formulate our intended application of
Theorem~\ref{qcoh-cotorsion-periodicity-theorem}, viz.,
a description of the derived category of quasi-coherent sheaves
$\sD(X\qcoh)$ in terms of cotorsion quasi-coherent sheaves.

 Given an additive category $\sE$, we denote by $\Hot(\sE)$
the homotopy category of (unbounded) complexes in~$\sE$.
 Given an exact category $\sK$, we denote by $\sD(\sK)$ the (unbounded)
derived category of~$\sK$.
 So $\sD(\sK)$ is the triangulated Verdier quotient category
$\sD(\sK)=\Hot(\sK)/\Ac(\sK)$, where $\Ac(\sK)\subset\Hot(\sK)$ is
the triangulated subcategory of acyclic complexes.

\begin{lem} \label{subcategory-localization-lemma}
 Let\/ $\sK$ be an idempotent-complete exact category and\/
$\sB\subset\sK$ be a full additive subcategory.
 Assume that for any complex $K^\bu$ in\/ $\sK$ there exists a complex
$B^\bu$ in\/ $\sB$ together with a morphism of complexes $K^\bu\rarrow
B^\bu$ which is a quasi-isomorphism of complexes in\/~$\sK$.
 Then the inclusion of additive categories $\sB\rarrow\sK$ induces
a triangulated equivalence of Verdier quotient categories
$$
 \frac{\Hot(\sB)}{\Hot(\sB)\cap\Ac(\sK)}\overset\simeq\lrarrow
 \frac{\Hot(\sK)}{\Ac(\sK)}=\sD(\sK).
$$
\end{lem}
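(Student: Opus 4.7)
The plan is to prove that the induced triangulated functor $\Phi\:\Hot(\sB)/(\Hot(\sB)\cap\Ac(\sK))\rarrow\sD(\sK)$, arising from the inclusion $\Hot(\sB)\hookrightarrow\Hot(\sK)$, is an equivalence. Note first that $\Hot(\sB)\cap\Ac(\sK)$ is a thick triangulated subcategory of $\Hot(\sB)$: it is closed under shifts and mapping cones (which are computed identically in $\Hot(\sB)$ and in $\Hot(\sK)$) and under direct summands, since the idempotent-completeness of $\sK$ passes to $\Hot(\sK)$ and thence to its full subcategory $\Hot(\sB)$. Hence the source of $\Phi$ is a well-defined triangulated category and $\Phi$ is triangulated. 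Essential surjectivity is immediate: for any $K^\bu\in\Hot(\sK)$, the hypothesis supplies a quasi-isomorphism $K^\bu\rarrow B^\bu$ with $B^\bu\in\Hot(\sB)$, which becomes an isomorphism in $\sD(\sK)$.

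For full faithfulness, I would use the calculus of \emph{left} fractions in both Verdier quotients; the direction of the hypothesis (quasi-isomorphisms \emph{out of} complexes of $\sK$ into $\Hot(\sB)$) is exactly what matches this formalism. For fullness, any morphism $B_1^\bu\rarrow B_2^\bu$ in $\sD(\sK)$ is represented by a left roof $B_1^\bu\overset{f}{\rarrow}L^\bu\overset{s}{\llarrow}B_2^\bu$ with $s$ a quasi-isomorphism. Applying the hypothesis to $L^\bu$ furnishes a quasi-isomorphism $t\:L^\bu\rarrow B^\bu$ with $B^\bu\in\Hot(\sB)$, and the composed roof $B_1^\bu\overset{tf}{\rarrow}B^\bu\overset{ts}{\llarrow}B_2^\bu$ lies entirely in $\Hot(\sB)$; here $ts$ is a composition of two quasi-isomorphisms, so its cone lies in $\Hot(\sB)\cap\Ac(\sK)$. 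This produces the required preimage morphism in the source of $\Phi$.

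For faithfulness, suppose a left roof $B_1^\bu\overset{f}{\rarrow}B^\bu\overset{s}{\llarrow}B_2^\bu$ in $\Hot(\sB)$ vanishes under $\Phi$. Since $s$ is invertible in $\sD(\sK)$, this means $f=0$ in $\sD(\sK)$, and by the calculus of left fractions there exists a quasi-isomorphism $u\:B^\bu\rarrow L^\bu$ in $\Hot(\sK)$ with $uf=0$ in $\Hot(\sK)$. Applying the hypothesis to $L^\bu$ yields a quasi-isomorphism $v\:L^\bu\rarrow B'^\bu$ with $B'^\bu\in\Hot(\sB)$; then $vu\:B^\bu\rarrow B'^\bu$ is a quasi-isomorphism in $\Hot(\sB)$ satisfying $(vu)f=0$ in $\Hot(\sK)$. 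Because the inclusion $\sB\subset\sK$ is full, any null-homotopy in $\sK$ between morphisms of complexes in $\sB$ automatically lies in $\sB$, so $(vu)f=0$ already in $\Hot(\sB)$, and this witnesses the vanishing of the roof in the source of $\Phi$. The main obstacle is aligning the direction of the hypothesis with the calculus of fractions: since the quasi-isomorphisms provided go from $K^\bu$ into $\Hot(\sB)$, left fractions are the only appropriate formalism, and with this choice all replacements needed in the arguments stay inside $\Hot(\sB)$.
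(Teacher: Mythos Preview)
Your proof is correct. One minor wrinkle: the claim that idempotent-completeness of $\sK$ ``passes to $\Hot(\sK)$ and thence to its full subcategory $\Hot(\sB)$'' is not literally true---homotopy categories need not be idempotent-complete even when the underlying additive category is (and full subcategories do not inherit idempotent-completeness in any case). What one would actually use here is that $\Ac(\sK)$ is closed under direct summands in $\Hot(\sK)$ when $\sK$ is idempotent-complete, so that its preimage under the fully faithful inclusion $\Hot(\sB)\hookrightarrow\Hot(\sK)$ is thick. But thickness is not needed to form the Verdier quotient or to run your left-fractions argument, so this slip does not affect the validity of the proof.

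The paper itself gives no argument: it simply records that the lemma is a particular case of general localization results in Kashiwara--Schapira's \emph{Categories and Sheaves} and in Positselski's memoir on Koszul duality. Your explicit calculus-of-left-fractions verification---matching the direction of the given quasi-isomorphisms $K^\bu\to B^\bu$ with left roofs---is precisely the content underlying those cited results, so you have unpacked what the paper outsources. The trade-off is self-containment on your side versus economy and a pointer to the ambient literature on the paper's side.
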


\begin{proof}
 This is a particular case of \cite[Corollary~7.2.2 or
Proposition~10.2.7(i)]{KS} or~\cite[Lemma~1.6(b)]{Pkoszul}.
\end{proof}

\begin{prop} \label{grothendieck-category-localization}
 Let\/ $\sK$ be a Grothendieck abelian category and\/ $\sB\subset\sK$
be a full additive subcategory containing all the injective objects
of\/~$\sK$.
 Then the inclusion of additive categories $\sB\rarrow\sK$ induces
a triangulated equivalence of Verdier quotient categories
$$
 \frac{\Hot(\sB)}{\Hot(\sB)\cap\Ac(\sK)}
 \overset\simeq\lrarrow\sD(\sK).
$$
\end{prop}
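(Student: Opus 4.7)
The plan is to reduce this to Lemma~\ref{subcategory-localization-lemma}. That lemma requires (i) $\sK$ idempotent-complete, and (ii) that every complex $K^\bu$ in $\sK$ admits a quasi-isomorphism $K^\bu \rarrow B^\bu$ to a complex $B^\bu$ with terms in $\sB$. Condition~(i) is automatic since any abelian category (in particular any Grothendieck category) is idempotent-complete.

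For~(ii), I would invoke the standard existence of injective resolutions for unbounded complexes in Grothendieck abelian categories. Concretely, the result of Spaltenstein~\cite{Spal} in the module case, extended to arbitrary Grothendieck abelian categories by Alonso Tarr\'\i o--Jerem\'\i as L\'opez--Souto Salorio (and independently by Serp\'e and Franke), states that for any complex $K^\bu$ in a Grothendieck abelian category $\sK$ there is a quasi-isomorphism $K^\bu \rarrow I^\bu$, where $I^\bu$ is a complex of injective objects of~$\sK$ (in fact one can even arrange $I^\bu$ to be K-injective, but that is not needed here). Since by hypothesis $\sB$ contains every injective object of $\sK$, such a complex $I^\bu$ is a complex in~$\sB$, which verifies hypothesis~(ii) with $B^\bu = I^\bu$.

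Having checked the hypotheses, Lemma~\ref{subcategory-localization-lemma} yields the triangulated equivalence
\[
 \frac{\Hot(\sB)}{\Hot(\sB)\cap\Ac(\sK)}\overset\simeq\lrarrow
 \frac{\Hot(\sK)}{\Ac(\sK)} = \sD(\sK),
\]
induced by the inclusion $\sB\rarrow\sK$, as claimed.

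The only nontrivial ingredient is the existence of quasi-isomorphic injective resolutions for unbounded complexes in Grothendieck categories, which is however a well-established black box in the literature; no original argument is required. Thus there is no serious obstacle: the proof is essentially a one-line application of Lemma~\ref{subcategory-localization-lemma} together with an appeal to Spaltenstein-type resolutions.
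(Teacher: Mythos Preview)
Your proof is correct and follows essentially the same approach as the paper: both reduce to Lemma~\ref{subcategory-localization-lemma} by invoking the existence of (homotopy) injective resolutions for unbounded complexes in Grothendieck categories. The paper additionally remarks that one can alternatively use Becker-coacyclic resolutions, but the core argument is the same as yours.
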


\begin{proof}
 The point is that the assumption of
Lemma~\ref{subcategory-localization-lemma} can be satisfied by
choosing $B^\bu$ to be a suitable complex of injective objects in~$\sK$.
 There are even several ways to do it: e.~g., one can choose $B^\bu$
to be a homotopy injective complex of injective objects, as there are
enough such complexes in any Grothendieck
category~\cite[Theorem~3.13 and Lemma~3.7(ii)]{Ser},
\cite[Corollary~7.1]{Gil}, \cite[Corollary~8.5]{PS4}.
 Alternatively, choosing $B^\bu$ as an arbitrary complex of injectives,
one can make the cone of the morphism $K^\bu\rarrow B^\bu$ not just
an acyclic, but a \emph{coacyclic complex in the sense of Becker},
which is a stronger property~\cite[Corollary~9.5]{PS4}.
\end{proof}

 For specific examples of categories $\sK$ and $\sB$, there are likely 
many further alternative options of choosing a quasi-isomorphism
appearing in the proof of
Proposition~\ref{grothendieck-category-localization}.
 For example, in the case $\sK=X\qcoh$ and $\sB=X\qcoh^\cot$ considered
in the corollary below, one can choose $B^\bu$ as an arbitrary complex
of cotorsion sheaves and make $K^\bu\rarrow B^\bu$ a termwise
monomorphism whose cokernel is an acyclic complex of flat sheaves with
flat sheaves of cocycles.
 This is a quasi-coherent version of~\cite[Theorem~5.3]{BCE} based on
Corollary~\ref{qcoh-cotorsion-periodicity-stated-for-complexes} above.

\begin{cor}
 Let $X$ be a quasi-compact semi-separated scheme.
 Then the inclusion of exact/abelian categories $X\qcoh^\cot\rarrow
X\qcoh$ induces an equivalence of their unbounded derived categories,
$$
 \sD(X\qcoh^\cot) \overset\simeq\lrarrow \sD(X\qcoh).
$$
\end{cor}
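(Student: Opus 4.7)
The plan is to derive the corollary by combining Proposition~\ref{grothendieck-category-localization} with Corollary~\ref{qcoh-cotorsion-periodicity-stated-for-complexes}. Set $\sK=X\qcoh$ and $\sB=X\qcoh^\cot$. The first observation I need is that every injective object of $X\qcoh$ is cotorsion, which is immediate: for any injective quasi-coherent sheaf $\I$ and any flat quasi-coherent sheaf $\F$, we have $\Ext^1_X(\F,\I)=0$. Since $X\qcoh$ is a Grothendieck abelian category, Proposition~\ref{grothendieck-category-localization} then yields a triangulated equivalence
$$
 \frac{\Hot(X\qcoh^\cot)}{\Hot(X\qcoh^\cot)\cap\Ac(X\qcoh)}\overset\simeq\lrarrow\sD(X\qcoh).
$$

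Next I would identify the Verdier quotient on the left with $\sD(X\qcoh^\cot)$. The class $X\qcoh^\cot$ is closed under extensions, kernels of epimorphisms between cotorsion sheaves (a standard consequence of $\Ext^{\ge1}$\+vanishing from Lemma~\ref{flats-hereditary-lemma}), and cokernels of monomorphisms between cotorsion sheaves along short exact sequences whose third term is cotorsion; in particular the full subcategory $X\qcoh^\cot\subset X\qcoh$ inherits an exact category structure, and a complex in $X\qcoh^\cot$ is acyclic in the exact sense precisely when its underlying complex in $X\qcoh$ is acyclic \emph{and} each sheaf of cocycles is cotorsion. Thus the inclusion $\Ac(X\qcoh^\cot)\subset \Hot(X\qcoh^\cot)\cap\Ac(X\qcoh)$ is tautological.

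The heart of the argument is the reverse inclusion, and this is exactly where cotorsion periodicity enters. Let $B^\bu$ be a complex of cotorsion sheaves that is acyclic as a complex in $X\qcoh$. By Corollary~\ref{qcoh-cotorsion-periodicity-stated-for-complexes}, each sheaf of cocycles $Z^n$ is cotorsion, so the short exact sequences $0\to Z^n\to B^n\to Z^{n+1}\to 0$ are admissible short exact sequences in $X\qcoh^\cot$; splicing them back together exhibits $B^\bu$ as acyclic in the exact category $X\qcoh^\cot$. Therefore $\Hot(X\qcoh^\cot)\cap\Ac(X\qcoh)=\Ac(X\qcoh^\cot)$, and the Verdier quotient on the left in the displayed equivalence is $\sD(X\qcoh^\cot)$ by definition. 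This completes the proof.

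The only step that requires real input (rather than formal manipulation) is the identification of acyclic complexes, and it is settled by Corollary~\ref{qcoh-cotorsion-periodicity-stated-for-complexes}; everything else is bookkeeping with the exact structures and Grothendieck category generalities. Hence no further obstacle is expected.
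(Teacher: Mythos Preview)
Your approach is exactly the paper's: apply Proposition~\ref{grothendieck-category-localization} to $\sK=X\qcoh$ and $\sB=X\qcoh^\cot$, then use Corollary~\ref{qcoh-cotorsion-periodicity-stated-for-complexes} to identify $\Hot(X\qcoh^\cot)\cap\Ac(X\qcoh)$ with $\Ac(X\qcoh^\cot)$. One small correction that does not affect the argument: the class $X\qcoh^\cot$ is \emph{not} in general closed under kernels of epimorphisms (the right-hand class of a hereditary cotorsion pair is coresolving, not resolving); fortunately you only need closure under extensions to obtain the inherited exact structure, and the rest of your identification of acyclic complexes goes through as written.
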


\begin{proof}
 Compare Proposition~\ref{grothendieck-category-localization} with
Corollary~\ref{qcoh-cotorsion-periodicity-stated-for-complexes}.
\end{proof}

\bigskip


\begin{thebibliography}{99}
\smallskip

\bibitem{AR}
 J.~Ad\'amek, J.~Rosick\'y.
   Locally presentable  and accessible categories.
London Math.\ Society Lecture Note Series~189,
Cambridge University Press, 1994.

\bibitem{BCE}
 S.~Bazzoni, M.~Cort\'es-Izurdiaga, S.~Estrada.
   Periodic modules and acyclic complexes.
\textit{Algebras and Represent.\ Theory} \textbf{23}, \#5,
p.~1861--1883, 2020.  \texttt{arXiv:1704.06672 [math.RA]}

\bibitem{BHP}
 S.~Bazzoni, M.~Hrbek, L.~Positselski.
   Fp\+projective periodicity.
\textit{Journ.\ of Pure and Appl.\ Algebra} \textbf{228}, \#3,
article ID~107497, 24~pp., 2024.  \texttt{arXiv:2212.02300 [math.CT]}

\bibitem{BG}
 D.~J.~Benson, K.~R.~Goodearl.
   Periodic flat modules, and flat modules for finite groups.
\textit{Pacific Journ.\ of Math.}\ \textbf{196}, \#1, p.~45--67, 2000.

\bibitem{Bueh}
 T.~B\"uhler.
   Exact categories.
\textit{Expositiones Math.}\ \textbf{28}, \#1, p.~1--69, 2010.
\texttt{arXiv:0811.1480 [math.HO]}

\bibitem{CR}
 B.~Chorny, J.~Rosick\'y.
   Class-locally presentable and class-accessible categories.
\textit{Journ.\ of Pure and Appl.\ Algebra} \textbf{216}, \#10,
p.~2113--2125, 2012.  \texttt{arXiv:1110.0605 [math.CT]}

\bibitem{CET}
 L.~W.~Christensen, S.~Estrada, P.~Thompson.
   The stable category of Gorenstein flat sheaves on a noetherian
scheme.
\textit{Proc.\ of the Amer.\ Math.\ Soc.}\ \textbf{149}, \#2,
p.~525--538, 2021.  \texttt{arXiv:1904.07661 [math.AC]}

\bibitem{CH}
 L.~W.~Christensen, H.~Holm.
   The direct limit closure of perfect complexes.
\textit{Journ.\ of Pure and Appl.\ Algebra} \textbf{219}, \#3,
p.~449--463, 2015.  \texttt{arXiv:1301.0731 [math.RA]}

\bibitem{CoFu}
 R.~Colpi, K.~R.~Fuller.
   Tilting objects in abelian categories and quasitilted rings.
\textit{Trans.\ Amer.\ Math.\ Soc.}\ \textbf{359}, \#2, p.~741--765,
2007.

\bibitem{CoSt}
 P.~\v Coupek, J.~\v S\v tov\'\i\v cek.
   Cotilting sheaves on Noetherian schemes.
\textit{Math.\ Zeitschrift} \textbf{296}, \#1--2, p.~275--312, 2020.
\texttt{arXiv:1707.01677 [math.AG]}

\bibitem{CB}
 W.~Crawley-Boevey.
   Locally finitely presented additive categories.
\textit{Communicat.\ in Algebra} \textbf{22}, \#5, p.~1641--1674, 1994.

\bibitem{EP}
 A.~I.~Efimov, L.~Positselski.
   Coherent analogues of matrix factorizations and relative
singularity categories.
\textit{Algebra and Number Theory} \textbf{9}, \#5, p.~1159--1292,
2015.  \texttt{arXiv:1102.0261 [math.CT]}

\bibitem{EE}
 E.~Enochs, S.~Estrada.
   Relative homological algebra in the category of quasi-coherent
sheaves.
\textit{Advances in Math.}\ \textbf{194}, \#2, p.~284--295, 2005.

\bibitem{EFI}
 S.~Estrada, X.~Fu, A.~Iacob.
   Totally acyclic complexes.
\textit{Journ.\ of Algebra} \textbf{470}, p.~300--319, 2017.
\texttt{arXiv:1603.03850 [math.AC]}

\bibitem{EGilO}
 S.~Estrada, J.~Gillespie, S.~Odaba\c si.
   K\+flatness in Grothendieck categories: application to
quasi-coherent sheaves.
\textit{Collectanea Math.}\ (2024), DOI:10.1007/s13348-024-00439-7.
\texttt{arXiv:2306.04816 [math.AG]}  {\hbadness=1500\par}

\bibitem{EGO}
 S.~Estrada, P.~A.~Guil Asensio, S.~Odaba\c si.
   A Lazard-like theorem for quasi-coherent sheaves.
\textit{Algebras and Represent.\ Theory} \textbf{16}, \#4,
p.~1193--1205, 2013.  \texttt{arXiv:1109.0439 [math.AG]}

\bibitem{GR}
 J.~R.~Garc\'\i a Rozas.
   Covers and envelopes in the category of complexes of modules.
Chapman \& Hall/CRC Research Notes in Math., 407, Boca Raton, FL, 1999.

\bibitem{Gil}
 J.~Gillespie.
   Kaplansky classes and derived categories.
\textit{Math.\ Zeitschrift} \textbf{257}, \#4, p.~811--843, 2007.

\bibitem{GT}
 R.~G\"obel, J.~Trlifaj.
   Approximations and endomorphism algebras of modules.
Second Revised and Extended Edition.
De Gruyter Expositions in Mathematics 41,
De Gruyter, Berlin--Boston, 2012.

\bibitem{Gov}
 V.~E.~Govorov.
   On flat modules (Russian).
\textit{Sibir.\ Mat.\ Zh.}\ \textbf{6}, p.~300--304, 1965.

\bibitem{EGA1}
 A.~Grothendieck, J.~A.~Dieudonn\'e.
   \'El\'ements de g\'eom\'etrie alg\'ebrique.~I.
Grundlehren der mathematischen Wissenschaften, 166.
Springer-Verlag, Berlin--Heidelberg--New York, 1971.

\bibitem{Har}
 R.~Hartshorne.
   Algebraic geometry.
Grad.\ Texts in Math., 52, Springer-Verlag, New York--Heidelberg, 1977.

\bibitem{SP}
 A.~J.~de~Jong et al.
   The Stacks Project.
Available from \texttt{https://stacks.math.columbia.edu/}

\bibitem{KS}
 M.~Kashiwara, P.~Schapira.
   Categories and sheaves.
Grundlehren der mathematischen Wissenschaften, 332, Springer, 2006.

\bibitem{Laz}
 D.~Lazard.
   Autour de la platitude.
\textit{Bull.\ Soc.\ Math.\ France} \textbf{97}, p.~81--128, 1969.

\bibitem{M-n}
 D.~Murfet.
   Derived categories of quasi-coherent sheaves.
Notes, October~2006.
Available from \texttt{http://www.therisingsea.org/notes}

\bibitem{Neem}
 A.~Neeman.
   The homotopy category of flat modules, and Grothendieck duality.
\textit{Inventiones Math.}\ \textbf{174}, \#2, p.~255--308, 2008.

\bibitem{Pe}
 A.~Perry.
   Faithfully flat descent for projectivity of modules.
Electronic preprint \texttt{arXiv:1011.0038 [math.AC]}.
{\hbadness=5800\par}

\bibitem{Pkoszul}
 L.~Positselski.
   Two kinds of derived categories, Koszul duality, and
comodule-contramodule correspondence.
\textit{Memoirs of the American Math.\ Society} \textbf{212},
\#996, 2011.  vi+133~pp.  \texttt{arXiv:0905.2621 [math.CT]}

\bibitem{Pcosh}
 L.~Positselski.
   Contraherent cosheaves on schemes.
Electronic preprint \texttt{arXiv:1209.2995 [math.CT]}.

\bibitem{Pedg}
 L.~Positselski.
   Exact DG\+categories and fully faithful triangulated inclusion
functors.
Electronic preprint \texttt{arXiv:2110.08237 [math.CT]}.

\bibitem{Pflcc}
 L.~Positselski.
   Flat comodules and contramodules as directed colimits, and cotorsion
periodicity.
\textit{Journ.\ of Homotopy and Related Struct.}\ \textbf{19}, \#4,
p.~635--678, 2024.  \texttt{arXiv:2306.02734 [math.RA]}

\bibitem{Pacc}
 L.~Positselski.
   Notes on limits of accessible categories.
\textit{Cahiers de topol.\ et g\'eom.\ diff\'er.\ cat\'egoriques}
\textbf{LXV}, \#4, p.~390--437, 2024.
\texttt{arXiv:2310.16773 [math.CT]}

\bibitem{Plce}
 L.~Positselski.
   Locally coherent exact categories.
\textit{Appl.\ Categorical Struct.}\ \textbf{32}, \#4, article no.~20,
30~pp., 2024.  \texttt{arXiv:2311.02418 [math.CT]}

\bibitem{Pphil}
 L.~Positselski.
   Philosophy of contraherent cosheaves.
Electronic preprint \texttt{arXiv:2311.14179 [math.AG]}.

\bibitem{PR}
 L.~Positselski, J.~Rosick\'y.
   Covers, envelopes, and cotorsion theories in locally presentable
abelian categories and contramodule categories.
\textit{Journ.\ of Algebra} \textbf{483}, p.~83--128, 2017.
\texttt{arXiv:1512.08119 [math.CT]}

\bibitem{PS4}
 L.~Positselski, J.~\v S\v tov\'\i\v cek.
   Derived, coderived, and contraderived categories of locally
presentable abelian categories.
\textit{Journ.\ of Pure and Appl.\ Algebra} \textbf{226}, \#4,
article ID~106883, 2022, 39~pp.  \texttt{arXiv:2101.10797 [math.CT]}

\bibitem{PS5}
 L.~Positselski, J.~\v S\v tov\'\i\v cek.
   Coderived and contraderived categories of locally presentable
abelian DG\+categories.
\textit{Math.\ Zeitschrift} \textbf{308}, \#1, article no.~14, 70~pp.,
2024.  \texttt{arXiv:2210.08237 [math.CT]}

\bibitem{RR}
 G.~Raptis, J.~Rosick\'y.
   The accessibility rank of weak equivalences.
\textit{Theory and Appl.\ of Categories} \textbf{30}, no.~19,
p.~687--703, 2015.  \texttt{arXiv:1403.3042 [math.AT]}

\bibitem{RG}
 M.~Raynaud, L.~Gruson.
   Crit\`eres de platitude et de projectivit\'e: Techniques
de ``platification'' d'un module.
\textit{Inventiones Math.} \textbf{13}, \#1--2, p.~1--89, 1971.

\bibitem{SaoSt}
 M.~Saor\'\i n, J.~\v S\v tov\'\i\v cek.
   On exact categories and applications to triangulated adjoints
and model structures.
\textit{Advances in Math.}\ \textbf{228}, \#2, p.~968--1007, 2011.
\texttt{arXiv:1005.3248 [math.CT]}

\bibitem{Ser}
 C.~Serp\'e.
   Resolution of unbounded complexes in Grothendieck categories.
\textit{Journ.\ of Pure and Appl.\ Algebra} \textbf{177}, \#1,
p.~103--112, 2003.

\bibitem{SS}
 A.~Sl\'avik, J.~\v S\v tov\'\i\v cek.
   On flat generators and Matlis duality for quasicoherent sheaves.
\textit{Bulletin of the London Math.\ Soc.}\ \textbf{53}, \#1,
p.~63--74, 2021.  \texttt{arXiv:1902.05740 [math.AG]}

\bibitem{Spal}
 N.~Spaltenstein.
   Resolutions of unbounded complexes.
\textit{Compositio Math.}\ \textbf{65}, \#2, p.121--154, 1988.

\bibitem{Sto-ICRA}
 J.~\v S\v tov\'\i\v cek.
   Exact model categories, approximation theory, and cohomology of
quasi-coherent sheaves.
\textit{Advances in representation theory of algebras}, p.~297--367,
EMS Ser.\ Congr.\ Rep., Eur.\ Math.\ Soc., Z\" urich, 2013.
\texttt{arXiv:1301.5206 [math.CT]}

\bibitem{Sto}
 J.~\v S\v tov\'\i\v cek.
   Derived equivalences induced by big cotilting modules. 
\textit{Advances in Math.}\ \textbf{263}, p.~45--87, 2014.
\texttt{arXiv:1308.1804 [math.CT]}

\bibitem{Sto2}
 J.~\v S\v tov\'\i\v cek.
   On purity and applications to coderived and singularity categories.
Electronic preprint \texttt{arXiv:1412.1615 [math.CT]}.

\bibitem{To}
 B.~Totaro.
   The resolution property for schemes and stacks.
\textit{Journ.\ f\"ur die reine und angew.\ Mathematik}
\textbf{577}, p.~1--22, 2004.  \texttt{arXiv:math.AG/0207210}

\bibitem{Ulm}
 F.~Ulmer.
   Bialgebras in locally presentable categories.
Preprint, University of Wuppertal, Summer~1977.
Available from \texttt{https://math.cas.cz/\textasciitilde positselski}
or \texttt{https://ncatlab.
org/nlab/files/Bialgebras\_in\_locally\_presentable\_categories.pdf}

\end{thebibliography}
\end{document}